\newtheorem{theorem}{Theorem}[section]
\newtheorem*{theorem*}{Theorem}
\newtheorem{lemma}[theorem]{Lemma}
\newtheorem{proposition}[theorem]{Proposition}
\newtheorem{corollary}[theorem]{Corollary}
\theoremstyle{definition}
\newtheorem{definition}[theorem]{Definition}
\newtheorem{remark}[theorem]{Remark}
\newcommand{\pcoor}[1]{%
  \begingroup\lccode`~=`: \lowercase{\endgroup
  \edef~}{\mathbin{\mathchar\the\mathcode`:}\nobreak}%
  [% opening symbol
  \begingroup
  \mathcode`:=\string"8000
  #1%
  \endgroup 
  ]% closing symbol
} % homogeneous coordinate
\newcommand{\tens}[1]{%
  \mathbin{\mathop{\otimes}\displaylimits_{#1}}%
} % tensor product (display mode)
\newcommand{\sh}[1]{\mathcal{#1}}
\newcommand{\msp}[1]{{#1}}
\newcommand{\orbi}[1]{\mathcal{#1}}
\def\Spec{\mathrm{Spec}}
\def\Hom{\mathrm{Hom}}
\def\GL{\mathrm{GL}}
\def\vir{\mathrm{vir}}
\def\Hilb{\mathrm{Hilb}}
\def\Proj{\mathrm{Proj}}
\def\Coh{\mathrm{Coh}}
\def\Mod{\mathrm{Mod}}
\def\Sym{\mathrm{Sym}}
\def\Ext{\mathrm{Ext}}
\def\Jac{\mathrm{Jac}}
\def\Tr{\mathrm{Tr}}
\def\Quot{\mathrm{Quot}}
\def\supp{\mathrm{supp}}
\def\End{\mathrm{End}}
\def\ConSch{\mathcal{C}}
\def\ss{\mathrm{ss}}
\def\fd{\mathrm{fd}}
\def\simple{\mathrm{sp}}
\def\tt{\bm{t}}
\def\dd{\mathbf{d}}
\def\ff{\bm{f}}
\def\ww{\mathbf{w}}
\def\PL{\mathrm{PL}}
\newcommand{\sslash}{\mathbin{/\mkern-6mu/}}
\def\planep{\mathcal{P}} % set of plane partitions
\title{Donaldson--Thomas theory of quantum Fermat quintic threefolds II}
\author{Yu-Hsiang Liu}
\address{Department of Mathematics, University of British Columbia}
\email{\href{mailto:yuliu@math.ubc.ca}{yuliu@math.ubc.ca}}
\thanks{} 
\begin{document}

\begin{abstract} 
This paper is a continuation of author's previous work, where we defined Donaldson--Thomas invariants of quantum Fermat threefolds. In this paper, we study the generic quantum Fermat threefold. We give explicit local models for Hilbert schemes of points as quivers with potential, and compute degree zero Donaldson--Thomas invariants. The result is expressed in terms of certain colored plane partitions.
\end{abstract}

\maketitle

\setcounter{tocdepth}{1}

\tableofcontents

\section{Introduction}

Donaldson--Thomas (DT) invariants, first introduced by Thomas \cite{Tho00}, are integer-valued deformation invariants on a compact Calabi--Yau $3$-fold. It provides a virtual count of curves embedded into a Calabi--Yau $3$-fold, and is conjecturally equivalent to other enumerative invariants, such as Gromov--Witten invariants \cite{MNOP1}, Pandharipande--Thomas invariants \cite{PT09}, and Gopakumar--Vafa invariants \cite{MT18}. All of which are closely related to the BPS invariants, which are motivated by counting BPS states in the string theory.

A quantum Fermat quintic threefold, introduced in \cite{Kan15}, is the Fermat quintic hypersurface in a quantum projective $4$-space, using the language of non-commutative projective schemes developed by Artin and Zhang \cite{AZ94}. DT invariants of a quantum Fermat quintic threefold are defined in the author's previous work \cite{Liu19} as follows. Any quantum Fermat quintic threefold is represented by a coherent sheaf $\sh{A}$ of non-commutative $\sh{O}_X$-algebras on $X\cong\mathbb{P}^3$. We consider Simpson's Hilbert schemes $\Hilb^h(\sh{A})$ parameterizing $\sh{A}$-module quotients $\sh{A}\to\sh{F}$. It is shown that if $\deg(h)\leq 1$, then $\Hilb^h(\sh{A})$ admits a symmetric obstruction theory, thus carries a virtual fundamental class $[\Hilb^h(\sh{A})]_{\vir}$ of virtual dimension $0$. DT invariants are defined as
\[
\int_{[\Hilb^h(\sh{A})]_{\vir}} 1,
\]
which equals to the Euler characteristic $\chi\big(\Hilb^h(\sh{A}),\nu_{\Hilb^h(\sh{A})}\big)$ weighted by the Behrend function \cite{Beh09}. We will only consider constant Hilbert polynomials, and write
\[
Z^{\sh{A}}(t)=\sum_{n=0}^{\infty}\chi\big(\Hilb^n(\sh{A}),\nu_{\Hilb^n(\sh{A})}\big)\,t^n
\]
for the generating function of degree zero DT invariants.

\vspace{1mm}

In this paper, we will focus on the generic quantum Fermat quintic threefold \cite[\textsection 2.3]{Liu19}. We first give explicit local models of the generic quantum Fermat quintic threefold $(X,\sh{A})$.

\begin{theorem}[= Theorem~\ref{thm:local-model}]
There exist a stratification $X=X_{(0)}\coprod\ldots\coprod X_{(3)}$ of $X$ and coherent sheaves $\sh{J}_{(i)}$ of non-commutative algebras on $\mathbb{C}^3$ such that for any point $p\in X_{(i)}$, there is an \emph{analytic} local chart $U\to\mathbb{C}^3$ of $p$ with a (non-unique) isomorphism
\[
\sh{A}|_{U}\cong\sh{J}_{(i)}|_{U}
\]
of sheaves of non-commutative algebras. These sheaves $\sh{J}_{(i)}$'s of algebras are (up to Morita equivalence) Jacobi algebras of quivers with potential.
\end{theorem}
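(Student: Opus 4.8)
The plan is to localize the explicit description of $(X,\sh{A})$ provided in \cite[\textsection 2.3]{Liu19}. Recall that $X\cong\mathbb{P}^3$ and that $\sh{A}$ is obtained by sheafifying the quantum homogeneous coordinate ring $S$ of the generic quantum Fermat quintic over the central subalgebra generated by the classes $u_i$ of the $x_i^5$, so that $X$ sits inside $\mathbb{P}^4=\Proj\mathbb{C}[u_0,\dots,u_4]$ cut out by the Fermat equation $\sum_i u_i=0$, and $\sh{A}$ is a coherent sheaf of $\sh{O}_X$-orders that is Azumaya over a dense open and degenerates only along the arrangement of the five ``coordinate'' divisors $H_i=\{u_i=0\}\cap X$. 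Since the functions $u_i|_X$ satisfy only the relation $\sum u_i=0$, the $H_i$ are five hyperplanes of $\mathbb{P}^3$ in general position, and I would take the stratification to be the stratification by flats of this arrangement: $X_{(i)}$ is the union of the $i$-fold intersections $H_{j_1}\cap\dots\cap H_{j_i}$ with the deeper intersections removed, for $i=0,1,2,3$. Thus $X_{(0)}$ is the Azumaya locus, $X_{(1)}$ and $X_{(2)}$ are smooth locally closed of codimension $1$ and $2$, and $X_{(3)}$ is the finite set of $\binom{5}{3}=10$ triple points; there is no codimension-$4$ stratum, because four of the $H_i$ together with $\sum u_i=0$ have empty common zero locus. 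The theorem then reduces to an analytic-local statement near a point of each $X_{(i)}$.

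Next I would carry out the local computation. In an affine chart of $\mathbb{P}^3\subset\mathbb{P}^4$ one dehomogenizes $S$; setting a variable equal to $1$ requires choosing a fifth root, which is the source of the analyticity and non-uniqueness of the local chart, and presents $\sh{A}$ locally as a four-variable quantum affine algebra $\mathbb{C}_{\dvec q}[z_1,z_2,z_3,z_4]$, the $q_{ab}$ being fifth roots of unity, modulo the single central Fermat relation, regarded as a finite module over its central subring $\mathbb{C}[z_1^5,z_2^5,z_3^5]\cong\mathbb{C}^3$. This presentation is translation-invariant along the torus orbits of $\mathbb{C}^3$, and $X$ meets every coordinate torus orbit of $\mathbb{P}^4$ transversally, so the analytic-local isomorphism type of $\sh{A}$ is constant on each stratum; genericity of the $q_{ab}$ enters to ensure that the associated alternating form is as non-degenerate as possible, so that along a codimension-$i$ flat exactly $i$ of the directions $z_a^5$ vanish near the point while the remaining central generators are units. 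Localizing $\mathbb{C}_{\dvec q}[z]$ at such a central unit and extracting a fifth root of it analytically replaces that direction by a crossed product with $\mathbb{Z}/5$; performing this reduction in all the unit directions exhibits, up to Morita equivalence, the local model $\sh{J}_{(i)}$ on $\mathbb{C}^3$ as a skew group algebra $\mathbb{C}[y_1,y_2,y_3]\rtimes G_{(i)}$ --- possibly twisted by a cocycle coming from the residual non-commutativity among the vanishing directions --- for a finite abelian group $G_{(i)}$ acting linearly with weights read off from the surviving fifth-root parameters; in particular $G_{(0)}$ is trivial.

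The last step is to read off the quivers with potential. For $i=0$ the model $\mathbb{C}[y_1,y_2,y_3]$ is the Jacobi algebra of the one-vertex quiver with three loops and potential $W=y_1y_2y_3-y_1y_3y_2$. For $i\geq1$ I would invoke the standard identification of the skew group algebra $\mathbb{C}[y_1,y_2,y_3]\rtimes G_{(i)}$ (and its twisted variants) with the Jacobi algebra of the corresponding McKay quiver --- the brane tiling / dimer model of the toric orbifold $\mathbb{C}^3/G_{(i)}$ --- verifying that the cyclic derivatives of the dimer potential recover the defining relations. The vertices of these quivers are indexed by the characters of $G_{(i)}$, and it is precisely this labelling that will make the later degree-zero DT count of $\Hilb^n(\sh{A})$ a sum over plane partitions colored by $\widehat{G_{(i)}}$. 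Assembling the four pairs $(X_{(i)},\sh{J}_{(i)})$ gives the theorem.

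I expect the main obstacle to be this last step for $i=1,2,3$: determining the groups $G_{(i)}$, their linear weights, and the twisting cocycles explicitly in terms of the quantum parameters, writing down the corresponding McKay quivers and dimer potentials, and checking that the Jacobi relations match the skew-group relations. The uniformity statement of the second step --- that the analytic-local model depends only on the codimension of the flat, which is where the genericity hypothesis is indispensable --- is the other delicate point; the transversality of $X$ to the torus stratification, the analytic splitting of quantum tori via fifth roots, and the analytic solution of the linear Fermat equation are all comparatively routine.
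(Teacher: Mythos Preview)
Your overall plan—dehomogenize, extract fifth roots of the unit central variables analytically, and identify the result with Jacobi/skew-group algebras—is essentially the paper's, and your stratification by the arrangement $\{u_i=0\}$ is the paper's up to reversed indexing (your $X_{(i)}$ is the paper's $X_{(3-i)}$, so the paper's $X_{(0)}$ is your set of ten triple points and the paper's $X_{(3)}$ is your Azumaya locus).

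The substantive discrepancy is in the local models themselves. You assert that on the Azumaya locus $G_{(0)}$ is trivial and the model is (Morita equivalent to) a single $\mathbb{C}[y_1,y_2,y_3]$, with the nontrivial McKay quivers appearing on the deeper strata. The paper finds a different pattern. The connected five-vertex quiver $(Q,W)$—which \emph{is} a McKay quiver, namely of $\mu_5(1,1,3)$, with $\Jac(Q,W)\cong\mathbb{C}\langle e,u,v,w\rangle_{(\overline q_{ij})}/(e^5-1)$—is the local model only at the ten deepest points. On the shallower strata the model is, up to Morita equivalence, $\mathbb{C}[x,y,z]^{\oplus 5^{i-1}}$ for $i=1,2,3$: not a single copy but $1$, $5$, $25$ disjoint copies of the one-vertex three-loop quiver. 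In particular the generic fibre of $\sh{A}$ has $25$ simple modules, not one; this multiplicity is exactly what produces the exponent in $M(-t^5)^{-50}$ later, and your model would miss it. Concretely, after one fifth-root extraction the two resulting order-five elements $e_1,e_2$ with $e_1e_2=q^3e_2e_1$ generate $M_5(\mathbb{C})$, not a Morita-trivial algebra (Proposition~\ref{prop:local-of-J}); the remaining variables then become \emph{central}, and further fifth-root extractions contribute $\oplus\,5$ direct-sum factors rather than enlarging the crossed-product group. So ``each unit direction contributes a $\mathbb{Z}/5$ to $G_{(i)}$'' is not what happens.

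The paper also organizes the argument more efficiently than your stratum-by-stratum outline: it first proves once that $\sh{A}|_U\cong\sh{J}|_U$ on any analytic $U\subset U_{01}$ admitting a fifth root of $-1-x-y-z$, where $\sh{J}$ is the sheaf on $\mathbb{C}^3$ associated to $\Jac(Q,W)$, and only then simplifies $\sh{J}$ along the coordinate strata of $\mathbb{C}^3$ via the single change of variables of Proposition~\ref{prop:local-of-J}. This collapses the ``main obstacle'' you anticipate (determining $G_{(i)}$, weights, cocycles, and matching Jacobi relations) to one $5\times 5$ matrix computation.
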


More specifically, $\sh{J}_{(i)}$'s are (up to Morita equivalence) just copies of $\mathbb{C}[x,y,z]$  for $i\neq 0$, whose DT invariants are well-studied. The Jacobi algebra $\sh{J}_{(0)}$ is defined by the quiver $Q$
\[
\begin{xy} 0;<2pt,0pt>:<0pt,-2pt>:: 
(40,16) *+{\bullet} ="0",
(8,40) *+{\bullet} ="2",
(32,40) *+{\bullet} ="1",
(0,16) *+{\bullet} ="3",
(20,0) *+{\bullet} ="4",
(20,20) *+<2pt>{},
"4":{\ar@<-.5ex>@/^/"0"},
"4":{\ar@<.5ex>@/^/"0"},
"4":{\ar"2"},
"0":{\ar@<-.5ex>@/^/"1"},
"0":{\ar@<.5ex>@/^/"1"},
"0":{\ar"3"},
"1":{\ar@<-.5ex>@/^/"2"},
"1":{\ar@<.5ex>@/^/"2"},
"1":{\ar"4"},
"2":{\ar@<-.5ex>@/^/"3"},
"2":{\ar@<.5ex>@/^/"3"},
"2":{\ar"0"},
"3":{\ar@<-.5ex>@/^/"4"},
"3":{\ar@<.5ex>@/^/"4"},
"3":{\ar"1"},
\end{xy}
\]
with certain potential $W$.

\vspace{1mm}

These local models $\sh{J}_{(i)}$'s allows us to stratify Hilbert schemes $\Hilb^n(\sh{A})$ of points. The generating function $Z^{\sh{A}}(t)$ can be expressed in terms of Hilbert schemes $\Hilb^n(\sh{J}_{(i)})$.

\begin{theorem}[= Theorem~\ref{thm:fibration}]
We have
\[
Z^{\sh{A}}(t)=\prod_{i=0}^3\left(\sum_{n=0}^{\infty}\chi\big(\Hilb^n(\sh{J}_{(i)})_0,\nu_{\Hilb^n(\sh{J}_{(i)})}\big)\,t^n \right),
\]
where $\Hilb^n(\sh{J}_{(i)})_0$ can be regarded as an analogue of punctual Hilbert scheme of points.
\end{theorem}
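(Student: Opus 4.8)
The plan is to stratify $\Hilb^n(\sh{A})$ by the support of the parameterized quotient, feed in the local models of Theorem~\ref{thm:local-model}, and use that the Behrend--weighted Euler characteristic is additive over decompositions into locally closed pieces and multiplicative over products. Fix $[\sh{A}\twoheadrightarrow\sh{F}]\in\Hilb^n(\sh{A})$; since $\sh{F}$ has finite length and $\sh{A}$ is a coherent sheaf of $\sh{O}_X$-algebras, $\sh{F}$ has a well-defined $\sh{O}_X$-support cycle, and, the support being a finite set of distinct points, $\sh{F}$ splits canonically into the direct sum of the parts $\sh{F}_p$ supported at each point $p$; by comaximality the kernel of $\sh{A}\to\sh{F}$ is the intersection of the kernels of the $\sh{A}\to\sh{F}_p$. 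Using the support-cycle morphism $\Hilb^n(\sh{A})\to\Sym^n X$ and the stratification $X=\coprod_i X_{(i)}$, I would group the support by which stratum it lies in and by the length $n_i$ of the part supported in $X_{(i)}$, obtaining a decomposition into locally closed subschemes
\[
\Hilb^n(\sh{A})=\coprod_{n_0+\cdots+n_3=n}\ \prod_{i=0}^{3}\Hilb^{n_i}(\sh{A})_{X_{(i)}},
\]
where $\Hilb^m(\sh{A})_{X_{(i)}}\subseteq\Hilb^m(\sh{A})$ denotes the locally closed locus of quotients supported on a finite subset of $X_{(i)}$. Since a quotient whose support lies in disjoint open subsets of $X$ deforms as a product of quotients supported near each piece of the support, $\Hilb^n(\sh{A})$ is analytically locally, near such a product stratum, a product of open subschemes of the $\Hilb^{n_i}(\sh{A})$; hence by multiplicativity of the Behrend function the restriction of $\nu_{\Hilb^n(\sh{A})}$ to $\prod_i\Hilb^{n_i}(\sh{A})_{X_{(i)}}$ is the exterior product of the restrictions of the $\nu_{\Hilb^{n_i}(\sh{A})}$. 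Consequently $Z^{\sh{A}}(t)=\prod_{i=0}^{3}z_i(t)$, where $z_i(t)=\sum_{m\ge0}\chi\big(\Hilb^m(\sh{A})_{X_{(i)}},\nu_{\Hilb^m(\sh{A})}\big)\,t^m$.

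The second step is to compute each $z_i(t)$ from the local model $\sh{J}_{(i)}$. I would stratify $\Hilb^m(\sh{A})_{X_{(i)}}$ by the \emph{colored support type} of a quotient --- the multiset of its support points in $X_{(i)}$, each carrying the length of the quotient there. On a fixed stratum the quotient splits as before, so the stratum maps to the corresponding configuration space of distinct multiplicity-labeled points of $X_{(i)}$, and two facts control this map. First, a quotient supported set-theoretically at a single point $p$ is scheme-theoretically supported on an infinitesimal neighborhood of $p$, hence lies inside every analytic chart around $p$; combining this with the isomorphism $\sh{A}|_U\cong\sh{J}_{(i)}|_U$ of Theorem~\ref{thm:local-model} --- and using for $i\ne0$ that $\sh{J}_{(i)}$ is translation-homogeneous and for $i=0$ that $X_{(0)}$ is a single point --- identifies the punctual Hilbert scheme $\Hilb^a(\sh{A})_p$ with $\Hilb^a(\sh{J}_{(i)})_0$, compatibly with Behrend functions, the Behrend function of a scheme being an analytic-local invariant. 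Second, trivializing over these charts exhibits the map as an analytically locally trivial fibration with fibre $\prod_j\Hilb^{a_j}(\sh{J}_{(i)})_0$, on which $\nu_{\Hilb^m(\sh{A})}$ restricts to the pullback of the exterior product of the Behrend functions of the punctual factors. Since for such a fibration, weighted by a function pulled back from the fibre, $\chi$ factors as $\chi(\text{base})\cdot\chi(\text{fibre},-)$, and since summing the contributions of all colored support types is governed by the motivic exponential formula, exactly as in Behrend--Fantechi's computation of degree-zero DT invariants of smooth threefolds, we obtain
\[
z_i(t)=\Big(\sum_{m\ge0}\chi\big(\Hilb^m(\sh{J}_{(i)})_0,\nu_{\Hilb^m(\sh{J}_{(i)})}\big)\,t^m\Big)^{\chi(X_{(i)})}.
\]
From the explicit stratification one reads off that $\chi(X_{(i)})=1$ for each $i$ --- equivalently, a configuration with two or more distinct support points inside a single stratum carries Behrend-weighted Euler characteristic zero --- so $z_i(t)=\sum_{m\ge0}\chi\big(\Hilb^m(\sh{J}_{(i)})_0,\nu_{\Hilb^m(\sh{J}_{(i)})}\big)\,t^m$, and multiplying over $i=0,\dots,3$ gives the theorem.

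The main obstacle is the Behrend-function bookkeeping underlying both steps: verifying that $\Hilb^n(\sh{A})$ is analytically locally a product along the support decomposition, that the colored-support fibrations are analytically locally trivial so that the local models may be inserted, and that the Behrend function is genuinely multiplicative --- in particular that the configuration-moving directions contribute no spurious sign, which is precisely the point that makes each factor appear with exponent $\chi(X_{(i)})$. This is the technical heart of the argument and is handled exactly as for Hilbert schemes of points on smooth threefolds; the only new features are that the local model is the Jacobi algebra of a quiver with potential --- so the relevant moduli are critical loci, for which multiplicativity of the Behrend function is standard --- and that the identifications are merely analytic, neither of which causes trouble since everything involved is a local analytic invariant.
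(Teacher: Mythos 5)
Your strategy matches the paper's almost step for step: stratify $\Hilb^n(\sh{A})$ by the distribution of the support over the strata $X_{(i)}$ (this is the paper's Proposition~\ref{prop:hilb_strata_sup}), identify punctual Hilbert schemes with $\Hilb^m(\sh{J}_{(i)})_0$ via the analytic local models and deduce an analytic-local fibration over each stratum (this is Theorem~\ref{thm:fibration}), and then invoke the exponential/power formula for weighted Euler characteristics of a stratum-by-stratum Hilbert--Chow fibration (Proposition~\ref{prop:local-generating}). Up to the equation
\[
z_i(t)=\Big(\sum_{m\ge0}\chi\big(\Hilb^m(\sh{J}_{(i)})_0,\nu_{\Hilb^m(\sh{J}_{(i)})}\big)\,t^m\Big)^{\chi(X_{(i)})},
\]
you are doing exactly what the paper does.

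The final step is wrong, though, and it is a genuine error rather than a shortcut. You assert that $\chi(X_{(i)})=1$ for each $i$, and give as a purported equivalent that configurations with two or more distinct support points inside a single stratum contribute zero to the weighted Euler characteristic. Neither claim holds. From the stratification of $X\cong\mathbb{P}^3$ by the number of nonvanishing homogeneous coordinates one computes $\chi(X_{(0)})=10$ (the ten points of the form $p_{01}$), $\chi(X_{(1)})=-10$, $\chi(X_{(2)})=5$, and $\chi(X_{(3)})=-1$. (Sanity check: $10-10+5-1=4=\chi(\mathbb{P}^3)$, and $\chi(X_{(1)})+5\chi(X_{(2)})+25\chi(X_{(3)})=-10$, which is the exponent that produces $M(-t^5)^{-50}$ in Theorem~\ref{thm:main-computation}.) Moreover the supposed ``equivalent'' misreads the power-series formula: the configuration strata with several distinct support points are precisely what the exponent $\chi(X_{(i)})$ aggregates, via the standard power-structure/exponential argument of Behrend--Fantechi; they are emphatically not killed. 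The correct conclusion is
\[
Z^{\sh{A}}(t)=\prod_{i=0}^3\Big(\sum_{m\ge0}\chi\big(\Hilb^m(\sh{J}_{(i)})_0,\nu_{\Hilb^m(\sh{J}_{(i)})}\big)\,t^m\Big)^{\chi(X_{(i)})},
\]
and the introductory restatement you were handed suppresses the exponents $\chi(X_{(i)})$; these are made explicit in the paper's proof of Theorem~\ref{thm:main-computation} and cannot be dropped. (A small additional slip: you write ``multiplying over $i=0,\dots,4$'' but there are only four strata $i=0,\dots,3$.)
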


Combing with the known result on DT invariants of $\mathbb{C}^3$, it eventually leads to
\[
Z^{\sh{A}}(t)=\Big(Z^{Q,W}(t)\Big)^{10}\Big(M(-t^5)\Big)^{-50},
\]
where $Z^{Q,W}(t)$ is the generating function of DT invariants of the quiver $(Q,W)$ with potential, and $M(t)$ is the MacMahon function.

\vspace{1mm}

Finally, we turn to the computation of the DT invariants $Z^{Q,W}(t)$. We observe that the quiver $Q$ is the McKay quiver of the $\mu_5$-action on $\mathbb{C}^3$ with weight $(1,1,3)$. It associates an orbifold $[\mathbb{C}^3/\mu_5]$, and DT invariants on $[\mathbb{C}^3/\mu_5]$ were computed in \cite{BCY} using the notion of colored plane partitions \cite{You10}. However, our DT invariants $Z^{Q,W}$ use a different framing vector (stability condition). We introduce the notion of \emph{$Q$-multi-colored plane partitions} associated to a quiver $Q$. Each $Q$-multi-colored plane partition associates a dimension vector $\dd$ of $Q$, and we denote by $n_{\dd}(Q)$ the number of $Q$-multi-colored plane partitions with dimension vector $\dd$.

\begin{theorem}[= Theorem~\ref{thm:DT-to-part}]
We have
\[
Z^{Q,W}(t)=\sum_{n=1}^{\infty}\left(\sum_{|\dd|=n}(-1)^{|\dd|+\langle \dd,\dd\rangle_Q}n_{\dd}(Q)\right)t^n,
\]
\end{theorem}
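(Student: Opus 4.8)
The plan is to realize $Z^{Q,W}(t)$ as a generating series of Behrend-weighted Euler characteristics of framed quiver moduli and to evaluate it by torus localization. Write $\M_{\dd}$ for the moduli space of $\zeta$-stable $\ff$-framed representations of $(Q,W)$ of dimension vector $\dd$, where $\ff=(1,\dots,1)$ is the framing vector that frames once at every vertex; by construction $Z^{Q,W}(t)=\sum_{\dd}\chi(\M_{\dd},\nu_{\M_{\dd}})\,t^{|\dd|}$. Each $\M_{\dd}$ is the critical locus $\crit(\Tr W)$ of the potential function on the smooth open substack of $\zeta$-stable points of $[R_{\dd}/\GL_{\dd}]$, where $R_{\dd}$ is the affine space of $\ff$-framed representations of the underlying quiver; hence $\M_{\dd}$ carries a symmetric obstruction theory and $\nu_{\M_{\dd}}$ is defined. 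Since $Q$ is the McKay quiver of $\mu_{5}$ acting on $\mathbb{C}^{3}$ with weight $(1,1,3)$, there is a subtorus $T\subset(\mathbb{C}^{*})^{3}$ preserving $W$ (the Calabi--Yau torus); letting it act trivially on the framing, it acts on every $\M_{\dd}$, and by $T$-localization for the Behrend function (using that the symmetric obstruction theory is $T$-equivariant) one gets $\chi(\M_{\dd},\nu_{\M_{\dd}})=\sum_{\pi\in\M_{\dd}^{T}}\nu_{\M_{\dd}}(\pi)$ once we know that $\M_{\dd}^{T}$ is finite.

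The next step is to classify the $T$-fixed locus. A $T$-fixed point of $\M_{\dd}$ is a $\zeta$-stable $\ff$-framed representation that is isomorphic to each of its $T$-twists; choosing a compatible grading, this is the same as a $T$-invariant cyclic submodule of the free module generated at the framed vertices, and such submodules are controlled by monomial-type combinatorial data exactly as in Young's analysis of $\Hilb^{n}(\mathbb{C}^{3})$ and of the orbifold case \cite{You10}. With the present framing these data are precisely the $Q$-multi-colored plane partitions $\pi$, and the dimension vector of the associated representation is the colored box-count vector $\dd(\pi)$. One then verifies that $\M_{\dd}^{T}$ is a finite disjoint union of points, one for each $Q$-multi-colored plane partition $\pi$ with $\dd(\pi)=\dd$ (in general these points are non-reduced, which is why their Behrend multiplicity need not be $1$), so that $\chi(\M_{\dd},\nu_{\M_{\dd}})=\sum_{\dd(\pi)=\dd}\nu_{\M_{\dd}}(\pi)$.

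It remains to evaluate $\nu_{\M_{\dd}}(\pi)$. Locally near $\pi$ the space $\M_{\dd}$ is the critical locus of a $T$-equivariant function on a smooth $T$-variety of dimension $e(\dd):=\dim_{\mathbb{C}}R_{\dd}-\dim_{\mathbb{C}}\GL_{\dd}$, and the $3$-Calabi--Yau property of $(Q,W)$ forces the virtual tangent $T$-representation $T^{\vir}_{\pi}$ to be invariant under inversion of the $T$-weights. Feeding this symmetry into Behrend's formula for $\nu$ of a critical locus in terms of the Milnor fibre, the moving-weight contributions cancel in pairs and one is left with $\nu_{\M_{\dd}}(\pi)=(-1)^{e(\dd)}$, independent of $\pi$. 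A direct count of arrows, framing arrows and the gauge group gives $e(\dd)=\sum_{a\in Q_{1}}d_{s(a)}d_{t(a)}+|\dd|-\sum_{i}d_{i}^{2}$, whence $e(\dd)\equiv|\dd|+\langle\dd,\dd\rangle_{Q}\pmod 2$ because $\langle\dd,\dd\rangle_{Q}=\sum_{i}d_{i}^{2}-\sum_{a}d_{s(a)}d_{t(a)}$. Therefore $\nu_{\M_{\dd}}(\pi)=(-1)^{|\dd|+\langle\dd,\dd\rangle_{Q}}$, and summing over all $\pi$ with $\dd(\pi)=\dd$ and then over $\dd$ with $|\dd|=n\geq 1$ yields the stated formula.

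I expect the main obstacle to be the local analysis at the $T$-fixed points: $\M_{\dd}$ is genuinely singular and non-reduced there, so identifying $\nu_{\M_{\dd}}(\pi)$ with the uniform sign $(-1)^{|\dd|+\langle\dd,\dd\rangle_{Q}}$ demands a careful weight-by-weight bookkeeping of the tangent--obstruction complex of the framed representation and a genuine use of the Calabi--Yau symmetry of the McKay potential to cancel the Milnor-fibre corrections in Behrend's formula. The second, more combinatorial, point that needs care is the bijection between $T$-invariant cyclic submodules for the framing $\ff$ and $Q$-multi-colored plane partitions carrying the correct dimension vectors; this is exactly the place where the present choice of framing departs from the one used in \cite{BCY}, so the count cannot simply be imported from there.
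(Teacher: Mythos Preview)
Your proposal is correct and follows essentially the same route as the paper: act by the Calabi--Yau subtorus $T_0\subset(\mathbb{C}^*)^3$ on the critical-locus model of $\Hilb^{\dd}(Q,W)$, identify the fixed points with $Q$-multi-colored plane partitions, and read off the sign as $(-1)^{\dim\Hilb^{\dd}(\mathbb{C}Q)}=(-1)^{|\dd|-\langle\dd,\dd\rangle_Q}$. The paper handles both of the obstacles you flag more directly than you anticipate: for the sign it simply cites \cite[Proposition~3.3]{BF08}, which gives $\nu=(-1)^{\dim}$ at isolated fixed points of any torus-equivariant critical locus on a smooth variety, so no Calabi--Yau weight-pairing or Milnor-fibre bookkeeping is needed; and for the fixed-point classification it argues concretely that $T_0$-invariant left ideals of $\Jac(Q,W)$ are monomial (using the central subring $\mathbb{C}[u^5,v^5,w^5]$ to bypass the failure of the Nullstellensatz), then matches monomial ideals with multi-colorings, rather than transporting Young's orbifold argument.
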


Unfortunately, we do not obtain a closed formula for $Z^{Q,W}(t)$. But we show the number $n_{\dd}(Q)$ can be computed from the numbers of $\mu_5(1,1,3)$-colored plane partitions.

\subsection*{Notations}

We work over the field $\mathbb{C}$ of complex numbers. All schemes or algebras are separated and noetherian over $\mathbb{C}$. All (sheaves of) algebras are associative and unital. 
By ``non-commutative'', we mean not necessarily commutative, and we assume that non-commutative rings are both left and right noetherian. For a (sheaf of) non-commutative ring $A$, an $A$-module is always a left $A$-module. All rings without specified non-commutative are commutative.

\vspace{1mm}

We are particularly interested in a special class of non-commutative rings, \emph{quantum polynomial rings}. They are polynomial rings with variables only commute up to a non-zero scalar. We will use the notation
\[
\mathbb{C}\langle x_1,\ldots, x_n\rangle_{(q_{ij})}:=\mathbb{C}\langle x_1,\ldots, x_n\rangle\big/\left(x_ix_j-q_{ij}x_jx_i \right),
\]
where $q_{ij}\in\mathbb{C}^*$ and $q_{ii}=q_{ij}q_{ji}=1$ for all $i,j$.

\vspace{1mm}

For any scheme $X$ of finite type (over $\mathbb{C}$), we write $\chi(X)$ for the \emph{topological} Euler characteristic of $X$. We also consider the weighted Euler characteristic
\[
\chi_{\vir}(X):=\chi(X,\nu_X)=\sum_{c\in\mathbb{Z}} c\chi(\nu_X^{-1}(c)),
\]
where $\nu_X$ is the Behrend constructible function. For a locally closed subscheme $Z\subset X$, we write
\[
\chi_{\vir}(X,Z)=\chi(Z,\nu_X|_Z).
\]

\vspace{1mm}

We will study various of \emph{generating functions}. We will use the notation $Z(t_1,\ldots,t_n)$, which is an element in the formal power series $\mathbb{Z}[\![t_1,\ldots,t_n]\!]$. The constant term $Z(0,\ldots,0)$ will always be $1$, hence the product between generating functions makes sense.

\vspace{4mm}

Throughout this paper, we only consider the generic quantum Fermat threefold. We will fix a particular graded algebra
\[
\mathbb{C}\langle t_0,\ldots,t_4\rangle_{(q_{ij})}\big/\left( \sum_{k=0}^4 t_k^5\right)
\]
with quantum parameters
\[
(q_{ij})_{i,j}=\begin{pmatrix}
1 & q & q^{-1} & q & q^{-1} \\
q^{-1} & 1 & q & q^{-1} & q \\
q & q^{-1} & 1 & q & q^{-1} \\
q^{-1} & q & q^{-1} & 1 & q \\
q & q^{-1} & q & q^{-1} & 1
\end{pmatrix},
\]
where $q\in\mu_5$ is a fixed primitive root. Let $(X,\sh{A})$ be the associated pair of smooth projective variety $X$ with a coherent sheaf $\sh{A}$ of non-commutative $\sh{O}_X$-algebras on $X$. See \cite[\textsection 3.3]{Liu19} for precise definitions.

\section{Quivers with potential}

We briefly review non-commutative DT theory for a quiver with potential and fix some notations. A standard reference is \cite{Sze08}.

Let $Q=(Q_0,Q_1)$ be a quiver, where $Q_0$ is the set of vertices, and $Q_1$ the set of arrows. Let $\mathbb{C}Q$ be the path algebra. We will denote $N_Q:=\mathbb{Z}^{\oplus Q_0}$ the free abelian group of dimension vectors, and $N_Q^+:=\mathbb{Z}_{\geq 0}^{\oplus Q_0}$. There is a bilinear form on $N$ defined by
\[
\langle \dd,\dd'\rangle_Q=\sum_{i\in Q_0} d_i d'_{i}-\sum_{a\in Q_1} d_{s(a)} d'_{t(a)}.
\]
The Euler pairing is given by
\[
\chi_Q(\dd,\dd')=\langle \dd,\dd'\rangle_Q-\langle \dd',\dd\rangle_Q.
\]

Let $\ff\in N_Q^+$ be a framing vector, $\ff\neq 0$. A $\ff$-framed representation of $Q$ is a representation $V=(V_i,T_a)_{i\in Q_0, a\in Q_1}$ of $Q$ with vectors $v_i^1,\ldots, v_i^{f_i}$ in $V_i$ for each $i$ which generates $V$ (as a left $\mathbb{C}Q$-module).

Let $W$ be a potential of $Q$, a linear combination of cyclic paths. We define the Jacobi algebra
\[
\Jac(Q,W)=\mathbb{C}Q\big/\left(\partial_a W\right)_{a\in Q_1},
\]
A representation of $(Q,W)$ is a finite-dimensional left $\Jac(Q,W)$-module, and one define framed representations of $(Q,W)$ similarly.

It is known that the fine moduli space $M^{\ff,\dd}(Q,W)$ of $\ff$-framed representations of $(Q,W)$ is a critical locus of a regular function on a smooth scheme. Therefore it makes sense to define DT invariants via weighted Euler characteristics. Let
\[
Z^{Q,W,\ff}(\tt)=\sum_{\dd\in N_Q^+}\chi_{\vir}\big(M^{\ff,\dd}(Q,W)\big)\,\tt^{\dd}
\]
be the generating function of DT invariants of $(Q,W)$ with framing vector $\ff$, where $\tt=(t_i)_{i\in Q_0}$ and $\tt^{\dd}=\prod_{i\in Q_0} t_i^{d_i}$. The framing vector $\ff$ should be interpreted as a choice of stability condition.

If the framing vector $\ff=(1,\ldots,1)$, we will simply write $Z^{Q,W}$ for $Z^{Q,W,(1,\ldots,1)}$. In this case, $(1,\ldots,1)$-framed representations are finite-dimensional \emph{cyclic} $\Jac(Q,W)$-modules. We write
\[
\Hilb^{\dd}(Q,W):= M^{(1,\ldots,1),\dd}(Q,W)
\]
which can be viewed as Hilbert schemes of points on the non-commutative affine space $\Jac(Q,W)$. For integer $n$, let
\[
\Hilb^n(Q,W)=\prod_{|\dd|=n}\Hilb^{\dd}(Q,W)
\]
We will abuse the notation and write
\[
Z^{Q,W}(t):=Z^{Q,W}(t,\ldots,t)=\sum_{n=0}^{\infty}\chi_{\vir}\big(\Hilb^n(Q,W)\big)\,t^n.
\]

\section{Simple modules and the Ext-quiver}

In this section, we study zero-dimensional coherent $\sh{A}$-modules, which we will simply call finite-dimensional $\sh{A}$-modules. Let $\Coh(\sh{A})_{\fd}$ be the category of finite-dimensional $\sh{A}$-modules.

\subsection{Finite-dimensional $\sh{A}$-modules}

We first note that the category $\Coh(\sh{A})_{\fd}$ can be studied locally. Any finite-dimensional $\sh{A}$-module is supported (as a coherent sheaf on $X$) in finitely many points.

\begin{lemma}
For any finite-dimensional $\sh{A}$-module $\sh{F}$,
\[
\sh{F}=\bigoplus_{x\in\supp(\sh{F})}\sh{F}_x.
\]
\end{lemma}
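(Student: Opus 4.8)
The plan is to show that a finite-dimensional $\sh{A}$-module $\sh{F}$, viewed as a coherent sheaf on $X$ supported at finitely many points, automatically splits as the direct sum of its stalks, and that this splitting is compatible with the $\sh{A}$-action. First I would recall that $\sh{A}$ is a coherent sheaf of $\sh{O}_X$-algebras, so any $\sh{A}$-module is in particular an $\sh{O}_X$-module, and being finite-dimensional it is a torsion coherent $\sh{O}_X$-module with zero-dimensional support $\supp(\sh{F})=\{x_1,\ldots,x_r\}$. For an $\sh{O}_X$-module this is standard: a coherent sheaf with finite support decomposes as $\sh{F}\cong\bigoplus_{i}\sh{F}_{x_i}$, where $\sh{F}_{x_i}$ denotes the skyscraper sheaf at $x_i$ given by the stalk, because the points $x_i$ are closed and pairwise disjoint, so one can find an affine (or even just a Zariski) open cover separating them, and $H^1$ vanishes against a sheaf supported on a finite set of closed points. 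Concretely, on an affine open $\Spec R$ containing all the $x_i$ the $R$-module $M=\Gamma(\sh{F})$ is finite length, hence $M\cong\bigoplus_i M_{\mathfrak{m}_i}$ by the structure theory of Artinian modules / the primary decomposition of a module of finite length over a Noetherian ring.

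The key point to check is that this canonical $\sh{O}_X$-linear decomposition is actually a decomposition of $\sh{A}$-modules. For this I would use that $\sh{A}$ is $\sh{O}_X$-coherent, hence the central subsheaf $\sh{O}_X\to\sh{A}$ gives each stalk $\sh{A}_x$ the structure of an algebra over the local ring $\sh{O}_{X,x}$, and for any $\sh{A}$-module the stalk $\sh{F}_x$ is an $\sh{A}_x$-module. Because $\sh{F}$ has finite support, the idempotent decomposition of $1\in\Gamma(\sh{O}_X,\text{near }\supp\sh{F})$ acting on $\sh{F}$ — i.e.\ the orthogonal idempotents $e_i$ projecting onto $\sh{F}_{x_i}$, which exist precisely because the supports are disjoint closed points — are central (they come from $\sh{O}_X$), so they commute with the $\sh{A}$-action. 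Hence each summand $\sh{F}_{x_i}=e_i\sh{F}$ is an $\sh{A}$-submodule, and $\sh{F}=\bigoplus_i e_i\sh{F}=\bigoplus_{x\in\supp(\sh{F})}\sh{F}_x$ as $\sh{A}$-modules.

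Carrying this out in order: (1) identify $\supp(\sh{F})$ as a finite set of closed points using coherence and finite-dimensionality; (2) produce, locally around the support, central idempotents $e_x\in\sh{O}_X$ (sections of the structure sheaf, or equivalently elements of the semilocal ring obtained by localizing at the finitely many maximal ideals) with $\sum_x e_x=1$ on a neighbourhood of $\supp(\sh{F})$ and $e_x$ acting as the projector onto the stalk at $x$; (3) observe $e_x$ is central in $\sh{A}$ since it lies in the image of $\sh{O}_X\to\sh{A}$, so $e_x\sh{F}$ is an $\sh{A}$-submodule; (4) conclude $\sh{F}=\bigoplus e_x\sh{F}$ and identify $e_x\sh{F}$ with the stalk/skyscraper $\sh{F}_x$. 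I expect the main (albeit minor) obstacle to be bookkeeping about where the idempotents live: $\sh{F}$ is a sheaf on all of $X$ but the idempotents only exist on a neighbourhood $V$ of the finite support, so one should either work with the pushforward along $V\hookrightarrow X$ or note that outside $\supp(\sh{F})$ the sheaf is zero so the decomposition extends by zero; once that is set up the argument is essentially the classical decomposition of a finite-length module, upgraded to the non-commutative setting by the centrality of $\sh{O}_X$ inside $\sh{A}$.
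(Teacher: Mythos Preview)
Your proposal is correct and takes essentially the same approach as the paper: first decompose $\sh{F}$ as an $\sh{O}_X$-module into its stalks, then argue that this decomposition is $\sh{A}$-linear. The paper's proof is terser---it simply says ``the $\sh{A}$-action on $\sh{F}$ is defined locally, so each $\sh{F}_x$ is an $\sh{A}$-submodule and the projection $\sh{F}\to\sh{F}_x$ is $\sh{A}$-linear''---whereas you make this explicit via central idempotents coming from $\sh{O}_X$; these are two phrasings of the same mechanism.
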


\begin{proof}
We have $\sh{F}=\bigoplus_{x\in\supp(\sh{F})}\sh{F}_x$ as coherent sheaves on $X$. Since the $\sh{A}$-action on $\sh{F}$ is defined locally, $\sh{F}_x$ is naturally an $\sh{A}$-submodule for each $x$. Furthermore, it is clear that the projection $\sh{F}\to\sh{F}_x$ is a morphism of $\sh{A}$-modules. We see that $\sh{F}=\bigoplus_{x\in\supp(\sh{F})}\sh{F}_x$ in $\Coh(\sh{A})_{\fd}$.
\end{proof}

If we choose an affine open cover $\{U_i\}$ of $X$, then for each $U_i$, $\sh{A}|_{U_i}$ is given by a non-commutative algebra, and $\Coh(\sh{A}|_{U_i})$ is the category $\Mod(\sh{A}|_{U_i})$ of honest finite-dimensional modules. Then the categories $\{\Coh(\sh{A}|_{U_i})\}_i$ can be regarded as an affine open cover of $\Coh(\sh{A})$.

For the generic quantum Fermat quintic threefold $(X,\sh{A})$,
\[
X=\Proj\big(\mathbb{C}[x_0,\ldots,x_4]/(x_0+\ldots+x_4)\big)
\]
is a hyperplane in $\mathbb{P}^4$. Let $\{U_{ij}\}_{i\neq j}$ be the affine open cover of $X$ defined by $U_{ij}=(x_ix_j\neq 0)$.

Let $A$ be the non-commutative algebra corresponding to $\sh{A}|_{U_{01}}$. By definition, $A$ is the degree zero part of the graded algebra
\[
\left(\mathbb{C}\langle t_0,\ldots,t_4\rangle\big/\Big(\sum_k t_k^5, t_it_j-q_{ij}t_jt_i\Big)\right)\left[ \frac{1}{t_0^5},\frac{1}{t_1^5}\right].
\]
Then we can write
\begin{equation}\label{eq:local-algebra}
    A=\mathbb{C}\Big\langle u_1,\frac{1}{u_1},u_2,u_3,u_4\Big\rangle\Big/\left(1+\sum_{k=1}^4 u_k^5, u_iu_j-\overline{q}_{ij}u_ju_i\right),
\end{equation}
where $u_i=(t_it_0^4)/(t_0^5)$ and
\begin{equation}\label{eq:local-quan}
    \big(\overline{q}_{ij}\big)=\begin{pmatrix}
    1 & q^3 & q^4 & q^3 \\
    q^2 & 1 & q^4 & q^4 \\
    q & q & 1 & q^3 \\
    q^2 & q & q^2 & 1
    \end{pmatrix}.
\end{equation}

\begin{lemma}
For each $i\neq j$, there is an isomorphism $f:U_{01}\to U_{ij}$ such that $f^*(\sh{A}|_{U_{ij}})$ is isomorphic to $A$, up to a possible change of primitive root $q\in\mu_5$.
\end{lemma}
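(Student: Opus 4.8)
The plan is to exploit the symmetry of the situation: the graded algebra $R=\mathbb{C}\langle t_0,\ldots,t_4\rangle/(\sum_k t_k^5,\, t_it_j-q_{ij}t_jt_i)$ carries a large automorphism group, and the open sets $U_{ij}$ are permuted by maps induced by these automorphisms. The affine algebra attached to $\sh{A}|_{U_{ij}}$ is, by the same recipe used for $A=\sh{A}|_{U_{01}}$, the degree-zero part of the Ore localization $R[t_i^{-5},t_j^{-5}]$. So it suffices to produce, for each pair $i\neq j$, a graded algebra isomorphism $R\to R$ (possibly after replacing the primitive root $q$ by another primitive root) carrying the multiplicative set $\{t_0^5,t_1^5\}$ to $\{t_i^5,t_j^5\}$; such an isomorphism then descends to an isomorphism of the localizations, restricts to an isomorphism of degree-zero parts, and at the level of schemes gives the desired $f:U_{01}\to U_{ij}$ with $f^*(\sh{A}|_{U_{ij}})\cong A$.

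The first step is to identify the relevant automorphisms. There are two obvious families. First, rescaling $t_k\mapsto \lambda_k t_k$ with $\lambda_k\in\mathbb{C}^*$ preserves the quantum-commutation relations and rescales the Fermat relation $\sum t_k^5$ by nothing if we also allow an overall adjustment; more precisely one checks that for suitable $\lambda_k$ (fifth roots of unity times a common scalar) the relation $\sum t_k^5$ is preserved, and this lets us normalize leading coefficients. Second, and more importantly, any permutation $\sigma$ of $\{0,1,2,3,4\}$ sends the Fermat relation to itself and sends the relation $t_it_j-q_{ij}t_jt_i$ to $t_{\sigma i}t_{\sigma j}-q_{ij}t_{\sigma j}t_{\sigma i}$; this is a relation of the same shape for the permuted algebra provided $q_{\sigma i,\sigma j}=q_{ij}$, i.e. provided $\sigma$ is a symmetry of the matrix $(q_{ij})$. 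Inspecting the explicit matrix, its entries depend only on $i-j \bmod 5$ (with value $1$ on the diagonal, $q$ when $i-j\equiv \pm 2$, and $q^{-1}$ when $i-j\equiv\pm 1$, up to the cyclic pattern one reads off), so the cyclic group $\mathbb{Z}/5$ acting by $t_k\mapsto t_{k+1}$ is a symmetry, and reflections $i\mapsto -i$ or $i\mapsto c-i$ send $(q_{ij})$ to its transpose $(q_{ji})=(q_{ij}^{-1})$, which is the matrix obtained by replacing $q$ by $q^{-1}$ — still a primitive fifth root. Combining the cyclic rotations with one reflection one obtains a dihedral group $D_5$ of order $10$ acting (after the allowed change $q\leftrightarrow q^{-1}$) by graded algebra isomorphisms, and $D_5$ acts transitively on ordered pairs of distinct elements of $\mathbb{Z}/5$ — indeed $|D_5|=10=5\cdot 4/2 \cdot ?$, so one should be careful: $D_5$ has order $10$ but there are $20$ ordered pairs, so $D_5$ is transitive on the $10$ \emph{unordered} pairs, and transposing within a pair is exactly the swap of the two inverted variables, which is harmless because localizing at $\{t_i^5,t_j^5\}$ does not depend on the order. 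Hence $D_5$ (together with $q\leftrightarrow q^{-1}$) moves $\{t_0,t_1\}$ to $\{t_i,t_j\}$ for every $i\neq j$.

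The remaining steps are then routine bookkeeping: (i) check that each such graded isomorphism $\phi:R\to R'$ (where $R'$ is the algebra with $q$ possibly inverted) sends the Ore set generated by $t_0^5,t_1^5$ onto the one generated by $t_i^5,t_j^5$, so that $\phi$ extends to the localizations; (ii) observe that $\phi$ is degree-preserving, hence restricts to an isomorphism of the degree-zero parts, which by definition are the algebras representing $\sh{A}|_{U_{01}}$ and $\sh{A}|_{U_{ij}}$ respectively; (iii) note that the same $D_5$-element induces the coordinate permutation $f:\mathbb{P}^4\to\mathbb{P}^4$ restricting to $X$ (since $X$ is the hyperplane $\sum x_k=0$, which is $S_5$-invariant, in particular $D_5$-invariant) and carrying $U_{01}$ to $U_{ij}$, and that under this $f$ the pullback $f^*(\sh{A}|_{U_{ij}})$ is precisely the sheaf on $U_{01}$ corresponding to the transported algebra, which we have identified with $A$. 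The only genuine content — and the step I would be most careful about — is verifying that the permutation symmetry group of the specific matrix $(q_{ij})$, allowing the global substitution $q\mapsto q^{-1}$, really is all of $D_5$ and that $D_5$ is transitive on unordered pairs; this is a finite check, but it is the crux, since if the matrix had fewer symmetries the statement would fail. I would do this by writing $q_{ij}$ as a function of $i-j\bmod 5$, confirming it is $\{1,q,q^{-1}\}$-valued with the antisymmetry $q_{ij}q_{ji}=1$, reading off that cyclic shift preserves it and reflection transposes it, and then checking transitivity of $D_5$ on the $10$ unordered pairs directly (e.g. the orbit of $\{0,1\}$ under rotation already has size $5$, covering all "adjacent" pairs, and the orbit of $\{0,2\}$ covers the other $5$, while the reflection swaps these two orbit-types, or alternatively a single reflection through a vertex already mixes them).
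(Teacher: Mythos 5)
Your approach has a genuine gap, and it is at exactly the step you flag as the crux. You claim $D_5$ (allowing the global substitution $q\leftrightarrow q^{-1}$) acts transitively on the $10$ unordered pairs of $\mathbb{Z}/5\mathbb{Z}$. That is false: $D_5$ has two orbits of size $5$, the ``adjacent'' pairs $\{k,k+1\}$ and the ``non-adjacent'' pairs $\{k,k+2\}$. A reflection of a regular pentagon is a graph automorphism and hence sends adjacent pairs to adjacent pairs (e.g.\ $k\mapsto -k$ sends $\{0,1\}$ to $\{0,4\}$), so the reflections do \emph{not} mix the two orbit types, contrary to what you assert; the numerical coincidence $|D_5|=10=\binom{5}{2}$ is necessary for simple transitivity but does not give it. Worse, $D_5$ is already the full group of coordinate permutations preserving $(q_{ij})$ up to a change of primitive root: since $q_{ij}$ depends only on $i-j\bmod 5$, it suffices to test the affine maps $k\mapsto ak+b$, and checking the explicit values $(q_{01},q_{02},q_{03},q_{04})=(q,q^{-1},q,q^{-1})$ against $q\mapsto q^m$ shows only $a=\pm1$ is compatible. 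So there is no graded isomorphism of $R$ carrying $\{t_0^5,t_1^5\}$ to $\{t_0^5,t_2^5\}$, and your plan cannot be repaired inside its own framework.

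The lemma is nevertheless true, and the paper's argument works precisely because it does \emph{not} require a graded symmetry of $R$. One takes an arbitrary permutation $\sigma$ matching $\{i,j\}$ with $\{0,1\}$ and computes the degree-zero part of $R[t_i^{-5},t_j^{-5}]$ directly, using generators of the form $t_{\bullet}\,t_i^4\,t_i^{-5}$. The resulting quantum parameters are products of $q_{\sigma^{-1}(k),\sigma^{-1}(l)}$ with conjugation factors $q_{i,\sigma^{-1}(\bullet)}^{\pm 4}$ picked up from moving $t_i^4$ past the other variables, and these factors can compensate for a $\sigma$ that fails to preserve $(q_{ij})$. For instance, for $\{i,j\}=\{0,2\}$ the permutation $k\mapsto 2k\bmod 5$ --- not in $D_5$, and not a symmetry of $(q_{ij})$ --- reproduces the matrix \eqref{eq:local-quan} with $q$ replaced by the primitive root $q^3$. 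This localization-level cancellation is exactly what your ``routine bookkeeping'' in step (ii) would have to verify, and it is where the real content of the lemma sits.
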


\begin{proof}
This is proved by an explicit computation. For each $i\neq j$, let $\sigma\in S_5$ be a permutation mapping $\{i,j\}$ to $\{0,1\}$. Then $\sigma$ defines a change of variables and induces an automorphism $f:U_{01}\to U_{ij}$. In this case we can compute the non-commutative algebra $f^*(\sh{A}|_{U_{ij}})$ as above. We see that there exists a permutation $\sigma$ so that $f^*(\sh{A}|_{U_{ij}})$ is equal to $A$, after a possible change of primitive root $q\in\mu_5$.
\end{proof}

Therefore it is sufficient to study finite-dimensional $A$-modules.

\subsection{Simple modules and the Ext-quiver}

As seen in \cite{KS08}, semistable objects in a Calabi--Yau-3 category should be locally given by representations of quivers with potential, and the quivers are obtained by the Ext-quivers at stable objects. See also \cite{Toda18} for the case of the category of coherent sheaves on a Calabi--Yau threefold.

For the category $\Coh(\sh{A})_{\fd}$, a finite-dimensional $\sh{A}$-module is always semistable, and it is stable if and only if it is simple. We consider the natural forgetful map
\[
\Hilb^n(\sh{A})\to \msp{M}^n(\sh{A}):=\msp{M}^{\ss,n}(\sh{A}).
\]
This is the analogue of Hilbert--Chow map. The closed points of the coarse moduli scheme $\msp{M}^n(\sh{A})$ correspond to polystable $\sh{A}$-modules, that is, semisimple $\sh{A}$-modules.

As shown in the previous section, we only need to consider finite-dimensional $A$-modules.

\begin{lemma}
All simple $A$-modules have dimension $1$ or a multiple of $5$. Furthermore, there are exactly $5$ one-dimensional $A$-module given by
\[
(u_1,u_2,u_3,u_4)=(\xi,0,0,0),
\]
where $\xi\in\mathbb{C}$ and $1+\xi^5=0$.
\end{lemma}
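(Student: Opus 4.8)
The plan is to analyze the structure of the algebra $A$ in \eqref{eq:local-algebra} using its quantum commutation relations. First I would note that in $A$ the variables $u_1,u_2,u_3,u_4$ satisfy $u_iu_j = \overline{q}_{ij} u_j u_i$ with the $\overline{q}_{ij}\in\mu_5$ given by \eqref{eq:local-quan}, together with the relation $1+\sum_k u_k^5 = 0$ and the invertibility of $u_1$. The key observation is that since $q$ is a primitive $5$-th root of unity, each off-diagonal $\overline{q}_{ij}\neq 1$, so the action of each $u_i$ on any finite-dimensional $A$-module interacts with the others via genuine (non-trivial) scalar twists. The standard technique here is to pass to a common eigenspace decomposition for a maximal commuting family of operators (powers of the $u_i$, which are central-ish after raising to the $5$-th power).

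The main steps I would carry out are as follows. Let $M$ be a finite-dimensional $A$-module. Step one: observe that $u_1^5, u_2^5, u_3^5, u_4^5$ all commute with each other (since $(\overline q_{ij})^{25}=1$, in fact $(\overline q_{ij})^5 = 1$), so they act on $M$ as a commuting family of operators; moreover $u_1$ is invertible so $u_1^5$ acts invertibly. Step two: decompose $M$ into generalized eigenspaces for this commuting family; since $1 + \sum u_k^5 = 0$ acts as zero, on each generalized eigenspace the eigenvalues $(\lambda_1,\ldots,\lambda_4)$ satisfy $1+\sum\lambda_k = 0$ with $\lambda_1\neq 0$. Step three: the key point — conjugation by $u_j$ sends the $u_i^5$-eigenvalue $\lambda_i$ to $\overline{q}_{ij}^{-5}\lambda_i = \lambda_i$ (since $\overline q_{ij}^5=1$), so actually each $u_j$ preserves every generalized eigenspace; but then to get the orbit structure I instead track the finer grading. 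Here is the correct mechanism: let $g$ be a generator of a suitable $\mu_5$ acting by $u_i \mapsto (\text{scalar})\, u_i$; conjugation by any $u_j$ acts on the eigenvalue-tuple of the operators $u_i$ (not $u_i^5$) by multiplication by $\overline q_{ij}$, cyclically permuting a set of (up to) five eigenlines. Consequently, on a simple module, either the orbit has full size $5$ (giving $\dim M$ a multiple of $5$), or it is a singleton, which forces $u_i$ to act compatibly with $\overline q_{ij}$-scaling only if $u_i$ acts as $0$ for the $i$ with some $\overline q_{ij}\neq 1$ — one checks from \eqref{eq:local-quan} that this forces $u_2 = u_3 = u_4 = 0$, leaving $u_1$ acting as a scalar $\xi$ with $1+\xi^5=0$ from the defining relation.

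So I would organize the proof around the dichotomy: pick a simple $A$-module $M$ and a joint eigenvector $v$ for the operators $u_2^5,u_3^5,u_4^5$ and $u_1$; the subspace spanned by the $\mathbb{Z}/5$-orbit of $v$ under the remaining generators is $A$-stable (using that fifth powers act as scalars there), hence equals $M$ by simplicity; its dimension is the orbit size, which is $5$ unless the orbit collapses, and collapse happens precisely when $u_2,u_3,u_4$ kill $v$ — which by simplicity means $u_2=u_3=u_4=0$ on $M$ and $\dim M = 1$. The final enumeration of one-dimensional modules is then immediate: a one-dimensional module is a $\mathbb{C}$-algebra map $A\to\mathbb{C}$; the relations $u_iu_j=\overline q_{ij}u_ju_i$ with $\overline q_{ij}\neq 1$ force the image of at least one of each non-commuting pair to vanish, and checking \eqref{eq:local-quan} one sees $u_2,u_3,u_4\mapsto 0$, while $u_1\mapsto\xi$ with $1+\xi^5 = 0$ by the Fermat relation, giving exactly five such modules.

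The main obstacle I anticipate is making the ``orbit'' argument fully rigorous: one must verify that the span of the $\mathbb{Z}/5$-orbit is genuinely closed under the full $A$-action (not just the generators) and handle the possibility that the orbit eigenvalues are not all distinct — i.e. that $u_j$ could have a nontrivial Jordan block rather than permuting distinct eigenlines. The cleanest fix is to work with a simultaneous eigenvector for the commuting invertible operators $u_1^5,\ldots$ and then observe that on the finite-dimensional simple module the subalgebra generated by these fifth powers acts via a single character (by a Schur-type argument, since they are central in $A$), which removes the Jordan-block subtlety entirely and reduces everything to a clean $(\mathbb{Z}/5)$-grading computation.
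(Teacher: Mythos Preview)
Your overall strategy --- exploit that the fifth powers $u_i^5$ are central (hence act as scalars on a simple module by Schur) and then use the $\mathbb{Z}/5$-grading coming from the $u_1$-eigenspace decomposition --- is a legitimate route, but the key step as you have written it has a genuine gap. You assert that the span of the ``$\mathbb{Z}/5$-orbit of $v$'' under $u_2,u_3,u_4$ is an $A$-stable subspace whose dimension equals the orbit size, hence $5$ or $1$. This is not justified: the cyclic $A$-submodule generated by a single $u_1$-eigenvector $v$ is all of $M$ by simplicity, and nothing forces it to be $5$-dimensional. Different monomials in $u_2,u_3,u_4$ applied to $v$ can land in the \emph{same} $u_1$-eigenspace without being proportional, so a five-element set of the form $\{m_i v\}_{i\in\mathbb{Z}/5}$ need not be closed under the $A$-action, and conversely the smallest $A$-stable subspace containing $v$ need not have dimension $\leq 5$. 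The mechanism that actually works from your setup is different: the decomposition $M=\bigoplus_{i\in\mathbb{Z}/5}M_i$ into $u_1$-eigenspaces is a grading, each $u_j$ (for $j\geq 2$) shifts the grading by a unit of $\mathbb{Z}/5$, and if any such $u_j$ is invertible it induces isomorphisms among all the $M_i$, giving $\dim M=5\dim M_0$. If instead every $u_j$ with $j\geq 2$ has $u_j^5=0$, then $\ker u_j$ is a nonzero $A$-submodule, so $u_j=0$ on $M$ and $M$ is a simple module over the commutative quotient $\mathbb{C}[u_1^{\pm 1}]/(1+u_1^5)$, hence $1$-dimensional. Your proposal oscillates between this correct ``equal graded pieces'' picture and an incorrect ``single orbit of size $5$'' picture; the fix you propose at the end (Schur for the central fifth powers) handles diagonalizability but does not address this.

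The paper's proof bypasses all eigenspace analysis with a two-line determinant trick. First, since $u_iu_j=\overline{q}_{ij}u_ju_i$, each $\ker u_i$ is $A$-invariant, so on a simple module every $u_i$ is either zero or invertible. Taking determinants of the commutation relation then gives $\det(u_i)\det(u_j)=\overline{q}_{ij}^{\,d}\det(u_j)\det(u_i)$ with $d=\dim M$; if $5\nmid d$ then $\overline{q}_{ij}^{\,d}\neq 1$ for all $i\neq j$, forcing $\det(u_i)\det(u_j)=0$. Since $u_1$ is invertible this kills $u_2,u_3,u_4$, whence $d=1$. Your enumeration of the one-dimensional modules is fine and agrees with the paper.
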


\begin{proof}
Let $V$ be a $d$-dimensional simple $\sh{A}$-module. We abuse the notation and write $u_i\in\End_{\mathbb{C}}(V)$ for the action of $u_i\in A$. The relations $u_i u_j=q_{ij}u_ju_i$ imply that for each $i$, $\ker(u_i)\subset V$ is an invariant subspace. Thus for each $i$, $u_i$ is either $0$ or invertible. Next, taking determinants of the relations yields
\[
\det(u_i)\det(u_j)=q_{ij}^d\det(u_j)\det(u_i).
\]
If $d$ is not a multiple of $5$, then $q_{ij}^d\neq 1$. Since $u_0$ is invertible, $\det(u_i)=0$ for all $i\neq 0$ and thus $u_i=0$. We conclude that $d=1$ and $u_0$ acts as a scalar $\xi\in\mathbb{C}$ with $1+\xi^5=0$.
\end{proof}

We observe that the $5$ one-dimensional simple $A$-modules are supported at the point $p_{01}=\pcoor{1:-1:0:0:0}\in X$, and they are all simple $A$-modules supported at $p_{01}$. We denote these $5$ simple modules by $E_i$'s, which corresponds to $\xi=-q^i$, for $i\in\mathbb{Z}/5$.

\begin{definition}
The Ext-quiver $Q$ associated to $\{E_i\}_i$ is the quiver whose vertex set $Q_0=\{E_i\}_i$ and the number of arrows from $E_i$ to $E_j$ is equal to the dimension of $\Ext^1_A\big(E_i, E_j\big)$.
\end{definition}

\begin{proposition}
The $\Ext$-quiver $Q$ associated to $(E_0,E_1,\ldots,E_4)$ is
\begin{equation}\label{eq:the-quiver}
    \begin{xy} 0;<2pt,0pt>:<0pt,-2pt>:: 
    (40,16) *+[Fo]{E_0} ="0",
    (8,40) *+[Fo]{E_2} ="2",
    (32,40) *+[Fo]{E_1} ="1",
    (0,16) *+[Fo]{E_3} ="3",
    (20,0) *+[Fo]{E_4} ="4",
    (20,20) *+<2pt>{},
    "4":{\ar@<-.5ex>"2"},
    "4":{\ar@<.5ex>"2"},
    "4":{\ar@/_/"3"},
    "0":{\ar@<-.5ex>"3"},
    "0":{\ar@<.5ex>"3"},
    "0":{\ar@/_/"4"},
    "1":{\ar@<-.5ex>"4"},
    "1":{\ar@<.5ex>"4"},
    "1":{\ar@/_/"0"},
    "2":{\ar@<-.5ex>"0"},
    "2":{\ar@<.5ex>"0"},
    "2":{\ar@/_/"1"},
    "3":{\ar@<-.5ex>"1"},
    "3":{\ar@<.5ex>"1"},
    "3":{\ar@/_/"2"},
    \end{xy}
\end{equation}
\end{proposition}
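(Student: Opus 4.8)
The plan is to compute $\Ext^1_A(E_i,E_j)$ directly from a projective (or at least a finite) resolution of each $E_i$ as a left $A$-module, using the explicit presentation of $A$ in \eqref{eq:local-algebra} together with the local quantum parameters \eqref{eq:local-quan}. The key observation is that $E_i$ is the quotient of $A$ by the left ideal generated by $u_1-\xi_i$ (with $\xi_i=-q^i$) together with $u_2,u_3,u_4$, where $u_1$ is a unit in $A$, so effectively $E_i=A/(u_1-\xi_i,u_2,u_3,u_4)$. Since $u_1$ is invertible, after the harmless change of variable $v_1=u_1-\xi_i$ the pair $(A,E_i)$ looks, near this module, like a (quantum) polynomial ring in the four ``coordinates'' $v_1,u_2,u_3,u_4$ modulo the Fermat-type relation $1+\sum u_k^5=0$; but the point $p_{01}$ is a smooth point of $X$, and in the analytic/formal local picture the relation becomes a unit times a first-order term, so the completion of $A$ at this point is a quantum power series ring in \emph{three} variables. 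Thus each $E_i$ has a length-$4$ Koszul-type resolution, and $\Ext^j_A(E_i,E_j)$ is the cohomology of the associated quantum Koszul complex.

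Concretely, I would argue as follows. First, reduce to the completed local ring $\widehat{A}$ at $p_{01}$: the five modules $E_i$ are all supported there, $\Ext$ is computed locally, and completion is flat, so $\Ext^1_A(E_i,E_j)=\Ext^1_{\widehat A}(E_i,E_j)$. Second, identify $\widehat A$ with a quantum power series ring $\mathbb{C}\langle\!\langle x,y,z\rangle\!\rangle_{(\overline q_{ij})}$ in three variables — this uses that $1+\sum u_k^5=0$ can be solved, in the completion and after re-centering at $\xi_i$, for one of the $u_k$'s as a function of the other three (the implicit function theorem / Weierstrass preparation in the quantum setting, which works because the relevant linear coefficient is $5\xi_i^4\neq 0$). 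The $\mu_5$-weights: the three surviving quantum coordinates, call them $x,y,z$, carry definite ``weights'' recorded by how the scalar $q$ enters $\overline q_{ij}$, and these weights are exactly $(1,1,3)$ up to the identification $E_i\leftrightarrow\xi=-q^i$. Third, over such a quantum space the simple module $\mathbb{C}_0$ has the standard quantum Koszul resolution $0\to \widehat A\to \widehat A^{\oplus 3}\to \widehat A^{\oplus 3}\to \widehat A\to \mathbb{C}_0\to 0$, and applying $\Hom_{\widehat A}(-,E_j)$ computes $\Ext^*$; in particular $\dim\Ext^1(E_i,E_j)$ equals the number of Koszul generators $x,y,z$ that act compatibly, i.e.\ it counts which of the three weighted generators sends the one-dimensional module at ``$\xi=-q^i$'' to the one at ``$\xi=-q^j$''. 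This is the McKay-quiver combinatorics of $\mu_5$ acting with weights $(1,1,3)$: from vertex $i$ there are two arrows to $i+1$ (the two weight-$1$ generators) and one arrow to $i+3$ (the weight-$3$ generator), which is precisely the quiver drawn in \eqref{eq:the-quiver} once one checks the labeling $E_0,\dots,E_4$ against $\xi=-q^i$.

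The routine parts are the Koszul bookkeeping and the determinant-of-weights check that pins down ``$(1,1,3)$'' rather than some other weight vector; I would relegate those to a short computation citing the standard quantum Koszul complex. \textbf{The main obstacle} is Step 2: justifying cleanly that the completed local algebra $\widehat A$ at $p_{01}$ is genuinely a \emph{three-variable} quantum power series ring, i.e.\ carrying out the quantum analogue of solving the Fermat relation for one variable and verifying that the resulting commutation scalars among the three remaining generators are exactly the $\mu_5(1,1,3)$-weights. One has to be careful that ``dividing out'' the relation $1+\sum u_k^5$ in a noncommutative completion is legitimate — this needs a noncommutative Weierstrass/implicit-function argument, or alternatively a direct check that the relation together with the quantum-commutation relations presents $\widehat A$ on three generators with the claimed relations. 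Once that identification is in hand, the $\Ext^1$ computation and hence the shape of $Q$ follow formally from the quantum Koszul resolution.
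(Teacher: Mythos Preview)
Your approach is more structural than the paper's, which is entirely elementary: the paper simply classifies two-dimensional extensions $0\to E_j\to F\to E_i\to 0$ by writing the action of each generator as an explicit upper-triangular $2\times 2$ matrix, imposes the $q$-commutation relations with $u_1$, and reads off that $\dim\Ext^1_A(E_i,E_j)$ equals the number of indices $k\in\{2,3,4\}$ with $\overline q_{1k}=q^{\,j-i}$. No completion, no resolution, no implicit function theorem---just linear algebra on four matrices.

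Your Step~2, however, contains a genuine error rather than merely a gap. The completion $\widehat A$ at $p_{01}$ is \emph{not} a three-variable quantum power series ring. Such a ring is local with a unique one-dimensional simple module, whereas $\widehat A$ visibly has five non-isomorphic simples $E_0,\dots,E_4$; equivalently, $\sh{A}$ has rank $625$ over $\sh{O}_X$, while a three-variable quantum ring with parameter a primitive fifth root of unity has rank $125$ over its centre. What actually happens when you ``solve'' the Fermat relation in the completion is that $u_1^5=-1-u_2^5-u_3^5-u_4^5$ becomes a central unit admitting a central fifth root $c$, so $e:=u_1 c^{-1}$ satisfies $e^5=1$; you do not eliminate $u_1$, you replace it by a group-like element. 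Thus $\widehat A$ is (the completion of) a crossed product $\mathbb{C}\langle u_2,u_3,u_4\rangle_{(\overline q)}\rtimes\mu_5$, not a three-variable quantum ring---this is exactly the identification $\Jac(Q,W)\cong\mathbb{C}\langle e,u,v,w\rangle_{(\overline q)}/(e^5-1)$ established in the paper's next section.

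Once that crossed-product structure is in hand, your McKay heuristic becomes correct: the simples are the characters of $\mu_5$, and the generators $u_2,u_3,u_4$ carry weights $(\overline q_{12},\overline q_{13},\overline q_{14})=(q^3,q^4,q^3)$, giving two arrows $E_i\to E_{i+3}$ and one arrow $E_i\to E_{i+4}$. So your endpoint is right, but the route you propose requires the crossed-product identification as input, which is both more work than the paper's two-line matrix computation and, in the paper's logical order, established only afterwards.
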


\begin{proof}
The group $\Ext^1_{A}(E_i,E_j)$ is classified by extensions $0\to E_j\to F\to E_i\to 0$. The $u_i$-action on $F$ is of the form
\[
u_0=
\begin{pmatrix}
-q^j & 0 \\
0 & -q^i
\end{pmatrix}
\text{ and }
u_k=
\begin{pmatrix}
0 & a_k \\
0 & 0
\end{pmatrix}
\text{ for $k>0$.}
\]
The relations implies that $a_k(\overline{q}_{1k}q^i-q^j)=0$. Thus
\[
\dim\Ext^1_{A}(E_i,E_j)=\text{the number of $k$'s such that }\overline{q}_{1k}=q^{j-i}.
\]
\end{proof}

We denote the arrows of $Q$ by $a_i,c_i:E_i\to E_{i+3}$ and $b_i: E_i\to E_{i+4}$. From the construction of the Ext-quiver $Q$, we see that the arrows $a_i$'s, $b_i$'s, and $c_i$'s correspond to the actions of $u_2$, $u_3$, and $u_4$ respectively. Then the $q$-commuting relations translate into 
\[
\begin{aligned}[t]
   a_{i}b_{i+1}&-q^4b_{i+4}a_{i+1}&=0, \\ b_{i}c_{i+2}&-q^3c_{i+1}b_{i+2}&=0, \\
   a_{i}c_{i+2}&-q^4c_{i}a_{i+2}&=0.
\end{aligned}
\]
As expected, these relations can be patched into a potential
\[
W=\bm{b}\bm{a}\bm{c}-q^4\bm{b}\bm{c}\bm{a},
\]
where
\[
\begin{split}
    \bm{a} &=a_0+a_1+a_2+a_3+a_4, \\
    \bm{b} &=q^4b_0+b_1+qb_2+q^2b_3+q^3b_4, \\
    \bm{c} &=c_0+c_1+c_2+c_3+c_4.
\end{split}
\]
In the next section, we will show that the quiver $(Q,W)$ with potential in fact gives a local model of $\sh{A}$ near the point $p_{01}\in X$.

%From now on, we rearrange the vertices and write the quiver $Q$ as
%\begin{equation}
%   \begin{xy} 0;<2pt,0pt>:<0pt,-2pt>:: 
%    (40,16) *+{\bullet} ="0",
%    (8,40) *+{\bullet} ="2",
%    (32,40) *+{\bullet} ="1",
%    (0,16) *+{\bullet} ="3",
%    (20,0) *+{\bullet} ="4",
%    (20,20) *+<2pt>{},
%    "4":{\ar@<-.5ex>@/^/"0"},
%    "4":{\ar@<.5ex>@/^/"0"},
%    "4":{\ar"2"},
%    "0":{\ar@<-.5ex>@/^/"1"},
%    "0":{\ar@<.5ex>@/^/"1"},
%    "0":{\ar"3"},
%    "1":{\ar@<-.5ex>@/^/"2"},
%    "1":{\ar@<.5ex>@/^/"2"},
%    "1":{\ar"4"},
%    "2":{\ar@<-.5ex>@/^/"3"},
%    "2":{\ar@<.5ex>@/^/"3"},
%    "2":{\ar"0"},
%    "3":{\ar@<-.5ex>@/^/"4"},
%    "3":{\ar@<.5ex>@/^/"4"},
%    "3":{\ar"1"},
%    \end{xy} 
%\end{equation}
%and $W$ will be the induced potential. \FIXME{Explain the reason?}

\section{Local models of $(X,\sh{A})$}

Let $(Q,W)$ be the quiver with potential from the previous section.

\begin{lemma}
The Jacobi algebra $\Jac(Q,W)$ is isomorphic to
\[
\mathbb{C}\langle e,u,v,w\rangle_{(\overline{q}_{ij})}\big/(e^5-1),
\]
where $(\overline{q}_{ij})$ is the quantum parameters \eqref{eq:local-quan}.
\end{lemma}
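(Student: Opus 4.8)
The plan is to compute the Jacobi algebra $\Jac(Q,W)$ directly from the relations $\partial_a W = 0$ and recognize the result as the claimed quantum polynomial ring modulo $e^5 - 1$. First I would fix notation compatible with the previous section: the idempotents $\varepsilon_i$ at the vertices $E_i$ sum to the identity, and the arrows $a_i, b_i, c_i$ correspond (after the appropriate scalar twists packaged into $\bm a, \bm b, \bm c$) to the actions of $u_2, u_3, u_4$. Taking $\partial_{a_i} W$, $\partial_{b_i} W$, $\partial_{c_i} W$ recovers exactly the three families of $q$-commutation relations displayed before the statement of the potential, so $\Jac(Q,W)$ is the path algebra $\mathbb{C}Q$ modulo those relations. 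The key observation is that the underlying quiver $Q$ is the McKay quiver of a cyclic group action: the three ``doubled'' arrows and one single arrow at each vertex, cyclically permuted, mean that $\mathbb{C}Q$ modulo the $q$-commutation relations is a $\mathbb{Z}/5$-graded algebra whose degree-zero piece, after summing the arrows into $\bm u := \sum_i u_i$ etc., becomes a skew polynomial ring.

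Concretely, I would introduce the element $e = \sum_{i\in\mathbb{Z}/5} \zeta^i \varepsilon_i$ for a suitable root of unity $\zeta$ (chosen so that conjugation by $e$ implements the grading shift by the arrow cycles), together with $u = \bm a$, $v = \bm b$, $w = \bm c$ as global elements of $\Jac(Q,W)$. One checks that $e$ is a unit with $e^5 = 1$ (since $\sum \varepsilon_i = 1$), that $e, u, v, w$ generate $\Jac(Q,W)$ as an algebra (because every arrow $a_i = \varepsilon_{i+3} u \varepsilon_i$ is recovered as a scalar combination of $e^k u$, and similarly for $b_i, c_i$, while the idempotents themselves are polynomials in $e$), and that the three Jacobi relations, once summed over $i$ with the twists already built into $\bm b$, become precisely $uv = \overline q_{23} vu$, $vw = \overline q_{34} wv$, $uw = \overline q_{24} wu$ — matching the relevant off-diagonal entries of \eqref{eq:local-quan}. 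The relations between $e$ and $u, v, w$ are forced: $eu = \overline q_{12}^{?} ue$ type relations come from the fact that $u$ moves the grading by the $a$-cycle, and these must reproduce the entries $\overline q_{12}, \overline q_{13}, \overline q_{14}$ in the first row/column of \eqref{eq:local-quan}. This gives a surjection $\mathbb{C}\langle e,u,v,w\rangle_{(\overline q_{ij})}/(e^5-1) \twoheadrightarrow \Jac(Q,W)$.

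To finish I would check injectivity, which amounts to a dimension/normal-form count: both algebras are free modules over $\mathbb{C}[e]/(e^5-1)$ with basis the ordered monomials $u^\alpha v^\beta w^\gamma$, so comparing graded pieces (say by total degree in $u,v,w$) shows the surjection is an isomorphism. Equivalently, one can verify that $\Jac(Q,W)$ is Morita equivalent to — in fact isomorphic to the matrix-size-adjusted form of — the skew group algebra $\mathbb{C}\langle u,v,w\rangle_{(q_{ij}')} \rtimes \mu_5$, and that this skew group algebra is isomorphic to $\mathbb{C}\langle e,u,v,w\rangle_{(\overline q_{ij})}/(e^5-1)$ by the standard trick of replacing the group-algebra summand $\mathbb{C}[\mu_5]$ by the single generator $e$.

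I expect the main obstacle to be bookkeeping of the scalar twists: the potential $W = \bm b\bm a\bm c - q^4 \bm b\bm c\bm a$ has the root-of-unity factors $q^4, 1, q, q^2, q^3$ absorbed into $\bm b$, and one must track how these interact with the idempotent decomposition so that the summed relations come out with exactly the constants $\overline q_{ij}$ of \eqref{eq:local-quan} rather than some conjugate or permuted set. Getting the relations between $e$ and $u,v,w$ to match the first row and column of \eqref{eq:local-quan} — as opposed to their inverses or a cyclic relabeling — requires choosing the primitive root $\zeta$ defining $e$ correctly, and this is the one place where a sign or exponent error would be easy to make; everything else is a routine verification that $\partial_a W = 0$ reproduces the displayed $q$-commutation relations and a free-module basis count.
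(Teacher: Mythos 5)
Your overall strategy coincides with the paper's: define $e$ as a linear combination of the vertex idempotents with fifth-root-of-unity coefficients, define $u,v,w$ by summing arrows, check the quantum commutation relations, use a Vandermonde argument to recover each $e_i$ from the powers of $e$ (so the map is onto), and close with an injectivity/basis argument. In fact you are slightly \emph{more} careful than the paper on the last point, since the paper simply asserts that $\mathbb{C}\langle e,u,v,w\rangle_{(\overline{q}_{ij})}/(e^5-1)$ sits inside $\Jac(Q,W)$ as a subalgebra and only proves surjectivity.

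The substantive problem is your choice $v=\bm{b}$. With $\bm{b}=q^4b_0+b_1+qb_2+q^2b_3+q^3b_4=\sum_i q^{i-1}b_i$, a direct computation using the Jacobi relation $a_ib_{i+1}=q^4b_{i+4}a_{i+1}$ gives $u\bm{b}=q\,\bm{b}u$, \emph{not} $u\bm{b}=q^4\bm{b}u=\overline{q}_{23}\,\bm{b}u$; likewise, using $b_ic_{i+2}=q^3c_{i+1}b_{i+2}$, one finds $\bm{b}w=q\,w\bm{b}$, not $q^3 w\bm{b}=\overline{q}_{34}\,w\bm{b}$. So your surjection lands in a quantum polynomial ring with the \emph{wrong} parameters, and the claimed presentation with $(\overline{q}_{ij})$ does not follow. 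The correct choice, which the paper uses, is the \emph{untwisted} sum $v=b_0+b_1+b_2+b_3+b_4$, and this does produce $uv=q^4vu$ and $vw=q^3wv$ (the choice of $\zeta$ in $e$ is independent of this and does not fix the issue, since the $uv$ and $vw$ relations do not involve $e$). The twisted element $\bm{b}$ only appears in the paper as the generator called ``$v$'' in the subsequent remark's crossed-product description $\mathbb{C}\langle u,v,w\rangle_q\rtimes\mu_5$, which is a genuinely different presentation with genuinely different commutation constants. You anticipated precisely this bookkeeping pitfall and stated the relation should come out to $\overline{q}_{ij}$ without verifying it; unfortunately the verification fails for your choice of $v$.
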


\begin{proof}
Consider the element $e=e_0+qe_1+q^2e_2+q^3e_3+q^4e_4\in\mathbb{C}Q$, where $e_i$ is the idempotent corresponding to the vertex $i$, and $u = a_0+a_1+a_2+a_3+a_4$, $v = b_0+b_1+b_2+b_3+b_4$, and $w = c_0+c_1+c_2+c_3+c_4$. It is clear that $e^5=1$ and the elements $e,u,v,w$ satisfy the appropriated $q$-commuting relations. Therefore $\mathbb{C}\langle e,u,v,w\rangle_{(\overline{q}_{ij})}\big/(e^5-1)$ is naturally a subalgebra of $\Jac(Q,W)$. 

To show that they are isomorphic, it is sufficient to show that the element $e$ generates $e_i$ for all $i$. For each $k$, we have $e^k=e_0+q^ke_1+q^{2k}e_2+q^{3k}e_3+q^{4k}e_4$. So
\[
\begin{pmatrix}
1 \\ e \\ e^2 \\ e^3 \\ e^4
\end{pmatrix}=
\begin{pmatrix}
1 & 1 & 1 & 1 & 1 \\
1 & q & q^2 & q^3 & q^4 \\
1 & q^2 & q^4 & q & q^3 \\
1 & q^3 & q & q^4 & q^2 \\
1 & q^4 & q^3 & q^2 & q
\end{pmatrix}\begin{pmatrix}
e_0 \\ e_1 \\ e_2 \\ e_3 \\ e_4
\end{pmatrix}.
\]
The matrix in the middle is a Vandermonde matrix, which is invertible. This completes the proof.
\end{proof}

\begin{remark}
One alternative description of the Jacobi algebra $\Jac(Q,W)$ is as follows. Consider the algebra
\[
\mathbb{C}\langle u,v,w\rangle_q:=\mathbb{C}\langle u,v,w\rangle\big/\left(uv-q^4vu, vw-q^4wv, wu-q^4wu \right),
\]
which is the Jacobi algebra of a quantized affine $3$-space (\cite{CMPS}). Let $G=\mu_5$ with an action on $\mathbb{C}\langle u,v,w\rangle_q$ defined by
\[
q\cdot(u,v,w)=(q^3u,q^4v,q^3w).
\]
Then the Jacobi algebra $\Jac(Q,W)$ is isomorphic to the crossed product $\mathbb{C}\langle u,v,w\rangle_q\rtimes\mu_5$, note that here the $v$ corresponds to $q^4b_0+b_1+qb_2+q^2b_3+q^3b_4$. From this perspective, the Jacobi algebra $\Jac(Q,W)$ is a quantization of the orbifold $[\mathbb{C}^3/\mu_5]$. 
\end{remark}

The Jacobi algebra $\Jac(Q,W)$ contains a subalgebra
\[
\mathbb{C}[x,y,z]:=\mathbb{C}[u^5,v^5,w^5]\subset Z(\Jac(Q,W))
\]
and is a finite $\mathbb{C}[x,y,z]$-module. Thus $\Jac(Q,W)$ can be regarded as a sheaf $\sh{J}$ of non-commutative algebras on $\mathbb{C}^3=\Spec\,\mathbb{C}[x,y,z]$. There is a canonical embedding
\begin{equation}\label{eq:can-chart}
    U_{01}\hookrightarrow\mathbb{C}^3, \left(\frac{x_2}{x_0},\frac{x_3}{x_0}, \frac{x_4}{x_0}\right)=(x,y,z) 
\end{equation}
which maps the special point $p_{01}$ to the origin. For any subset $U\subset U_{01}$, we will simply identify it with its image in $\mathbb{C}^3$ without further comment.

\begin{theorem}
For any point $p\in U_{01}$, there is an \emph{analytic} open neighborhood $U\subset U_{01}$ of $p$ such that there is a (non-unique) isomorphism
\[
\sh{A}|_U\cong\sh{J}|_U
\]
of sheaves of non-commutative algebras on $U$.
\end{theorem}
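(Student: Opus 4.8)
The plan is to compare the two sheaves of algebras étale-locally (equivalently, analytically-locally) on $\mathbb{C}^3$, using the fact that both $\sh{A}|_{U_{01}}$ and $\sh{J}|_{U_{01}}$ become matrix algebras over their centers away from suitable loci, and to show that their centers and their Azumaya/ramification data agree. First I would identify the center of $\sh{J}=\Jac(Q,W)$: by the previous lemma $\Jac(Q,W)\cong\mathbb{C}\langle e,u,v,w\rangle_{(\overline{q}_{ij})}/(e^5-1)$, so one computes directly that the center contains $\mathbb{C}[u^5,v^5,w^5]=\mathbb{C}[x,y,z]$ and, away from the coordinate hyperplanes $xyz\neq 0$, is exactly this; over the open set where $xyz$ is invertible the algebra $\mathbb{C}\langle e,u,v,w\rangle_{(\overline{q}_{ij})}/(e^5-1)$ localizes to an Azumaya algebra of rank $25$ (a quantum torus with all parameters roots of unity, tensored with the twisted group algebra $\mathbb{C}[\mu_5]$). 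In parallel, I would revisit the algebra $A$ of \eqref{eq:local-algebra}: the center of $A$ contains the degree-zero invariants $\mathbb{C}[(x_2/x_0)^5,\ldots]$ after the substitution $u_i=(t_it_0^4)/t_0^5$, and away from the discriminant locus $A$ is again Azumaya of rank $25$ over $\mathbb{C}^3$ via \eqref{eq:can-chart}.

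The key step is then a \emph{Brauer-class / Morita} comparison: two Azumaya algebras over a base (here an analytic neighborhood $U$) are isomorphic as sheaves of algebras iff they have the same rank and their classes in the (analytic) Brauer group agree, and over a small contractible $U$ the analytic Brauer group vanishes, so any two rank-$25$ Azumaya algebras on $U$ are isomorphic. More precisely, I would argue in two stages. Over the open dense locus $p\in U$ with $U\subset U_{01}\cap(xyz\neq 0)$, both restrictions are Azumaya of rank $25$, so they are abstractly isomorphic on a small enough $U$; the content is to extend this across the coordinate hyperplanes. For a point $p$ lying on exactly one, two, or all three of the hyperplanes $\{x=0\},\{y=0\},\{z=0\}$, I would exhibit the local structure of each algebra explicitly: setting (say) $v$ invertible but $u,w$ in the maximal ideal, $\mathbb{C}\langle e,u,v,w\rangle_{(\overline{q}_{ij})}/(e^5-1)$ becomes, up to Morita equivalence, $\mathbb{C}[u,w]\rtimes\mu_5$-type algebra with the relevant cyclic action, and the same normal form must be produced from $A$ by choosing the right analytic coordinates — this is the crossed-product description in the Remark, restricted to the appropriate stratum.

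The main obstacle I expect is precisely this extension across the ramification locus: on the hyperplanes $u=0$ (etc.) the algebras are no longer Azumaya, so the clean Brauer-group argument fails there and one must instead produce an honest isomorphism of non-commutative algebras compatibly with the center, i.e. match generators and relations after an analytic change of coordinates on $U_{01}$. Concretely I would (i) compute the fiber algebras $\sh{A}\otimes_{\sh{O}}k(p)$ and $\sh{J}\otimes_{\sh{O}}k(p)$ and check they are isomorphic for every $p$ (this reduces to the simple-module and Ext-quiver analysis of the previous section, which already shows the deformation-theoretic data coincide), then (ii) lift the fiberwise isomorphism to a formal/analytic neighborhood using that $\sh{J}$ is, up to Morita equivalence, the Jacobi algebra of a quiver with potential and hence its deformations near $p$ are controlled by the same quiver-with-potential $(Q,W)$ that was extracted from $\sh{A}$ at $p_{01}$ in the previous section. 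The translation-equivariance of $(Q,W)$ (the $\mu_5$-symmetry permuting the $E_i$) lets one move the special point $p_{01}$ to the origin and handle the remaining strata $x=0$, $y=0$, $z=0$ by the same computation after relabeling $u,v,w$; away from all three hyperplanes the Azumaya argument finishes the proof. The routine but lengthy part is checking that the $q$-commuting relations on $A$, after the coordinate change \eqref{eq:can-chart} and an analytic rescaling of the $u_i$, match $(\partial_a W)_a$ on the nose, which I would not grind through here.
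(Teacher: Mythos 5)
Your proposal does not take the paper's route, and it does not complete the proof. The paper's argument is essentially a one-line observation: after pulling both sheaves of algebras back via the chart $U_{01}\hookrightarrow\mathbb{C}^3$, both $\sh{A}|_U$ and $\sh{J}|_U$ are presented explicitly as free $\sh{O}_U$-algebras on the same set of $q$-commuting generators and with the same relations $u_2^5=x,\,u_3^5=y,\,u_4^5=z$ (respectively $u^5=x,\,v^5=y,\,w^5=z$); the only difference is the relation $1+\sum_k u_k^5=0$ on the $\sh{A}$-side versus $e^5=1$ on the $\sh{J}$-side, i.e.\ $u_1^5=-1-x-y-z$ versus $e^5=1$. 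On any analytic $U$ small enough that $-1-x-y-z$ admits a holomorphic $5$-th root $f$, the assignment $u_1\mapsto f\cdot e$ (and $u_2,u_3,u_4\mapsto u,v,w$) is a well-defined algebra map — $f$ is central so the $q$-relations are untouched, and $(fe)^5=f^5=-1-x-y-z$ matches — and it is invertible because $f$ is non-vanishing. That is the whole proof; there is no Azumaya locus, no Brauer-class comparison, and no ramification to cross.

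The gap in your proposal is exactly where you say ``the content is to extend this across the coordinate hyperplanes.'' Over the open Azumaya locus a Brauer-group argument does give a \emph{non-canonical} isomorphism on a small $U$, but your step (ii) — lifting a fiberwise isomorphism across the ramification locus via deformation theory of the quiver with potential — is not an argument, it is a plan; having isomorphic fibers and the same Ext-quiver/potential data at a point does not by itself produce an isomorphism of sheaves of algebras on a neighborhood (there can be nontrivial automorphisms and gluing data to control), and you explicitly decline to carry out the generator-and-relations check that would settle it. Moreover, the ``translation-equivariance of $(Q,W)$ via the $\mu_5$-symmetry permuting the $E_i$'' does not move points of $\mathbb{C}^3$: that symmetry permutes the simple modules over the \emph{fixed} point $p_{01}$, it does not act on the base. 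In short, the machinery you invoke is both heavier than needed and, where it matters, not actually executed; the paper sidesteps all of it by writing down an explicit change of generator using a local $5$-th root.
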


\begin{proof}
Both sheaves $\sh{A}$ and $\sh{J}$ are locally free of rank $625$, and we can write them down explicitly,
\[
\sh{A}|_U=\sh{O}_U\langle u_1,u_2,u_3,u_4\rangle_{(\overline{q}_{ij})}\big/\left( 1+\sum_{k=1}^4 u_k^5, u_2^5-x, u_3^5-y, u_4^5-z\right)
\]
and
\[
\sh{J}|_U=\sh{O}_U\langle e,u,v,w\rangle_{(\overline{q}_{ij})}\big/\left(e^5-1, u^5-x, v^5-y, w^5-z\right),
\]
where $x,y,z\in H^0(U,\sh{O}_U)$ are (holomorphic) functions corresponding to the coordinates of $U\subset\mathbb{C}^3$.

Suppose $U\subset\mathbb{C}^3$ is an analytic open subset such that $5$-th roots of the holomorphic function $-1-x-y-z$ are well-defined, that is, there exists an element
\[
f(x,y,z)\in H^0(U,\sh{O}_U)\text{ such that }f(x,y,z)^5=-1-x-y-z.
\]
Then we define a morphism of sheaves of non-commutative algebras
\[
\sh{A}|_U\to\sh{J}|_U, (u_1,u_2,u_3,u_4)\mapsto \big(f(x,y,z)e, u, v, w\big).
\]
This defines an isomorphism since $f(x,y,z)$ is non-vanishing on $U$ and thus is invertible.

Finally, $U_{01}$ is the open subset of $\mathbb{C}^3$ defined by $x+y+z\neq -1$. Therefore it can be covered by analytic open subsets satisfying the property above.
\end{proof}

Next, we analyze the Jacobi algebra $\Jac(Q,W)$ in more details.

\begin{proposition}\label{prop:local-of-J}
Let $p=(x_0,x_1,x_2)\in\mathbb{C}^3$ with $x_0\neq 0$. Then there is an analytic open neighborhood $U\subset\mathbb{C}^3$ of $p$ such that
\[
\sh{J}|_U\cong M_{5}(\mathbb{C})\tens{\mathbb{C}}\Big(\sh{O}_U[v,w]\big/\left(v^5-y, w^5-x\right)\Big),
\]
where $M_5(\mathbb{C})$ is the ring of $5$-by-$5$ matrices. Similar results also hold for points with $y_0\neq 0$ or $z_0\neq 0$.
\end{proposition}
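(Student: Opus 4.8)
The plan is to localize the central subalgebra first and then trivialize the non-commutativity using a root-of-unity trick, exactly parallel to the proof of the preceding theorem. Recall from the previous lemma that
\[
\sh{J}|_U=\sh{O}_U\langle e,u,v,w\rangle_{(\overline{q}_{ij})}\big/\left(e^5-1, u^5-x, v^5-y, w^5-z\right),
\]
and that $\mathbb{C}[x,y,z]=\mathbb{C}[u^5,v^5,w^5]$ lies in the center. Since $p=(x_0,x_1,x_2)$ has $x_0\neq 0$, I would first shrink to an analytic neighborhood $U$ on which the holomorphic function $x$ admits a fifth root, i.e.\ there exists $g(x,y,z)\in H^0(U,\sh{O}_U)$ with $g^5=x$ and $g$ non-vanishing. (This is possible because $x\ne 0$ near $p$, so $\log x$ and hence $x^{1/5}$ is well-defined on a small enough simply connected analytic open set; this is the same covering argument used at the end of the previous theorem's proof.) Replacing $u$ by $g^{-1}u$ we may then assume, after this change of generators, that $u^5=1$ on $U$, so that $\sh{J}|_U\cong \sh{O}_U\langle e,u,v,w\rangle_{(\overline{q}_{ij})}\big/(e^5-1, u^5-1, v^5-y, w^5-z)$.

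Next I would exhibit the matrix algebra factor. The two generators $e$ and $u$ each satisfy $e^5=u^5=1$, and from the quantum-parameter matrix \eqref{eq:local-quan} one reads off that $e$ and $u$ $q$-commute with each other by a primitive fifth root of unity while $v,w$ $q$-commute with $e,u$ as well. The subalgebra $\mathbb{C}\langle e,u\rangle/(e^5-1,u^5-1, eu-\zeta ue)$ for $\zeta\in\mu_5$ primitive is the standard presentation of $M_5(\mathbb{C})$ (a finite-dimensional quotient of the quantum torus at a root of unity, equivalently the twisted group algebra $\mathbb{C}^\zeta[\mathbb{Z}/5\times\mathbb{Z}/5]$, which is the algebra of the Heisenberg/Weyl pair of order $5$). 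I would then show $\sh{J}|_U$ is the tensor product of this $M_5(\mathbb{C})$ with the commutant of $\{e,u\}$ inside $\sh{J}|_U$. The commutant is generated over $\sh{O}_U$ by suitable monomials $v' = v\cdot(\text{monomial in }e,u)$ and $w' = w\cdot(\text{monomial in }e,u)$ chosen so that $v'$ and $w'$ commute with both $e$ and $u$; since the pairing of $\mu_5$-weights is a perfect pairing, such correcting monomials exist and are unique. One checks $v'$ and $w'$ still satisfy $(v')^5 = y\cdot(\text{unit})$, $(w')^5 = x\cdot(\text{unit})$ (the units being $5$-th roots of unity absorbed harmlessly, or one rescales $y,z$ by automorphisms of $U$), and that $v'$ and $w'$ commute with each other because the weight of $v$ and the weight of $w$ pair trivially — this must be verified from \eqref{eq:local-quan}, and it is the one genuinely computational point. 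This yields
\[
\sh{J}|_U\cong M_5(\mathbb{C})\tens{\mathbb{C}}\Big(\sh{O}_U[v',w']\big/\left((v')^5-y,\ (w')^5-x\right)\Big),
\]
which is the claimed form after renaming $v',w'$ back to $v,w$. The cases $y_0\neq 0$ or $z_0\neq 0$ follow by the evident symmetry of roles of $u,v,w$, using the cyclic symmetry of the potential $W$ noted in the construction of $(Q,W)$.

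The main obstacle I anticipate is purely bookkeeping: verifying from the explicit $4\times 4$ block of \eqref{eq:local-quan} — which records the commutation of $u_2,u_3,u_4$, i.e.\ of $u,v,w$ — together with the weights $q\cdot(u,v,w)=(q^3u,q^4v,q^3w)$ of the $\mu_5$-action (so that $e$ interacts with $u,v,w$ through these weights), that (i) $e$ and $u$ genuinely $q$-commute by a \emph{primitive} fifth root, so that $\mathbb{C}\langle e,u\rangle$ with the two order-$5$ relations is all of $M_5(\mathbb{C})$ and not a proper subalgebra or a non-semisimple quotient; (ii) the correcting monomials for $v,w$ can be chosen to lie in $\mathbb{C}\langle e,u\rangle$ and to land in the commutant; and (iii) the corrected $v'$ and $w'$ commute with each other. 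Each is a short character/weight computation on $\mu_5\times\mu_5$, but it is where a sign or exponent error would hide, so I would set it up as a single lemma about perfect pairings of the group $\mathbb{Z}/5$ and then apply it uniformly. Everything else — the analytic fifth-root extraction, the tensor-decomposition of an algebra over its central matrix subalgebra (a form of the Skolem--Noether / double-commutant statement for Azumaya algebras), and the reduction to a small neighborhood — is standard.
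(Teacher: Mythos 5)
Your proposal matches the paper's proof step for step: extract an analytic fifth root of $x$ so $u$ can be rescaled to satisfy $u^5=1$; observe that $\mathbb{C}\langle e,u\rangle$ with $eu=q^3ue$ and $e^5=u^5=1$ is the cyclic presentation of $M_5(\mathbb{C})$; and correct $v,w$ by monomials in $e,u$ (namely $e^3u^2v$ and $e^3u^4w$) so they become central, giving the tensor decomposition --- the paper simply does this bookkeeping explicitly rather than invoking the Azumaya double-commutant. The one slip is your stated reason for $v'$ and $w'$ commuting: the conjugation weights of $v$ and $w$ under $(e,u)$ do \emph{not} pair trivially; they commute because the correcting monomials in $e,u$ themselves $q$-commute and contribute exactly compensating phases, which is the ``one genuinely computational point'' you correctly flagged as needing verification.
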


\begin{proof}
Recall that 
\[
\sh{J}|_U=\sh{O}_U\langle e,u,v,w\rangle_{(\overline{q}_{ij})}\big/\left(e^5-1, u^5-x, v^5-y, w^5-z\right),
\]
Since $x_0\neq 0$, we can choose an analytic open neighborhood $U$ of $p$ such that there exists a holomorphic function $f(x,y,z)$ on $U$ such that $f(x,y,z)^5=x$. We consider a change of coordinates
\[
e_1=e,\quad e_2=\frac{u}{f},\quad \overline{v}=(e_1^3e_2^2)v,\quad \overline{w}=(e_1^4e_2^3)w.
\]
Then $\sh{J}|_{U}$ is generated by $e_1,e_2,\overline{v},\overline{w}$ since $e_2^5=1$. A straightforward computation shows that the elements $\overline{v},\overline{w}$ are, in fact, lying in the center $Z(\sh{J}|_U)$.

Consequently, we can write
\[
\begin{split}
   \sh{J}|_U &=\sh{O}_U\langle e_1,e_2\rangle_{(q_{12})}[\overline{v},\overline{w}]\big/\left(e_1^5-1, e_2^5-1, \overline{v}^5-y, \overline{w}^5-z \right) \\
   &\cong\Big(\mathbb{C}\langle e_1,e_2\rangle_{(q_{12})}\big/(e_1^5-1,e_2^5-1) \Big)\tens{\mathbb{C}}\Big(\sh{O}_U[\overline{v},\overline{w}]\big/(\overline{v}^5-y, \overline{w}^5-z)\Big).
\end{split}
\]
To see that the finite-dimensional Frobenius algebra 
\[
\mathbb{C}\langle e_1,e_2\rangle\big/\left(e_1e_2-q^3e_2e_1, e_1^5-1,e^2-1\right)
\]
is isomorphic to $M_5(\mathbb{C})$, one may verify that the morphism defined by
\[
e_1\mapsto\begin{pmatrix}
1 & 0 & 0 & 0 & 0 \\
0 & q & 0 & 0 & 0 \\
0 & 0 & q^2 & 0 & 0 \\
0 & 0 & 0 & q^3 & 0 \\
0 & 0 & 0 & 0 & q^4
\end{pmatrix},\quad e_2\mapsto\begin{pmatrix}
0 & 0 & 1 & 0 & 0 \\
0 & 0 & 0 & 1 & 0 \\
0 & 0 & 0 & 0 & 1 \\
1 & 0 & 0 & 0 & 0 \\
0 & 1 & 0 & 0 & 0
\end{pmatrix}
\]
is an isomorphism.
\end{proof}

\begin{remark}\label{rmk:about-J}
We may consider the finite covering $\pi:\mathbb{C}^3\to\mathbb{C}^3$, $\pi(x,y,z)=(x,y^5,z^5)$. Then for any $U\subset\mathbb{C}^3$, we have
\[
\begin{split}
    & M_5(\mathbb{C})\tens{\mathbb{C}}\Big(\sh{O}_U[v,w]\big/(v^5-y,w^5-z)\Big) \\
    \cong &\, M_5(\mathbb{C})\tens{\mathbb{C}}\pi_*\sh{O}_{\pi^{-1}(U)}=\pi_*\Big(M_5(\mathbb{C})\tens{\mathbb{C}}\sh{O}_{\pi^{-1}(U)}\Big)\\
    =&\, \pi_*\Big( M_5(\sh{O}_{\mathbb{C}^3})|_{\pi^{-1}(U)}\Big).
\end{split}
\]
This gives an equivalence between coherent $\sh{J}$-modules on $U$ and coherent $M_5(\sh{O}_{\mathbb{C}^3})$-modules on $\pi^{-1}(U)$.
\end{remark}

Now we consider a stratification
\[
\mathbb{C}^3=\mathbb{C}^3_{(0)}\coprod\mathbb{C}^3_{(1)}\coprod\mathbb{C}^3_{(2)}\coprod\mathbb{C}^3_{(3)},
\]
where $\mathbb{C}^3_{(i)}$ consists of points with exactly $i$ coordinates being non-zero. Let $p(x_0,y_0,z_0)\in\mathbb{C}^3_{(2)}$. For simplicity we assume $z_0=0$, then Proposition~\ref{prop:local-of-J} shows that there is an analytic neighborhood $U$ of $p$ such that
\[
\sh{J}|_U\cong \Big(M_{5}(\mathbb{C})^{\oplus 5}\Big)\tens{\mathbb{C}}\Big(\sh{O}_U[w]\big/\left(w^5-z\right)\Big).
\]
Similarly, for point $p\in \mathbb{C}^3_{(3)}$, there is an analytic neighborhood $U$ of $p$ such that
\[
\sh{J}|_U\cong \Big(M_{5}(\mathbb{C})^{\oplus 25}\Big)\tens{\mathbb{C}}\sh{O}_U.
\]

Finally, we recall that there is an open cover $\{U_{ij}\}_{i\neq j}$ such that all sheaves $\sh{A}|_{U_{ij}}$ of non-commutative algebras are (canonically) isomorphic. There is a natural stratification
\begin{equation}\label{eq:strat-X}
    X=X_{(0)}\coprod X_{(1)}\coprod X_{(2)}\coprod X_{(3)},
\end{equation}
where $X_{(i)}$ consists of points with exactly $i+2$ coordinates (of $\mathbb{P}^4$) being non-zero. It is clear that the canonical isomorphism $U_{01}\cong U_{ij}$ and the canonical embedding \eqref{eq:can-chart} preserve the strata.

\begin{definition}
Let $p\in X$. An \emph{analytic chart} $U$ of $p$ is an analytic open neighborhood $U\subset X$ of $p$ with an embedding $U\to\mathbb{C}^3$ mapping $p$ to the origin.
\end{definition}

Putting all above results together, we obtain the following theorem.

\begin{theorem}\label{thm:local-model}
We define the sheaves of non-commutative algebras on $\mathbb{C}^3$
\[
\begin{split}
    \sh{J}_{(0)} &= \sh{J}, \\
    \sh{J}_{(1)} &= M_{5}(\mathbb{C})\tens{\mathbb{C}}\Big(\sh{O}_{\mathbb{C}^3}[v,w]\big/\left(v^5-y, w^5-z\right)\Big), \\
    \sh{J}_{(2)} &= \Big(M_{5}(\mathbb{C})^{\oplus 5}\Big)\tens{\mathbb{C}}\Big(\sh{O}_{\mathbb{C}^3}[w]\big/\left(w^5-z\right)\Big), \\
    \sh{J}_{(3)} &= \Big(M_{5}(\mathbb{C})^{\oplus 25}\Big)\tens{\mathbb{C}}\sh{O}_{\mathbb{C}^3}. \\
\end{split}
\]
Then for all $i$ and any point $p\in X_{(i)}$, there is an analytic chart $U\to\mathbb{C}^3$ of $p$ such that
\[
\sh{A}|_{U}\cong\sh{J}_{(i)}|_{U}.
\]
Moreover, if $p\in X_{(1)}$, then the chart $U\to\mathbb{C}^3$ maps $U\cap X_{(1)}$ to the locus $(y=z=0)$; and if $p\in X_{(2)}$, then the chart $U\to\mathbb{C}^3$ maps $U\cap X_{(2)}$ to the locus $(z=0)$.
\end{theorem}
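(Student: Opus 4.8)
The plan is to assemble Theorem~\ref{thm:local-model} from the pieces already established, being careful about the two refinements concerning strata near the boundary. First I would recall that the open cover $\{U_{ij}\}_{i\neq j}$ of $X$ satisfies: each $\sh{A}|_{U_{ij}}$ is canonically isomorphic to $A$ (Lemma on the affine cover), and via the canonical embedding \eqref{eq:can-chart} we identify $U_{01}$ with the open subset $(x+y+z\neq-1)\subset\mathbb{C}^3=\Spec\,\mathbb{C}[x,y,z]$, with $p_{01}\mapsto 0$. The earlier theorem on $\sh{A}|_U\cong\sh{J}|_U$ gives, for every $p\in U_{01}$, an analytic chart $U$ of $p$ with $\sh{A}|_U\cong\sh{J}_{(0)}|_U$; combined with the canonical isomorphisms $U_{01}\cong U_{ij}$ this already handles \emph{all} points of $X$ at the coarsest level, i.e.\ shows $\sh{A}|_U\cong\sh{J}_{(0)}|_U$ locally everywhere. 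The point of the theorem, however, is that over the higher strata one can do better: the local model degenerates (up to Morita-type simplification) to the matrix algebras $\sh{J}_{(1)},\sh{J}_{(2)},\sh{J}_{(3)}$.

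The main step is therefore to combine the two ``$\sh{J}$-simplification'' results. Proposition~\ref{prop:local-of-J} says: if $p=(x_0,y_0,z_0)\in\mathbb{C}^3$ has at least one nonzero coordinate, there is an analytic neighborhood on which $\sh{J}$ becomes $M_5(\mathbb{C})$ tensored with a commutative chart (with the two remaining $5$-th-root covers), and the statement following it iterates this to get $M_5(\mathbb{C})^{\oplus 5}$ when two coordinates are nonzero and $M_5(\mathbb{C})^{\oplus 25}$ when all three are. So for $p\in X_{(i)}$ with $i\geq 1$, I would first pass to a chart $U_{jk}$ containing $p$, use the canonical isomorphism to reduce to $U_{01}$, use the earlier theorem to replace $\sh{A}|_U$ by $\sh{J}|_U=\sh{J}_{(0)}|_U$, and then shrink $U$ further and apply Proposition~\ref{prop:local-of-J} (respectively its iterated form) to obtain $\sh{J}_{(i)}|_U$. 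The bookkeeping to check is purely which coordinates among $x,y,z$ on the chart are nonzero at $p$: by the observation right before the definition of analytic chart, the canonical isomorphisms and the canonical embedding preserve the strata, and on the $\mathbb{C}^3$ side $X_{(i)}$ corresponds to $\mathbb{C}^3_{(i)}$, i.e.\ exactly $i$ of the three coordinates nonzero. That dictates which of Proposition~\ref{prop:local-of-J}'s three cases applies and hence which $\sh{J}_{(i)}$ appears.

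For the ``moreover'' clause I would track, in the proof of Proposition~\ref{prop:local-of-J}, which coordinate functions were inverted: when $p\in X_{(1)}$ exactly one coordinate (say $x$) is nonzero at $p$, the chart is chosen so that $x$ is a unit, and the remaining two coordinates $y,z$ vanish precisely on $U\cap X_{(1)}$ — so after the change of variables the stratum becomes $(y=z=0)$, matching the form of $\sh{J}_{(1)}$. Similarly for $p\in X_{(2)}$, two coordinates are units and the third, $z$, cuts out $U\cap X_{(2)}$, giving the locus $(z=0)$ and the form of $\sh{J}_{(2)}$. This is just a matter of naming the coordinates consistently with the displayed formulas for $\sh{J}_{(1)},\sh{J}_{(2)},\sh{J}_{(3)}$, possibly composing with a permutation of $(x,y,z)$, which is an automorphism of $\mathbb{C}^3$ fixing the origin and hence a legitimate change of analytic chart.

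The only genuine subtlety — the part I expect to need the most care — is the compatibility of all the ``up to permutation / up to change of primitive root $q$'' ambiguities: the isomorphism $U_{01}\cong U_{ij}$ may require a permutation $\sigma\in S_5$ and a change of primitive root (Lemma on the affine cover), and one must check this does not disturb the stratification statement or introduce an inequivalent $\sh{J}_{(i)}$. Since all the $\sh{J}_{(i)}$ for $i\geq 1$ are built from $M_5(\mathbb{C})$ and genuinely commutative charts, they are insensitive to the choice of $q$; and the permutation of the $\mathbb{P}^4$-coordinates induces a permutation of $(x,y,z)$ on the chart, which as noted is harmless. So the argument reduces to: (i) reduce to $U_{01}$; (ii) apply the earlier $\sh{A}\cong\sh{J}$ theorem; (iii) apply Proposition~\ref{prop:local-of-J} in the case dictated by the stratum; (iv) rename coordinates to match the displayed $\sh{J}_{(i)}$ and read off the ``moreover'' clause. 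I would write it as a short synthesis paragraph citing these three inputs rather than repeating any computation.

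\begin{proof}
By the lemma on the affine cover, the canonical isomorphisms $U_{01}\cong U_{ij}$ carry $\sh{A}|_{U_{ij}}$ to $A$ (up to a permutation $\sigma\in S_5$ of coordinates and a possible change of the primitive root $q\in\mu_5$), and under the canonical embedding \eqref{eq:can-chart} we identify $U_{01}$ with $(x+y+z\neq-1)\subset\mathbb{C}^3$, with $p_{01}\mapsto 0$ and $\sh{A}|_{U_{01}}$ identified with $\sh{J}=\sh{J}_{(0)}$ on analytic charts by the theorem comparing $\sh{A}|_U$ and $\sh{J}|_U$. As observed before the definition of analytic chart, these identifications preserve strata, and $X_{(i)}$ corresponds to $\mathbb{C}^3_{(i)}$, i.e.\ to points with exactly $i$ of the coordinates $x,y,z$ nonzero.

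Let $p\in X_{(i)}$. Choose $j\neq k$ with $p\in U_{jk}$, and use the canonical isomorphism $U_{jk}\cong U_{01}$ to view $p$ as a point of $U_{01}\subset\mathbb{C}^3$; after a permutation of $(x,y,z)$ (an automorphism of $\mathbb{C}^3$ fixing the origin, hence a change of analytic chart) we may assume that the nonzero coordinates of $p$ are the first $i$ ones. Shrinking to an analytic chart $U$ of $p$, the theorem comparing $\sh{A}$ and $\sh{J}$ gives $\sh{A}|_U\cong\sh{J}|_U=\sh{J}_{(0)}|_U$.

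If $i=0$ this is the assertion. If $i\geq 1$, shrink $U$ further and apply Proposition~\ref{prop:local-of-J} (for $i=1$), respectively its iterated form stated after it (for $i=2,3$): choosing holomorphic $5$-th roots of the nonzero coordinate functions on $U$ and performing the change of variables in the proof of Proposition~\ref{prop:local-of-J}, the factor $\mathbb{C}\langle e_1,e_2\rangle_{(q_{12})}/(e_1^5-1,e_2^5-1)\cong M_5(\mathbb{C})$ splits off for each nonzero coordinate, leaving
\[
\sh{J}|_U\cong\big(M_5(\mathbb{C})^{\oplus 5^{\,i-1}}\big)\tens{\mathbb{C}}\Big(\sh{O}_U[\,\cdots\,]/(\cdots)\Big),
\]
where the commutative factor is $\sh{O}_U[v,w]/(v^5-y,w^5-z)$ for $i=1$, is $\sh{O}_U[w]/(w^5-z)$ for $i=2$, and is $\sh{O}_U$ for $i=3$. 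These are exactly $\sh{J}_{(i)}|_U$. Since the $\sh{J}_{(i)}$ with $i\geq 1$ are built from $M_5(\mathbb{C})$ and honest commutative charts, they are insensitive to the choice of primitive root $q$ used along the way.

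Finally, for the last claim, if $p\in X_{(1)}$ then in the chosen chart exactly the coordinate $x$ is nonzero at $p$, and it is $x$ that is inverted in Proposition~\ref{prop:local-of-J}; the remaining coordinates $y,z$ are unaffected by the change of variables, so $U\cap X_{(1)}$ is carried to $(y=z=0)$, matching the displayed form of $\sh{J}_{(1)}$. Likewise, if $p\in X_{(2)}$ then $x,y$ are inverted while $z$ is untouched, so $U\cap X_{(2)}$ is carried to $(z=0)$, matching $\sh{J}_{(2)}$.
\end{proof}
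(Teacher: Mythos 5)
Your synthesis is essentially the paper's intended argument — the paper offers no formal proof, only "Putting all above results together," and you correctly assemble the earlier theorem $\sh{A}|_U\cong\sh{J}|_U$, Proposition~\ref{prop:local-of-J} and its iterated form, and the observation that the canonical isomorphisms and embedding preserve the strata. One small misstatement to fix: the $M_5(\mathbb{C})$ tensor factor $\mathbb{C}\langle e_1,e_2\rangle_{(q_{12})}/(e_1^5-1,e_2^5-1)$ splits off only \emph{once} (from the non-commutativity of $e_1=e$ with $e_2=u/f$); a second or third nonzero coordinate instead contributes a $\mathbb{C}^{\oplus 5}$ \emph{direct-sum} factor via $\sh{O}_U[v]/(v^5-y)\cong\sh{O}_U^{\oplus 5}$ after taking a holomorphic $5$-th root of the unit $y$ — which is why the answer is $M_5(\mathbb{C})^{\oplus 5^{i-1}}$ rather than $M_{5^i}(\mathbb{C})$, and your displayed conclusion is nevertheless correct.
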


\begin{remark}\label{rmk:chart}
From the construction of the chart $U\to\mathbb{C}^3$, we see that if $p\in X_{(1)}$, then the chart maps $U\cap X_{(1)}$ to the locus $(y=z=0)$. Similarly, if $p\in X_{(2)}$, then the chart maps $U\cap X_{(2)}$ to the locus $(z=0)$. Besides, since the charts are constructed to make $5$-th roots of certain holomorphic function well-defined, we can choose finitely many charts to cover $X_{(i)}$'s for each $i$.
\end{remark}

In particular, if $p\in X_{(i)}$, then the category $\Coh(\sh{A})_p$ of coherent $\sh{A}$-modules supported at $p$ is equivalent to the category of $\sh{J}_{(i)}$-modules supported at the origin.

\begin{corollary}
Let $p\in X_{(i)}$, $i\neq 0$. There are $5^{i-1}$ simple $\sh{A}$-modules supported at $p$ and all of them are of dimension $5$. Furthermore, the Ext-quiver associated to these simple modules is the quiver consisting of $5^{i-1}$ vertices, and three loops at each vertex.
\end{corollary}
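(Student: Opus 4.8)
The plan is to transport the statement, via Theorem~\ref{thm:local-model}, to the local model $\sh{J}_{(i)}$, and then, via a Morita equivalence, to its commutative part. As recorded in the sentence preceding this corollary, for $p\in X_{(i)}$ with $i\neq 0$ an analytic chart gives an equivalence between $\Coh(\sh{A})_p$ and the category of $\sh{J}_{(i)}$-modules supported at the origin; this equivalence comes from an isomorphism $\sh{A}|_U\cong\sh{J}_{(i)}|_U$ of sheaves of algebras, so it is the identity on underlying coherent sheaves and in particular preserves $\mathbb{C}$-dimensions and all $\Ext$-groups. So I would only need to prove the two assertions for $\sh{J}_{(i)}$, $i\in\{1,2,3\}$.

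Next I would write $\sh{J}_{(i)}=M_{5}(\mathbb{C})\tens{\mathbb{C}}R_{(i)}$ with $R_{(i)}$ the evident commutative ring, namely $R_{(1)}=\sh{O}_{\mathbb{C}^3}[v,w]/(v^5-y,w^5-z)$, $R_{(2)}=(\sh{O}_{\mathbb{C}^3}[w]/(w^5-z))^{\oplus 5}$, and $R_{(3)}=\sh{O}_{\mathbb{C}^3}^{\oplus 25}$. Eliminating the relations $y=v^5$ and $z=w^5$ identifies $\sh{O}_{\mathbb{C}^3}[v,w]/(v^5-y,w^5-z)\cong\mathbb{C}[x,v,w]$ and $\sh{O}_{\mathbb{C}^3}[w]/(w^5-z)\cong\mathbb{C}[x,y,w]$, so in all three cases $R_{(i)}\cong B^{\oplus 5^{i-1}}$ for a polynomial ring $B$ in three variables. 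The corresponding finite morphism $\Spec B\to\mathbb{C}^3$ (e.g.\ $(x,v,w)\mapsto(x,v^5,w^5)$ for $R_{(1)}$) has scheme-theoretic fibre over the origin supported at a single point, so each copy of $B$ contributes exactly one maximal ideal lying over the origin, and its localization is regular local of dimension $3$; hence $R_{(i)}$ has exactly $5^{i-1}$ maximal ideals $\mathfrak{m}_1,\ldots,\mathfrak{m}_{5^{i-1}}$ over the origin, one in each factor.

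Since $M_{5}(\mathbb{C})$ is a simple ring with unique simple module $\mathbb{C}^5$ and $R_{(i)}$ is commutative, the simple $\sh{J}_{(i)}$-modules are precisely the modules $\mathbb{C}^5\tens{\mathbb{C}}(R_{(i)}/\mathfrak{m})$ for $\mathfrak{m}$ maximal, and such a module is supported at the origin exactly when $\mathfrak{m}$ lies over the origin; this yields $5^{i-1}$ simple modules, each of $\mathbb{C}$-dimension $5$. For the Ext-quiver I would invoke that the Morita equivalence $M_{5}(\mathbb{C})\tens{\mathbb{C}}R_{(i)}\sim R_{(i)}$ carries these simples to the residue fields $R_{(i)}/\mathfrak{m}_j$ and preserves $\Ext$, so that $\Ext^1_{R_{(i)}}(R_{(i)}/\mathfrak{m}_j,R_{(i)}/\mathfrak{m}_l)$ vanishes for $j\neq l$ (the ideals sit in distinct factors) and is isomorphic to $(\mathfrak{m}_j/\mathfrak{m}_j^2)^{\vee}\cong\mathbb{C}^3$ for $j=l$ (as $(R_{(i)})_{\mathfrak{m}_j}$ is regular local of dimension $3$); transporting back through the equivalence of the first paragraph gives exactly $5^{i-1}$ vertices with three loops at each and no other arrows. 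I do not expect a serious obstacle: the result is a formal consequence of the local model together with elementary Morita theory, and the only point requiring genuine care is the bookkeeping in the second step — checking that the commutative part of each $\sh{J}_{(i)}$ is smooth of dimension $3$ and meets the origin in precisely $5^{i-1}$ points.
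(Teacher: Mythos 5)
Your proof is correct and follows what the paper intends: the corollary is stated without proof, but the remark immediately after it spells out exactly the Morita equivalence of $\sh{J}_{(i)}$ with $\mathbb{C}[x,y,z]^{\oplus 5^{i-1}}$ that you use, so your argument is the one the paper leaves implicit. Your bookkeeping (the re-indexing $R_{(i)}\cong B^{\oplus 5^{i-1}}$ with $B$ a polynomial ring in three variables, the identification of the maximal ideals over the origin, and the computation $\Ext^1_{B}(\mathbb{C},\mathbb{C})\cong(\mathfrak m/\mathfrak m^2)^{\vee}\cong\mathbb{C}^3$) is correct, and the transport of $\Ext$-groups across the algebra isomorphism $\sh{A}|_U\cong\sh{J}_{(i)}|_U$ is unproblematic since extensions of modules supported at a point are themselves supported at that point.
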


\begin{remark}
For $i\neq 0$, the sheaves $\sh{J}_{(i)}$ of algebras on $\mathbb{C}^3$ are given by the non-commutative $\mathbb{C}[x,y,z]$-algebras
\[
M_5(\mathbb{C}[x,y,z])^{\oplus 5^{i-1}},
\]
which are Morita equivalent to the commutative algebras $\mathbb{C}[x,y,z]^{\oplus 5^{i-1}}$. Moreover, these algebras are Jacobi algebras of a quiver with potential. Therefore, one may interpret Theorem~\ref{thm:local-model} as an explicit (analytic) local model of $(X,\sh{A})$ by quivers with potential.
\end{remark}

\section{Computation of Donaldson--Thomas invariants}

In this section, we will compute the generating function
\[
Z^{\sh{A}}(t)=\sum_{n=0}^{\infty}\chi_{\vir}\big(\Hilb^n(\sh{A})\big)\,t^n
\]
of degree zero DT invariants of the generic quantum Fermat quintic threefold.

\subsection{Analytic moduli spaces}

For technical reasons, we consider the category $\ConSch$ of \emph{analytic} schemes carrying an \emph{algebraic} constructible function. Objects in $\ConSch$ are pairs $(U,\nu)$ such that $U$ is an analytic open subset of an algebraic scheme $X$ and $\nu$ is a function $U\to\mathbb{Z}$ that extends to an algebraic constructible function $\overline{\nu}:X\to\mathbb{Z}$.  Morphisms from $(U_1,\nu_1)$ to $(U_2,\nu_2)$ are analytic morphisms $f:U_1\to U_2$ such that $\nu_2\circ f=\nu_1$. Also, the product exists in the category $\ConSch$ given by
\[
(U_1,\nu_1)\times (U_2,\nu_2)=(U_1\times U_2, \nu_1\times\nu_2),
\]
where $(\nu_1\times\nu_2)(x_1,x_2)=\nu_1(x_1)\nu_2(x_2)$.

It is clear that if $(U_1,\nu_1)$ and $(U_2,\nu_2)$ are isomorphic in $\ConSch$, then $\chi(U_1,\nu_1)=\chi(U_2,\nu_2)$.

\begin{lemma}
Let $X$ and $Y$ be schemes. If $X$ and $Y$ are analytic local isomorphic, that is, there exist analytic open covers $\{U_{\alpha}\}_{\alpha}$ of $X$ and $\{V_{\alpha}\}_{\alpha}$ of $Y$ such that for each $\alpha$, there is an analytic isomorphism $f_{\alpha}:U_{\alpha}\to V_{\alpha}$. Then
\[
\chi_{\vir}(X)=\chi_{\vir}(Y).
\]
\end{lemma}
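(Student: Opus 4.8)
The plan is to reduce the claim to the statement that the Behrend function is a local analytic invariant, so that its pullback along each local isomorphism $f_\alpha$ agrees, and then to argue that the weighted Euler characteristic is additive over a suitably chosen partition subordinate to the two analytic covers. Concretely, the key input is that $\nu_X$ is intrinsic: for an analytic open $U \subseteq X$ one has $\nu_X|_U = \nu_U$ (the Behrend function of $U$ as an analytic space, or equivalently computed from any algebraic scheme having $U$ as an analytic open), and an analytic isomorphism $f_\alpha : U_\alpha \to V_\alpha$ satisfies $f_\alpha^*(\nu_Y|_{V_\alpha}) = \nu_X|_{U_\alpha}$. This is where the machinery of the category $\ConSch$ introduced just above is used: each pair $(U_\alpha, \nu_X|_{U_\alpha})$ is an object of $\ConSch$, the $f_\alpha$ are isomorphisms in $\ConSch$ once we know they respect Behrend functions, and isomorphic objects have equal weighted Euler characteristic.

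First I would recall (citing \cite{Beh09}) that the Behrend function is local in the analytic topology and compatible with analytic isomorphisms; this gives $f_\alpha^*(\nu_Y|_{V_\alpha}) = \nu_X|_{U_\alpha}$ for every $\alpha$. Next, since $X$ is of finite type it is quasi-compact, so finitely many of the $U_\alpha$ already cover $X$; passing to that finite subcover (and the corresponding $V_\alpha$) we may assume the index set is finite. Then I would refine $\{U_\alpha\}$ to a finite partition of $X$ into locally closed pieces $X = \coprod_k Z_k$ with each $Z_k$ contained in some $U_{\alpha(k)}$ — this is a standard constructible-topology argument, taking $Z_k$ to be the difference of intersections of the $U_\alpha$. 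The images $W_k := f_{\alpha(k)}(Z_k) \subseteq V_{\alpha(k)}$ are locally closed in $Y$ and, because on any common analytic overlap the transition maps $f_\beta \circ f_\alpha^{-1}$ are analytic isomorphisms compatible with the (intrinsic) Behrend functions, one checks that the $W_k$ form a partition of $Y$ independent of the choices. Finally, using additivity of the weighted Euler characteristic over locally closed partitions,
\[
\chi_{\vir}(X) = \sum_k \chi\big(Z_k, \nu_X|_{Z_k}\big) = \sum_k \chi\big(W_k, \nu_Y|_{W_k}\big) = \chi_{\vir}(Y),
\]
where the middle equality holds term by term because $f_{\alpha(k)} : Z_k \to W_k$ is an isomorphism of analytic schemes intertwining $\nu_X|_{Z_k}$ and $\nu_Y|_{W_k}$, and the topological Euler characteristic with respect to a constructible weight is an analytic (indeed motivic) invariant.

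I expect the main obstacle to be the verification that one can genuinely partition both $X$ and $Y$ compatibly: the covers are indexed by the same set, but a priori there is no global map $X \to Y$, so one must check that refining on the $X$ side and transporting via the $f_\alpha$ produces a well-defined partition of $Y$ rather than an ill-defined collection depending on overlap choices. This is handled by the cocycle-type observation that on $U_\alpha \cap U_\beta$ the maps $f_\alpha$ and $f_\beta$ need not agree, but the resulting images of a fixed locally closed $Z \subseteq U_\alpha \cap U_\beta$ have the same Behrend-weighted Euler characteristic since $f_\beta \circ f_\alpha^{-1}$ is a Behrend-preserving analytic isomorphism; so although the $W_k$ themselves may depend on choices, the sum $\sum_k \chi(W_k, \nu_Y|_{W_k})$ does not, and it reassembles to $\chi_{\vir}(Y)$ by additivity applied to any one honest partition of $Y$ refining $\{V_\alpha\}$. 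A secondary, more routine point is simply citing the right form of the statements that $\nu$ is analytic-local and that $\chi(-,\text{constructible})$ is additive and invariant under analytic isomorphism.
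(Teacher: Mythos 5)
Your high-level approach --- reduce to the analytic-locality of the Behrend function, refine the covers to matching locally closed pieces, and add --- is essentially what the paper does: it observes that each $f_\alpha$ gives an isomorphism $(U_\alpha,\nu_X|_{U_\alpha}) \cong (V_\alpha,\nu_Y|_{V_\alpha})$ in $\ConSch$ and then invokes the computability of Euler characteristics from an open cover.

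However, the ``reassembly'' step is a genuine gap, and you have not closed it. After partitioning $X = \coprod_k Z_k$ with $Z_k \subset U_{\alpha(k)}$ and setting $W_k := f_{\alpha(k)}(Z_k)$, there is no reason for the $W_k$ to cover $Y$ or to be pairwise disjoint: the hypothesis imposes no compatibility between $f_\alpha$ and $f_\beta$ on overlaps, and your ``cocycle-type observation'' concerns only Behrend-compatibility of transitions, not whether the transported pieces tile $Y$. In fact, as literally stated the lemma is false: take $X$ a disjoint pair of points, $Y$ a single point, $U_1,U_2$ the two singletons of $X$, and $V_1=V_2=Y$. The hypotheses are satisfied (the transitions are vacuous since $U_1\cap U_2=\varnothing$), yet $\chi_{\vir}(X)=2\neq 1=\chi_{\vir}(Y)$. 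Here $\sum_k\chi(W_k,\nu_Y|_{W_k})=2$; this sum is indeed independent of choices, but that is automatic --- it always equals $\chi_{\vir}(X)$ by your term-by-term equality --- and says nothing about $Y$. So the sentence ``it reassembles to $\chi_{\vir}(Y)$ by additivity'' is precisely where the argument breaks down.

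To be fair, the paper's one-line proof glosses over the same point; the statement needs an additional compatibility hypothesis (that the $f_\alpha$ agree on overlaps, or more simply that $X$ and $Y$ have isomorphic underlying analytic spaces). In the paper's applications the local isomorphisms are restrictions of a single coherent identification, so the intended statement holds. Once that hypothesis is added the proof is immediate: a global analytic isomorphism identifies the underlying topological spaces and, by analytic-locality of $\nu$, the Behrend functions, so the weighted Euler characteristics coincide without any need to refine to a partition.
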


\begin{proof}
Since the Behrend function depends only on the analytic topology of a scheme, the analytic isomorphism $f_{\alpha}$ induces an isomorphism
\[
f_{\alpha}:(U_{\alpha},\nu_{U_{\alpha}})\to (V_{\alpha},\nu_{V_{\alpha}})
\]
in $\ConSch$. Then the result follows from the fact that Euler characteristics can be computed from an open cover. 
\end{proof}

%\begin{definition}
%We say a morphism $f:(U_1,\nu_1)\to (U_2,\nu_2)$ in $\ConSch$ is algebraic if there exist analytic embeddings $U_i\hookrightarrow X_i$ to schemes such that $f$ extends to a morphism $\overline{f}:X_1\to X_2$ of schemes. In addition, an algebraic morphism $f:(U_1,\nu_1)\to(U_2,\nu_2)$ is said to be an (open) immersion if there is an extended morphism $\overline{f}:X_1\to X_2$ which is an (open) immersion.
%\end{definition}

\begin{definition}
Let $f:X\to Y$ be a morphism of schemes, $F$ a scheme with constructible functions $\mu:X\to\mathbb{Z}$, $\nu:F\to\mathbb{Z}$. We say
\[
f:(X,\mu)\to Y
\]
is an \emph{analytic local} fibration with fibre $(F,\nu)$ if there is an analytic open cover $\{U_{\alpha}\}_{\alpha}$ of $Y$ such that for each $\alpha$, there is an isomorphism
\[
\big(f^{-1}(U_{\alpha}),\mu\big)\cong (U_{\alpha},1)\times(F,\nu)
\]
in $\ConSch$. Note that $f:(X,\mu)\to (Y,1)$ is generally not a morphism in $\ConSch$.
\end{definition}

\begin{lemma}
If $f:(X,\mu)\to Y$ is an analytic local fibration with fibre $(F,\nu)$, then
\[
\chi(X,\mu)=\chi(Y)\cdot\chi(F,\nu).
\]
\end{lemma}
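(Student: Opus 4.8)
The plan is to reduce the statement to a purely combinatorial fact about Euler characteristics by using the analytic local fibration structure together with the additivity and multiplicativity of the weighted Euler characteristic. First I would take the given analytic open cover $\{U_\alpha\}_\alpha$ of $Y$ over which the isomorphisms $(f^{-1}(U_\alpha),\mu)\cong(U_\alpha,1)\times(F,\nu)$ in $\ConSch$ hold. Refining this cover, I may assume it is a \emph{locally finite} cover and, by passing to a constructible stratification of $Y$ subordinate to it, obtain a finite decomposition $Y=\coprod_\beta Y_\beta$ into locally closed (algebraic) subschemes such that each $Y_\beta$ is contained in some $U_{\alpha(\beta)}$. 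This is possible because $Y$ is of finite type, so a suitable refinement to a constructible stratification exists; the key point is that $\chi$ and $\chi_{\vir}$ are additive over such stratifications.

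Next I would analyze the restriction of $f$ and $\mu$ over each stratum $Y_\beta$. Since $Y_\beta\subset U_{\alpha(\beta)}$ and the isomorphism in $\ConSch$ restricts to $(f^{-1}(Y_\beta),\mu|)\cong(Y_\beta,1)\times(F,\nu)$, and since the weighted Euler characteristic of a product in $\ConSch$ factors as $\chi\big((U_1,\nu_1)\times(U_2,\nu_2)\big)=\chi(U_1,\nu_1)\cdot\chi(U_2,\nu_2)$ (which follows from the multiplicativity of topological Euler characteristic for constructible functions on a product, exactly as recorded just before the statement for isomorphisms in $\ConSch$), I get
\[
\chi\big(f^{-1}(Y_\beta),\mu\big)=\chi(Y_\beta)\cdot\chi(F,\nu).
\]
Here I use that $(Y_\beta,1)$ has weighted Euler characteristic $\chi(Y_\beta)$, the ordinary topological Euler characteristic.

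Then I would sum over $\beta$: additivity of the weighted Euler characteristic over the stratification $X=\coprod_\beta f^{-1}(Y_\beta)$, which is the preimage stratification, gives
\[
\chi(X,\mu)=\sum_\beta\chi\big(f^{-1}(Y_\beta),\mu\big)=\sum_\beta\chi(Y_\beta)\cdot\chi(F,\nu)=\Big(\sum_\beta\chi(Y_\beta)\Big)\cdot\chi(F,\nu)=\chi(Y)\cdot\chi(F,\nu),
\]
using once more that $\chi(Y)=\sum_\beta\chi(Y_\beta)$. This completes the argument.

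The main obstacle I expect is the bookkeeping required to pass from the given analytic open cover to a \emph{finite} constructible stratification of $Y$ on which everything trivializes simultaneously, and to check that the $\ConSch$-isomorphisms are compatible with this refinement so that the multiplicativity of $\chi$ on products applies stratum by stratum. One has to be slightly careful that $\mu$ restricted to $f^{-1}(Y_\beta)$ is still the constructible function coming from the product structure (it is, since the $\ConSch$-isomorphism is an isomorphism of pairs, so it transports $\mu$ to $1\boxtimes\nu$), and that the topological Euler characteristic of an analytic open subset agrees with that of the locally closed algebraic stratum it contains — which is fine because we stratify $Y$ itself into algebraic pieces each landing inside a single chart. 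Everything else is a routine application of the additivity and multiplicativity properties of $\chi(-,-)$ already used in the preceding lemma.
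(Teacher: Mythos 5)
Your proof has a genuine gap in the refinement step. You claim that one can refine the analytic open cover $\{U_\alpha\}$ of $Y$ to a finite \emph{algebraic} (constructible) stratification $Y=\coprod_\beta Y_\beta$ with each stratum $Y_\beta$ contained in a single chart $U_{\alpha(\beta)}$. This is false in general: in an irreducible variety, a locally closed algebraic subset either contains a dense open subset of $Y$ (and hence cannot fit inside a proper analytic open subset like a small ball) or is contained in a proper closed subvariety. For a concrete obstruction, take $Y=\mathbb{P}^1$ covered by two analytic discs; the only constructible subsets are finite or cofinite, and a cofinite set is dense, so it cannot be subordinate to a cover by discs. Analytic open sets are simply not visible to the Zariski/constructible topology, so the desired subordinate algebraic stratification need not exist.

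The paper avoids this issue by slicing in the other direction. For each value $c\in\mathbb{Z}$, the level set $\mu^{-1}(c)$ \emph{is} algebraic (because $\mu$ extends to an algebraic constructible function on an ambient scheme), and the restriction $f\colon\mu^{-1}(c)\to Y$ is an unweighted analytic-local fibration with fibre $\nu^{-1}(c)$, again an algebraic set. One then invokes the classical fact that the topological Euler characteristic is multiplicative for analytic-locally trivial fibrations of finite-type spaces, giving $\chi(\mu^{-1}(c))=\chi(Y)\,\chi(\nu^{-1}(c))$, and sums over $c$ weighted by $c$ to recover $\chi(X,\mu)=\chi(Y)\,\chi(F,\nu)$. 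The multiplicativity of $\chi$ for such fibrations is a topological statement proved, e.g., via compactly supported cohomology or by using a cover and a Mayer--Vietoris argument; it does \emph{not} go through a subordinate algebraic stratification. If you want to keep the flavor of your argument, you would need to replace ``constructible algebraic stratification'' by a stratification in the analytic or subanalytic category over which the (compactly supported) Euler characteristic is still additive, and be careful that the weighted quantities you compute on analytic pieces agree with the algebraically defined $\chi(X,\mu)$. As written, the leap to an algebraic stratification subordinate to an analytic cover does not hold up.
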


\begin{proof}
It is easy to see that for any $c\in\mathbb{Z}$, $f:\mu^{-1}(c)\to Y$ is an analytic-local fibration with fibre $\nu^{-1}(c)$, thus
\[
\chi(\mu^{-1}(c))=\chi(Y)\cdot\chi(\nu^{-1}(c)).
\]
\end{proof}
 
Since our local models of $(X,\sh{A})$ are given in analytic topology, it is not obvious that it gives an analytic isomorphism between algebraic moduli spaces.

For the rest of the section, let $X$ be a quasi-projective smooth variety and $\sh{A}$ a locally free sheaf of non-commutative algebras on $X$.

\begin{definition}
The Hilbert--Chow map is the composition
\[
\Hilb^n(\sh{A})\to \msp{M}^n(\sh{A}) \to \msp{M}^n(X)\cong\Sym^n(X),
\]
where the middle morphism sends a finite-dimension $\sh{A}$-module to its underlying coherent sheaves on $X$, which has zero-dimensional support and is of length $n$.
\end{definition}

For any analytic or algebraic subset $S\subset X$, we define the fiber product
\[
\xymatrix{
\Hilb^n(\sh{A})_{S}\ar@{^{(}->}[r]\ar[d]\ar@{}[dr]|{\Box} & \Hilb^n(\sh{A})\ar[d] \\
\Sym^n(S)\ar@{^{(}->}[r] & \Sym^n(X)
}
\]
in the appropriate category. In other words, $\Hilb^n(X,\sh{A})_S$ parameterizes $\sh{A}$-module quotients supported in $S$. Clearly if $S$ is analytic open in $X$, then $\Hilb^n(\sh{A})$ is also analytic open in $\Hilb^n(\sh{A})$. Also if $S$ is a locally closed subscheme of $X$, then $\Hilb^n(\sh{A})$ is also a locally closed subscheme in $\Hilb^n(\sh{A})$

We first prove the equivalence between algebraic and analytic finite-dimensional $\sh{A}$-modules.

\begin{lemma}\label{lem:GAGA}
The analytification defines an equivalence
\[
\Coh(\sh{A})_{\fd}\cong\Coh(\sh{A}^{\mathrm{an}})_{\fd}
\]
of categories, where $\Coh(\sh{A}^{\mathrm{an}})_{\fd}$ is the category of analytic coherent sheaves on $X$ with an $\sh{A}^{\mathrm{an}}$-action. 
\end{lemma}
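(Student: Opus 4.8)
The plan is to reduce this to classical GAGA for coherent sheaves, using the fact that a finite-dimensional $\sh{A}$-module is in particular a coherent $\sh{O}_X$-module with zero-dimensional (hence proper) support. First I would recall the structure: $\sh{A}$ is a locally free sheaf of $\sh{O}_X$-algebras, so an object of $\Coh(\sh{A})_{\fd}$ is a coherent $\sh{O}_X$-module $\sh{F}$ with finite support together with an $\sh{O}_X$-algebra homomorphism $\sh{A}\to\SHom_{\sh{O}_X}(\sh{F},\sh{F})$; equivalently, since the $\sh{A}$-action factors through the (associative) multiplication, it is the data of $\sh{O}_X$-linear maps realizing the algebra relations. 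The analytification functor $\sh{F}\mapsto\sh{F}^{\mathrm{an}}$ on coherent $\sh{O}_X$-modules is exact and $\sh{O}_X^{\mathrm{an}}$-linear, and $\SHom_{\sh{O}_X}(\sh{F},\sh{G})^{\mathrm{an}}\cong\SHom_{\sh{O}_X^{\mathrm{an}}}(\sh{F}^{\mathrm{an}},\sh{G}^{\mathrm{an}})$ for $\sh{F},\sh{G}$ coherent, so an $\sh{A}$-action on $\sh{F}$ analytifies to an $\sh{A}^{\mathrm{an}}$-action on $\sh{F}^{\mathrm{an}}$, and morphisms of $\sh{A}$-modules go to morphisms of $\sh{A}^{\mathrm{an}}$-modules. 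This gives the functor; it is automatically faithful.

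Next I would prove fully faithfulness. For $\sh{F},\sh{G}\in\Coh(\sh{A})_{\fd}$, an $\sh{A}^{\mathrm{an}}$-morphism $\sh{F}^{\mathrm{an}}\to\sh{G}^{\mathrm{an}}$ is in particular an $\sh{O}_X^{\mathrm{an}}$-morphism, and by GAGA (the support being proper — indeed finite — this is the essentially affine/finite-support case, which is elementary and does not need the full projective GAGA machinery) it comes from a unique $\sh{O}_X$-morphism $\sh{F}\to\sh{G}$; that this underlying algebraic map is $\sh{A}$-linear follows because $\sh{A}$-linearity is the vanishing of a certain $\sh{O}_X$-morphism $\sh{A}\otimes\sh{F}\to\sh{G}$ whose analytification vanishes, and analytification is faithful on coherent sheaf morphisms. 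For essential surjectivity, given $\sh{G}\in\Coh(\sh{A}^{\mathrm{an}})_{\fd}$, its underlying analytic coherent sheaf has finite support, so by GAGA it is $\sh{F}^{\mathrm{an}}$ for a unique coherent algebraic $\sh{F}$; the $\sh{A}^{\mathrm{an}}$-action is an element of $\Hom_{\sh{O}_X^{\mathrm{an}}}(\sh{A}^{\mathrm{an}}\otimes\sh{F}^{\mathrm{an}},\sh{F}^{\mathrm{an}})\cong\Hom_{\sh{O}_X}(\sh{A}\otimes\sh{F},\sh{F})$ (using that $\sh{A}$ is locally free, hence $\sh{A}\otimes\sh{F}$ is coherent and $(\sh{A}\otimes\sh{F})^{\mathrm{an}}\cong\sh{A}^{\mathrm{an}}\otimes\sh{F}^{\mathrm{an}}$), so it descends to an algebraic $\sh{O}_X$-linear map $\sh{A}\otimes\sh{F}\to\sh{F}$; associativity and unitality are again equalities of morphisms of coherent sheaves, so they descend by faithfulness of analytification. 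Hence $\sh{F}$ carries an $\sh{A}$-module structure analytifying to the given one.

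The one genuine point to be careful about — and the step I expect to be the main obstacle to state cleanly rather than technically hard — is that $X$ here is only quasi-projective, not projective, so the usual GAGA statement does not apply verbatim. The fix is that everything in sight has zero-dimensional (finite) support: a coherent sheaf with finite support on any finite-type $\mathbb{C}$-scheme, algebraic or analytic, is determined by its (finite, hence artinian) stalks, and algebraic and analytic completions at a closed point agree, so $\Coh_{\fd}(\sh{O}_X)\cong\Coh_{\fd}(\sh{O}_X^{\mathrm{an}})$ is elementary; alternatively one embeds a neighborhood of the finite support into a projective variety and cites GAGA there. I would spell this out as a short preliminary observation (``coherent sheaves with finite support analytify to an equivalence, compatibly with $\Hom$ and $\otimes$''), and then the $\sh{A}$-equivariant statement follows formally as above. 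No deep input beyond finite-support GAGA and the exactness/compatibility of analytification with $\SHom$ and $\otimes$ on coherent sheaves is needed.
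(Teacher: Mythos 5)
Your proof is correct and follows essentially the same route as the paper: first reduce to the classical equivalence $\Coh(\sh{O}_X)_{\fd}\cong\Coh(\sh{O}_X^{\mathrm{an}})_{\fd}$ for finitely supported coherent sheaves (handling quasi-projectivity by compactification or the artinian-stalks observation), then transfer the $\sh{A}$-module structure by noting it is encoded in morphisms of finitely supported coherent sheaves, to which the same GAGA comparison of $\Hom$-spaces applies. The paper phrases the module structure as an $R$-algebra homomorphism $A\to\Hom_{\sh{O}_X^{\mathrm{an}}}(\sh{F}^{\mathrm{an}},\sh{F}^{\mathrm{an}})$ after reducing to the affine case, whereas you phrase it as a map $\sh{A}\otimes\sh{F}\to\sh{F}$ and verify fully faithfulness and essential surjectivity separately, but these are the same idea in different bookkeeping.
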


\begin{proof}
As mentioned before, this statement is Zariski local on $X$. We may assume $X=\Spec(R)$ is affine, and then $\sh{A}$ is a non-commutative $R$-algebra.

First, we may choose a compactification $X\subset\overline{X}$ so we can apply GAGA theorem \cite{GAGA}. Since analytification preserves supports of coherent sheaves, it induces an equivalence between $\Coh(\sh{O}_X)_{\fd}$ and $\Coh(\sh{O}_X^{\mathrm{an}})_{\fd}$.

While $\sh{O}_X^{\mathrm{an}}$ and $\sh{A}^{\mathrm{an}}$ are not in the category $\Coh(\sh{O}_X^{\mathrm{an}})_{\fd}$, for any $\sh{F}^{\mathrm{an}}$ in $\Coh(\sh{O}_X^{\mathrm{an}})_{\fd}$, the ring $\Hom_{\sh{O}_X^{\mathrm{an}}}(\sh{F}^{\mathrm{an}},\sh{F}^{\mathrm{an}})$ is naturally an $R$-algebra. By definition, an $\sh{A}^{\mathrm{an}}$-module is given by an analytic coherent sheaf $\sh{F}^{\mathrm{an}}$ with a morphism
\[
A\to \Hom_{\sh{O}_X^{\mathrm{an}}}(\sh{F}^{\mathrm{an}},\sh{F}^{\mathrm{an}})
\]
of $R$-algebras, which must be algebraic. Therefore the analytification
\[
\Coh(\sh{A})_{\fd}\cong\Coh(\sh{A}^{\mathrm{an}})_{\fd}
\]
is an equivalence of categories.
\end{proof}

\begin{proposition}
Let $U$ be an analytic open subset of $X$. Then analytification of $\Hilb^n(\sh{A})_U$ is the analytic moduli space parameterizing analytic cyclic $\sh{A}^{\mathrm{an}}|_{U}$-modules.
\end{proposition}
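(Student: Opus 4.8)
The plan is to combine Lemma~\ref{lem:GAGA} with the explicit construction of the Hilbert scheme as a quotient and to argue that analytification commutes with both the cyclicity condition and the relevant geometric invariant theory. First I would recall that $\Hilb^n(\sh{A})$ is constructed as a GIT quotient (or an open subscheme of a Quot scheme) parameterizing pairs $(\sh{F},s)$ where $\sh{F}$ is a length-$n$ coherent $\sh{A}$-module and $s$ is a generator, i.e. a surjection $\sh{A}\twoheadrightarrow\sh{F}$; over any base, a $T$-point of $\Hilb^n(\sh{A})$ is a $T$-flat family of such cyclic modules. Since everything is finite-dimensional over the support, restricting attention to modules supported set-theoretically in $U$ defines the open (respectively locally closed, as appropriate) subscheme $\Hilb^n(\sh{A})_U$, and forming its analytification is functorial.

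The key step is then to produce a natural transformation between the functor represented by $\big(\Hilb^n(\sh{A})_U\big)^{\mathrm{an}}$ and the moduli functor of analytic cyclic $\sh{A}^{\mathrm{an}}|_U$-modules, and to check it is an isomorphism of functors. In one direction, a family of algebraic cyclic $\sh{A}$-modules supported in $U$ analytifies to a family of analytic cyclic $\sh{A}^{\mathrm{an}}|_U$-modules: this is immediate once one knows analytification is exact and preserves surjections and flatness. For the converse, given an analytic family of cyclic $\sh{A}^{\mathrm{an}}|_U$-modules over an analytic base $S$, one needs to produce a corresponding algebraic/analytic point; the essential input is Lemma~\ref{lem:GAGA}, which says that a single analytic finite-dimensional $\sh{A}^{\mathrm{an}}$-module is automatically algebraic (the structure morphism $A\to\End_{\sh{O}_X^{\mathrm{an}}}(\sh{F}^{\mathrm{an}})$ lands in the algebraic endomorphism ring). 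Applied fibrewise, together with the fact that the underlying analytic coherent sheaf of a flat family over $S$ with zero-dimensional length-$n$ fibres is locally free of rank $n$, this identifies analytic families with analytic maps $S\to\Hilb^n(\sh{A})_U$.

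I would organize the argument as follows: (1) recall the moduli-theoretic description of $\Hilb^n(\sh{A})_U$ and observe that, after pulling back along $\Sym^n(U)\hookrightarrow\Sym^n(X)$, it is cut out inside an open subset of a Quot-type scheme by the cyclicity (generation) condition, which is an open condition; (2) note that analytification of schemes is compatible with forming fibre products, open immersions, and GIT/Quot constructions, so $\big(\Hilb^n(\sh{A})_U\big)^{\mathrm{an}}$ represents the analytic version of the same moduli functor restricted to $\sh{A}^{\mathrm{an}}|_U$; (3) invoke Lemma~\ref{lem:GAGA} to see that the analytic moduli functor ``of analytic cyclic $\sh{A}^{\mathrm{an}}|_U$-modules'' coincides with this, since every analytic finite-dimensional $\sh{A}^{\mathrm{an}}$-module structure is automatically the analytification of an algebraic one, compatibly in families. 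I expect the main obstacle to be the bookkeeping in step (2): verifying in families (not just on points) that analytification carries the representing object of the algebraic cyclic-module functor to the representing object of the analytic one, which requires knowing that analytification commutes with the relevant Quot/GIT quotient and that the stability/cyclicity locus is preserved. This is essentially a relative GAGA statement combined with the openness of the cyclic locus, and while standard, it is the step that needs care rather than the fibrewise identification, which Lemma~\ref{lem:GAGA} already supplies.
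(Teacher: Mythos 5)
Your proposal correctly identifies the key input (Lemma~\ref{lem:GAGA}) and the overall shape of the argument, but it leaves the decisive step unproved. You write that the main obstacle is ``verifying in families\ldots that analytification carries the representing object of the algebraic cyclic-module functor to the representing object of the analytic one,'' and that this is ``essentially a relative GAGA statement.'' That is precisely the difficulty, and you do not close it. The issue is real: $X$ here is only quasi-projective, and $U$ is merely an analytic open subset, so there is no straightforward assertion that analytification commutes with the Quot/GIT construction in this setting. GAGA needs properness somewhere, and your plan never says where.

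The paper's proof takes a different and cleaner route that bypasses relative GAGA entirely. It first \emph{constructs} the analytic moduli space $\mathcal{M}$ directly, as the closed subspace of an analytic Quot space of $\sh{A}^{\mathrm{an}}$ cut out by $\sh{A}^{\mathrm{an}}$-invariance of the kernel. Then it produces the canonical comparison map $\Hilb^n(\sh{A})_U \to \mathcal{M}$ (algebraic families are analytic) and proves it is an analytic isomorphism by showing two things separately: it is \emph{bijective on points} (immediate from Lemma~\ref{lem:GAGA}), and it is \emph{\'etale} (by applying the same lemma to families over Artinian, hence proper, bases, giving an equivalence of infinitesimal deformation theories). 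Bijective plus \'etale gives the isomorphism. The reason this works where your sketch stalls is that one only ever needs GAGA over proper bases --- single points and fat points --- and never over the non-proper $U$. If you want to repair your approach, replace your step (2) by this ``construct $\mathcal{M}$, compare, check bijective and \'etale'' argument; otherwise the proof as you have outlined it has a genuine gap at exactly the point you flagged.
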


\begin{proof}
We first note that such analytic moduli space exists. Since $\sh{A}^{\mathrm{an}}$ is locally free, there is a Quot space  $\mathcal{Q}$ parameterizing quotients of $\sh{A}^{\mathrm{an}}$, and we take $\mathcal{M}$ to the closed subspace of $\mathcal{Q}$ consisting of points $[\sh{A}^{\mathrm{an}}\to\sh{F}^{\mathrm{an}}]$ such that the kernel is $\sh{A}^{\mathrm{an}}$-invariant.

Since any family of algebraic $\sh{A}$-modules is analytic, there is a canonical morphism
\begin{equation}\label{eq:temp3}
    \Hilb^n(\sh{A})_U\to\mathcal{M},
\end{equation}
which is bijective from the previous lemma. Note that the previous lemma also works for any family $\sh{A}$-modules over a proper scheme (where GAGA applies). It gives an equivalence between infinitesimal deformations of algebraic and analytic $\sh{A}$-modules. In other words, \eqref{eq:temp3} is \'{e}tale and hence is an analytic isomorphism.
\end{proof}

\begin{corollary}\label{cor:GAGA}
Let $\sh{A}_1$ and $\sh{A}_2$ be two coherent sheaves of non-commutative algebras on $X$. Suppose there is an analytic open subset $U\subset X$ such that $\sh{A}_1|_{U}\cong\sh{A}_2|_{U}$. Then there is an analytic isomorphism
\[
\Hilb^n(\sh{A}_1)_{U}\cong\Hilb^n(\sh{A}_2)_{U}.
\]
\end{corollary}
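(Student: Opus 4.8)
The statement is an immediate consequence of the previous proposition together with Lemma~\ref{lem:GAGA}, applied to the two algebras $\sh{A}_1$ and $\sh{A}_2$. The plan is to exploit the fact that the analytification of $\Hilb^n(\sh{A}_i)_U$ has been identified with the analytic moduli space of cyclic analytic $\sh{A}_i^{\mathrm{an}}|_U$-modules, which is a purely analytic object; since an isomorphism $\sh{A}_1|_U\cong\sh{A}_2|_U$ of sheaves of non-commutative algebras induces an equivalence of the categories of cyclic modules (and, more importantly, of their families), it induces an isomorphism of the representing analytic moduli spaces.

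First I would fix an isomorphism $\varphi:\sh{A}_1|_U\xrightarrow{\sim}\sh{A}_2|_U$ of sheaves of non-commutative algebras on $U$. Restriction of scalars along $\varphi$ carries any cyclic $\sh{A}_2^{\mathrm{an}}|_U$-module to a cyclic $\sh{A}_1^{\mathrm{an}}|_U$-module, functorially and compatibly with base change, so it induces an isomorphism between the two Quot-type analytic moduli spaces $\mathcal{M}_1$ and $\mathcal{M}_2$ constructed in the proof of the previous proposition (the pointed generator is preserved, and the condition that the kernel be submodule-closed is intertwined by $\varphi$). The inverse is induced by $\varphi^{-1}$. Then I would invoke the previous proposition twice to get analytic isomorphisms $\Hilb^n(\sh{A}_1)_U^{\mathrm{an}}\cong\mathcal{M}_1\cong\mathcal{M}_2\cong\Hilb^n(\sh{A}_2)_U^{\mathrm{an}}$, and conclude that $\Hilb^n(\sh{A}_1)_U$ and $\Hilb^n(\sh{A}_2)_U$ are analytically isomorphic.

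There is essentially no obstacle here: all the real work was done in Lemma~\ref{lem:GAGA} and the proposition preceding this corollary, which together establish that the algebraic Hilbert scheme restricted to $U$ is \'etale-equivalent, hence analytically isomorphic, to an intrinsically analytic moduli space depending only on $\sh{A}^{\mathrm{an}}|_U$. The only point requiring a line of care is that $\varphi$ acts on \emph{families} of cyclic modules and not merely on objects, so that it genuinely induces a morphism of moduli spaces rather than just a bijection on points; but this is automatic since restriction of scalars along a morphism of sheaves of algebras is functorial in the base, and one can simply carry it out on the universal family over $\mathcal{M}_2$ to produce the classifying map to $\mathcal{M}_1$.
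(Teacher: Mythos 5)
Your proof is correct and matches the intended argument: the paper states the corollary without proof precisely because, once the preceding proposition identifies $\Hilb^n(\sh{A}_i)_U^{\mathrm{an}}$ with a purely analytic moduli space $\mathcal{M}_i$ depending only on $\sh{A}_i^{\mathrm{an}}|_U$, the conclusion is immediate from transporting cyclic quotients along a fixed isomorphism $\varphi:\sh{A}_1|_U\to\sh{A}_2|_U$. Your remark that $\varphi$ must be applied at the level of families (so that it produces a morphism of analytic spaces, not just a bijection on closed points) is exactly the one point worth flagging, and you handle it correctly via the universal family.
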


\subsection{Stratifying Hilbert schemes of points}

For any locally closed subset $Z\subset X$, we have $\Hilb^n(\sh{A})_Z$, the locally closed subscheme of $\Hilb^n(\sh{A})$ parameterizing $\sh{A}$-module quotients supported in $Z$ with the induced Hilbert--Chow map $\Hilb^n(\sh{A})_Z\to\Sym^n(Z)$. Let
\[
Z^{\sh{A}}_Z(t)=\sum_{n=0}^{\infty}\,\chi_{\vir}\big(\Hilb^n(\sh{A}),\Hilb^n(\sh{A})_Z\big)\,t^n
\]
be the generating function.

\begin{proposition}\label{prop:hilb_strata_sup}
Let $X=X_1\coprod X_2\coprod\ldots\coprod X_r$ be a stratification of $X$. Then
\[
Z^{\sh{A}}(t)=\prod_{i=1}^r Z^{\sh{A}}_{X_{i}}(t).
\]
\end{proposition}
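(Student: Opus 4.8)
The plan is to decompose each Hilbert scheme $\Hilb^n(\sh{A})$ according to how the length-$n$ support of a quotient distributes among the strata $X_1,\ldots,X_r$, and then recognize the resulting pieces as products. Fix $n$ and a tuple $\bm{n}=(n_1,\ldots,n_r)$ of non-negative integers with $n_1+\cdots+n_r=n$. Using the Hilbert--Chow map $\Hilb^n(\sh{A})\to\Sym^n(X)$ and the stratification $\Sym^n(X)=\coprod_{|\bm{n}|=n}\prod_i \Sym^{n_i}(X_i^\circ)$ (where $X_i^\circ$ denotes the open stratum, so this is a stratification into locally closed pieces), pull back to obtain a locally closed decomposition
\[
\Hilb^n(\sh{A})=\coprod_{|\bm{n}|=n}\Hilb^{\bm{n}}(\sh{A}),
\]
where $\Hilb^{\bm{n}}(\sh{A})$ parameterizes quotients whose support meets $X_i$ in length exactly $n_i$ for each $i$. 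Since the Behrend function and hence $\chi_{\vir}$ are additive over locally closed decompositions, it suffices to identify each stratum.

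The key step is the identification $\Hilb^{\bm{n}}(\sh{A})\cong\prod_{i=1}^r \Hilb^{n_i}(\sh{A})_{X_i}$, compatibly with Behrend functions. The underlying set-level bijection comes from Lemma~3.1 (already stated in the excerpt): a finite-dimensional $\sh{A}$-module decomposes canonically as a direct sum $\sh{F}=\bigoplus_x \sh{F}_x$ over its support, so a module supported with length $n_i$ in $X_i$ splits uniquely as $\bigoplus_i \sh{F}^{(i)}$ with $\sh{F}^{(i)}$ supported in $X_i$ of length $n_i$; conversely such a collection glues to a module with the prescribed support profile (the supports being disjoint, there are no compatibility conditions and no cross-terms in $\Ext$ or in the module structure). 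A cyclic generator for $\sh{F}$ corresponds to a tuple of cyclic generators for the $\sh{F}^{(i)}$, because the idempotents projecting onto the $\sh{F}_x$ are elements of $\End_{\sh{A}}(\sh{F})$ and thus the decomposition is respected by the $\sh{A}$-action. This upgrades to an isomorphism of schemes by a standard deformation-theoretic argument: the disjointness of supports makes the obstruction theory (and the $\Ext$-groups governing the ambient Quot scheme) split as a direct sum over $i$, so the local rings match. For the Behrend function, one invokes the multiplicativity $\nu_{X\times Y}=\nu_X\boxtimes\nu_Y$ (Behrend \cite{Beh09}), which applies here since $\Hilb^{\bm{n}}(\sh{A})$ is, étale-locally or even Zariski-locally over $\prod_i\Sym^{n_i}(X_i)$, literally the product of the factors; hence $\chi_{\vir}(\Hilb^{\bm{n}}(\sh{A}))=\prod_i\chi_{\vir}(\Hilb^{n_i}(\sh{A})_{X_i})$.

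Assembling, we compute
\[
Z^{\sh{A}}(t)=\sum_{n=0}^\infty\Big(\sum_{|\bm{n}|=n}\prod_{i=1}^r\chi_{\vir}\big(\Hilb^{n_i}(\sh{A}),\Hilb^{n_i}(\sh{A})_{X_i}\big)\Big)t^n=\prod_{i=1}^r\Big(\sum_{n_i=0}^\infty\chi_{\vir}\big(\Hilb^{n_i}(\sh{A}),\Hilb^{n_i}(\sh{A})_{X_i}\big)t^{n_i}\Big)=\prod_{i=1}^r Z^{\sh{A}}_{X_i}(t),
\]
where the middle equality is just the multiplication rule for formal power series (legitimate because every factor has constant term $1$).

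\textbf{Main obstacle.} The set-theoretic bijection and the power-series bookkeeping are routine; the delicate point is to justify that $\Hilb^{\bm{n}}(\sh{A})\to\prod_i\Hilb^{n_i}(\sh{A})_{X_i}$ is an \emph{isomorphism of schemes} rather than merely a bijection on points, i.e.\ that the natural transformation of functors of points is an isomorphism. This requires checking that a flat family of $\sh{A}$-modules with support-profile $\bm{n}$ splits as a product family — which follows because the support strata are pairwise disjoint closed-open in a neighborhood of the support, so the decomposition $\sh{F}=\bigoplus\sh{F}_x$ propagates in families — and that this respects the cyclic-generator data. Once the scheme isomorphism is in hand, compatibility with $\nu$ is immediate from the analytic-local product structure and the multiplicativity of the Behrend function.
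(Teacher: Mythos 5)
Your plan is the same as the paper's: stratify $\Hilb^n(\sh{A})$ by support profiles $(n_1,\dots,n_r)$ via Hilbert--Chow, identify $\Hilb^{\bm{n}}(\sh{A})$ with $\prod_i \Hilb^{n_i}(\sh{A})_{X_i}$, then match Behrend functions and multiply the generating series. The combinatorial bookkeeping and the identification of the stratum are fine.

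There is, however, a genuine gap in the Behrend-function step. The quantity $\chi_{\vir}\big(\Hilb^n(\sh{A}),\Hilb^n(\sh{A})_Z\big)$ uses $\nu_{\Hilb^n(\sh{A})}$, the Behrend function of the \emph{ambient} Hilbert scheme, restricted to the locally closed stratum — it is \emph{not} the Behrend function of the stratum regarded as its own scheme. So an isomorphism of schemes $\Hilb^{\bm{n}}(\sh{A})\cong\prod_i \Hilb^{n_i}(\sh{A})_{X_i}$, together with multiplicativity $\nu_{A\times B}=\nu_A\boxtimes\nu_B$, only tells you that the Behrend function \emph{of the stratum} factors; it says nothing about $\nu_{\Hilb^n(\sh{A})}$ restricted to it. When you write ``$\chi_{\vir}(\Hilb^{\bm n}(\sh A))=\prod_i\chi_{\vir}(\Hilb^{n_i}(\sh A)_{X_i})$'' you are implicitly replacing the ambient Behrend function by the intrinsic one, which is exactly the unjustified step.

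What you actually need — and what the paper proves — is a statement about a \emph{neighborhood of the stratum inside $\Hilb^n(\sh{A})$}: for each point $p=[\sh A\to\sh F]$ in the stratum, pick pairwise-disjoint analytic opens $U_i\subset X$ with $\supp(\sh F)\cap X_i\subset U_i$ and $\overline{U_i}\cap\overline{U_j}=\varnothing$; then $\Hilb^n(\sh{A})_{\coprod_i U_i}$ is analytic \emph{open} in $\Hilb^n(\sh{A})$ and is analytically isomorphic to the open subset $\prod_i\Hilb^{n_i}(\sh{A})_{U_i}$ of $\prod_i\Hilb^{n_i}(\sh{A})$. Because the Behrend function is an analytic-local invariant and restricts along open inclusions, this gives $\nu_{\Hilb^n(\sh{A})}(p)=\prod_i\nu_{\Hilb^{n_i}(\sh{A})}(p_i)$, which is what closes the proof. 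Your phrase ``analytic-local product structure'' gestures at this, but you never establish that the \emph{ambient} Hilbert scheme, not merely the stratum, is analytically a product near the stratum; that is the missing ingredient.
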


\begin{proof}
Write $\pi_n$ for the set of $r$-tuples $(n_1,\ldots,n_r)$ of non-negative integers such that $n_1+\ldots+n_r=n$. The stratification $X=\coprod_i X_{i}$ induces a stratification
\[
\Hilb^n(\sh{A})=\coprod_{(n_1,\ldots,n_r)\in \pi_n}\Hilb^n(\sh{A})_{(n_1,\ldots,n_r)},
\]
where $\Hilb^n(\sh{A})_{(n_1,\ldots,n_r)}$ parameterizes quotients $\sh{A}\to\sh{F}$ such that $\sh{F}|_{X_i}$ is of length $n_i$ for all $i$. Then
\begin{equation}\label{eq:hilb_strata_sup}
    \Hilb^n(\sh{A})_{(n_1,\ldots,n_r)}\cong\prod_{i=1}^r\Hilb^{n_i}(\sh{A})_{X_i}.
\end{equation}

It remains to show that the isomorphism \eqref{eq:hilb_strata_sup} induces an isomorphism
\[
\big(\Hilb^n(\sh{A})_{(n_1,\ldots,n_r)},\nu_n\big)\cong\prod_{i=1}^r\big(\Hilb^{n_i}(\sh{A})_{X_i},\nu_{n_i}\big)
\]
in $\ConSch$, where $\nu_n$ is the Behrend function on $\Hilb^n(\sh{A})$. Let $p=[\sh{A}\to\sh{F}]\in\Hilb^n(\sh{A})$ be a closed point. We choose analytic open subsets $U_i\subset X$ such that $\overline{U_i}\cap\overline{ U_j}=\varnothing$ and 
\[
\supp(\sh{F})\cap X_i\subset U_i.
\]
Then $\Hilb^n(\sh{A})_{\coprod_i U_i}$ is analytic open in $\Hilb^n(\sh{A})_{\coprod_i U_i}\subset\Hilb^n(\sh{A})$, and 
\[
\Hilb^n(\sh{A})_{\coprod_i U_i}\cong\prod_i\Hilb^{n_i}(\sh{A})_{U_i}\subset\prod_i\Hilb^{n_i}(\sh{A}).
\]
Thus
\[
\begin{split}
    \big(\Hilb^n(\sh{A})_{\coprod_i U_i},\nu_n\big) &=\big(\Hilb^n(\sh{A})_{\coprod_i U_i},\nu_{\Hilb^n(\sh{A})_{\coprod_i U_i}}\big) \\
    & \cong \prod_i\big(\Hilb^{n_i}(\sh{A})_{U_i},\nu_{\Hilb^{n_i}(\sh{A})_{U_i}}\big)=\prod_i\big(\Hilb^{n_i}(\sh{A})_{U_i},\nu_{n_i}\big).
\end{split}
\]
which completes the proof.
\end{proof}

As in the classical case, $\Hilb^n(X,\sh{A})_S$ has a standard stratification indexed by partitions of $n$. The Hilbert--Chow map sends the stratum $\Hilb^n(X,\sh{A})_{S,(n)}$ to the diagonal $S\hookrightarrow\Sym^n(S)$.

\begin{proposition}\label{prop:local-generating}
Suppose the induced morphism
\[
\big(\Hilb^n(\sh{A})_{S,(n)},\nu_n\big)\to S
\]
is an analytic-local fibration with fibre $(F_n,\mu_{n})$. Then
\[
Z^{\sh{A}}_S(t)= \left(\sum_{n=0}^{\infty}\,\chi(F_n,\mu_n)\,t^n\right)^{\chi(S)}.
\]
\end{proposition}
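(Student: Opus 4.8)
The plan is to reduce the statement to the multiplicativity of the Behrend-weighted Euler characteristic along the Hilbert--Chow map, combined with the power-structure formula for symmetric products. First I would use the standard stratification of $\Hilb^n(\sh{A})_S$ indexed by partitions $\lambda = (1^{m_1} 2^{m_2} \cdots)$ of $n$: a point $[\sh{A}\to\sh{F}]$ lies in the stratum $\Hilb^n(\sh{A})_{S,\lambda}$ if its image under the Hilbert--Chow map $\Hilb^n(\sh{A})_S \to \Sym^n(S)$ is a cycle of the form $\sum m_k x_k$ with the $x_k$ distinct, i.e. the support consists of $m_k$ points each carrying a quotient of length $k$. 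Since the $\sh{A}$-action is local, near such a configuration one can separate the points by disjoint analytic neighborhoods $\{V_\alpha\}$, and Proposition~\ref{prop:hilb_strata_sup} (applied to the decomposition of an analytic neighborhood of the support into the disjoint pieces) gives, over the open locus of $\Sym^n(S)$ where the $m_k$ length-$k$ points are distinct, an isomorphism in $\ConSch$
\[
\big(\Hilb^n(\sh{A})_{S,\lambda},\nu_n\big) \;\cong\; \Big(\prod_k \mathrm{Sym}^{m_k}\big(\Hilb^k(\sh{A})_{S,(k)},\nu_k\big)\Big)^{\!\circ},
\]
where the superscript $\circ$ denotes the open substratum where all the chosen points are pairwise distinct across all $k$, and $\mathrm{Sym}^{m}(Y,\nu)$ carries the symmetrized constructible function.

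Next I would invoke the hypothesis: $\big(\Hilb^k(\sh{A})_{S,(k)},\nu_k\big)\to S$ is an analytic-local fibration with fibre $(F_k,\mu_k)$. By the lemma preceding this proposition (multiplicativity of $\chi$ for analytic-local fibrations), $\chi\big(\Hilb^k(\sh{A})_{S,(k)},\nu_k\big) = \chi(S)\cdot\chi(F_k,\mu_k)$; more is true, namely that for the purposes of computing the weighted Euler characteristic of any symmetric power of it (and of the open ``distinct points'' loci), the constructible space $\big(\Hilb^k(\sh{A})_{S,(k)},\nu_k\big)$ behaves as $(S,1)\times(F_k,\mu_k)$, since Euler characteristic is computed locally on $S$ and is multiplicative in products. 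Then I would assemble: summing over all partitions $\lambda$ and using that the Euler characteristic of $\big(\Hilb^n(\sh{A})_S,\nu_n\big)$ is the sum over strata,
\[
Z^{\sh{A}}_S(t) \;=\; \sum_{n\geq 0} \sum_{\lambda\vdash n} \chi\Big(\big(\Hilb^n(\sh{A})_{S,\lambda},\nu_n\big)\Big)\, t^n,
\]
and the combinatorics of symmetric products turns this into the plethystic exponential. Concretely, writing $P(t) = \sum_{k\geq 0}\chi(F_k,\mu_k)t^k$, the contribution of configurations of $m_k$ distinct length-$k$ points for each $k$ is governed by $\chi\big(\mathrm{Sym}^{m}(S\times F_k)\big)$ restricted to distinct tuples; the classical identity $\sum_n \chi(\mathrm{Sym}^n Y)t^n = (1-t)^{-\chi(Y)}$ together with the fact that the ``all points distinct'' open locus has the same Euler characteristic as the full symmetric product (the diagonal locus in $\mathrm{Sym}^m$ has strictly smaller dimension but the same $\chi$ only after the alternating cancellations — more precisely one uses that for a power structure the generating series over all distinct-support configurations is exactly $\prod_k \big(\text{series in } \chi(S)\chi(F_k)\big)$) yields
\[
Z^{\sh{A}}_S(t) \;=\; \prod_{k\geq 1}\big(1 - \text{(stuff)}\big)^{-\chi(S)} \;=\; \bigg(\sum_{n\geq 0}\chi(F_n,\mu_n)\,t^n\bigg)^{\chi(S)}.
\]

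The main obstacle I anticipate is the bookkeeping in the last step: making precise that the Hilbert--Chow stratification, together with the analytic-local fibration hypothesis, upgrades to a genuine identity of Behrend-weighted Euler characteristics with the power structure on the Grothendieck ring of $\ConSch$-objects — i.e. showing that $\chi$ of the stratum $\Hilb^n(\sh{A})_{S,\lambda}$ equals the corresponding coefficient of $P(t)^{\chi(S)}$. The cleanest route is to recognize $P(t)^{\chi(S)}$ as the value of the standard power structure (as in the work of Gusein-Zade--Luengo--Melle-Hernández on motivic/Euler-characteristic power structures, and as already used for punctual Hilbert schemes of $\mathbb{C}^3$) and to observe that: (i) the Hilbert--Chow fibres over a point of $\mathrm{Sym}^n(S)$ supported at distinct points $x_1,\dots,x_r$ with multiplicities factor as products of punctual fibres $F_{k_1}\times\cdots\times F_{k_r}$ by the local nature of $\sh{A}$-modules (Lemma on $\sh{F}=\bigoplus_x\sh{F}_x$); (ii) Euler characteristic is motivic and multiplicative, so it is compatible with the power structure; and (iii) the Behrend function localizes analytically, so $\nu_n$ restricted to each stratum is the external product of the $\mu_{k_i}$'s, exactly as in the $\ConSch$-isomorphism above. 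Granting these three points — each of which is either already in the excerpt or a direct consequence of it — the identity is the definition of the power structure, and the proposition follows.
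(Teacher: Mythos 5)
Your proposal follows essentially the same route as the paper's proof: stratify $\Hilb^n(\sh{A})_S$ by partitions of $n$ via Hilbert--Chow, realize each stratum as (an open locus of) a product of punctual loci $\Hilb^{k}(\sh{A})_{S,(k)}$ using the disjoint-support argument of Proposition~\ref{prop:hilb_strata_sup}, feed in the analytic-local fibration hypothesis, and finish with the standard power-structure computation from \cite[Theorem~4.11]{BF08}. The paper leaves the last step as ``a standard calculation'' while you spell out the symmetric-product bookkeeping; the content is the same.
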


\begin{proof}
This is the analogue of \cite[Theorem 4.11]{BF08}, but we use analytic open cover instead of \'{e}tale cover. The main idea is that there is a stratification
\[
\Hilb^n(\sh{A})_{S}=\coprod_{\alpha\vdash n}\,\Hilb^n(\sh{A})_{S,\alpha}
\]
and for each $\alpha=(\alpha_1\leq\ldots\leq\alpha_r)\vdash n$,
\[
\Hilb^n(\sh{A})_{S,\alpha}\subset\prod_{i=1}^r\Hilb^{\alpha_i}(\sh{A})_{S,(\alpha_i)}.
\]
By the same argument in Proposition~\ref{prop:hilb_strata_sup}, we get that
\[
\big(\Hilb^n(\sh{A})_{S,\alpha},\nu_n\big)\subset\prod_{i=1}^r\big(\Hilb^{\alpha_i}(\sh{A})_{S,(\alpha_i)},\nu_{\alpha_i}\big)
\]
is an immersion in $\ConSch$, and by assumption, the right hand side is an analytic-local fibration over $S$ with fibre $(F_{\alpha_i},\nu_{\alpha_i})$. Then the formula follows from a standard calculation.
\end{proof}

\subsection{Computation of DT invariants}

Let $(X,\sh{A})$ be the quantum Fermat quintic threefold. We apply Proposition~\ref{prop:hilb_strata_sup} to the stratification \eqref{eq:strat-X} of $X$ and obtain
\[
Z^{\sh{A}}(t)=\prod_{i=0}^3\, Z^{\sh{A}}_{X_{(i)}}(t).
\]

\begin{theorem}\label{thm:fibration}
For each $i$, 
\[
(\Hilb^n(\sh{A})_{X_{(i)},(n)},\nu_n)\to X_{(i)}
\]
is an analytic local fibration with fibre $(\Hilb^n\big(\mathbb{C}^3,\sh{J}_{(i)})_0,\mu_n\big)$, where $\sh{J}_{(i)}$'s are the sheaves of algebras on $\mathbb{C}^3$ defined in Theorem~\ref{thm:local-model}, and $\mu_n$ is the Behrend function of $\Hilb^n(\mathbb{C}^3,\sh{J}_{(i)})$.
\end{theorem}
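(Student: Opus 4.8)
The plan is to reduce the statement to a purely local question on $\mathbb{C}^3$ and then apply the fibration machinery that has just been set up. First I would recall that by Theorem~\ref{thm:local-model} (together with Remark~\ref{rmk:chart}) we can cover each stratum $X_{(i)}$ by finitely many analytic charts $U\to\mathbb{C}^3$ such that $\sh{A}|_U\cong\sh{J}_{(i)}|_U$ as sheaves of non-commutative algebras, and these charts carry $U\cap X_{(i)}$ to the distinguished locus in $\mathbb{C}^3$ (namely $(y=z=0)$ when $i=1$, $(z=0)$ when $i=2$, everything when $i=3$, and the origin when $i=0$). By Corollary~\ref{cor:GAGA}, such an isomorphism of sheaves of algebras on $U$ induces an analytic isomorphism $\Hilb^n(\sh{A})_U\cong\Hilb^n(\mathbb{C}^3,\sh{J}_{(i)})_U$, and since the Behrend function depends only on the analytic germ, this identification is compatible with the Behrend functions $\nu_n$ and $\mu_n$, i.e.\ it is an isomorphism in $\ConSch$ after restricting to preimages.

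Next I would pass to the substrata lying over the small diagonal. Restricting the above analytic isomorphism to the locally closed pieces supported on the respective stratum gives, for each chart,
\[
\big(\Hilb^n(\sh{A})_{U\cap X_{(i)},(n)},\nu_n\big)\cong\big(\Hilb^n(\mathbb{C}^3,\sh{J}_{(i)})_{S_i,(n)},\mu_n\big),
\]
where $S_i\subset\mathbb{C}^3$ is the distinguished locally closed locus. So it suffices to show that the Hilbert--Chow map $\big(\Hilb^n(\mathbb{C}^3,\sh{J}_{(i)})_{S_i,(n)},\mu_n\big)\to S_i$ is an analytic-local fibration with fibre $\big(\Hilb^n(\mathbb{C}^3,\sh{J}_{(i)})_0,\mu_n\big)$ — the subscheme of quotients set-theoretically supported at the origin — and then glue. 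For $i=0$ the locus $S_0$ is a single point (the origin), so the statement is vacuous; for $i\neq 0$ one uses the explicit product description of $\sh{J}_{(i)}$ from Theorem~\ref{thm:local-model}: away from the origin in the relevant coordinates, $\sh{J}_{(i)}$ becomes $M_5(\mathbb{C})^{\oplus 5^{i-1}}$ tensored with a smooth étale cover of the $z$-line (or is already of that form), so the punctual Hilbert scheme over a moving point $p\in S_i$ is, analytic-locally in $p$, constant — translation in the flat directions of $S_i$ gives the local triviality. Concretely one picks, for each $p\in S_i$, a small polydisc $U$ in which the translation $\mathbb{C}^3\to\mathbb{C}^3$ carrying $p$ to the origin is compatible (up to the $5$-th-root choices of Theorem~\ref{thm:local-model}) with the algebra structure, producing the required local isomorphism $\big(\Hilb^n(\sh{J}_{(i)})_{S_i\cap U,(n)},\mu_n\big)\cong (S_i\cap U)\times\big(\Hilb^n(\sh{J}_{(i)})_0,\mu_n\big)$ in $\ConSch$.

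Finally I would glue the charts: since the charts covering $X_{(i)}$ are analytic open and the local fibration structures agree on overlaps (both being pulled back from the same $\sh{J}_{(i)}$), they patch to an analytic-local fibration $\big(\Hilb^n(\sh{A})_{X_{(i)},(n)},\nu_n\big)\to X_{(i)}$ with fibre $\big(\Hilb^n(\mathbb{C}^3,\sh{J}_{(i)})_0,\mu_n\big)$, which is exactly the assertion. I expect the main obstacle to be the bookkeeping in the middle step: one must check that the analytic identification $\sh{A}|_U\cong\sh{J}_{(i)}|_U$ can be chosen to vary compatibly as the base point $p$ moves within a single chart — in particular that the choice of holomorphic $5$-th roots in the proof of Theorem~\ref{thm:local-model} does not obstruct the translation-invariance along the flat directions of $S_i$ — and that all these identifications are genuinely isomorphisms in $\ConSch$ (i.e.\ strictly respect the Behrend functions), so that the product formula in Proposition~\ref{prop:local-generating} applies. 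The rest is a routine assembly of Corollary~\ref{cor:GAGA}, the explicit models of Theorem~\ref{thm:local-model}, and the ``Euler characteristics from an open cover'' principle already used above.
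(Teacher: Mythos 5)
Your proposal follows the same two-step strategy as the paper's proof: transport the question to $(\mathbb{C}^3,\sh{J}_{(i)})$ via the analytic charts of Theorem~\ref{thm:local-model} and Corollary~\ref{cor:GAGA}, then exploit translation along the flat directions of $S_i$. Two places where your exposition should be sharpened. First, once you have passed to $\mathbb{C}^3$, the translation group acts honestly and \emph{algebraically} on $\sh{J}_{(i)}$ --- for $i=1$ the algebra $M_5(\mathbb{C})\tens{\mathbb{C}}\bigl(\sh{O}_{\mathbb{C}^3}[v,w]/(v^5-y,w^5-z)\bigr)$ involves $x$ only through $\sh{O}_{\mathbb{C}^3}$, so the $x$-translations preserve it on the nose --- hence there is no residual issue with $5$-th-root choices at this stage (those are entirely used up in building the chart $\sh{A}|_U\cong\sh{J}_{(i)}|_U$); the paper even notes that this makes the Hilbert--Chow map over $Z=S_1$ a \emph{Zariski}-local fibration, not merely an analytic one. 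Second, the nontrivial point on the constructible-function side, which you mention only obliquely, is that the Behrend function $\mu_n$ of $\Hilb^n(\mathbb{C}^3,\sh{J}_{(i)})$ is constant on orbits of this translation action --- that is exactly what upgrades the scheme-level product $\Hilb^n(\sh{J}_{(i)})_{Z,(n)}\cong Z\times\Hilb^n(\sh{J}_{(i)})_0$ to an isomorphism in $\ConSch$. Finally, the gluing step you append is not needed: by definition an analytic-local fibration only requires a product decomposition over each member of \emph{some} analytic open cover of the base, with no compatibility demanded on overlaps, so the argument terminates as soon as each chart and each flat translate has been handled.
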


\begin{proof}
We only prove for the case $i=1$, as the proofs of the other cases are similar. First, we choose an analytic open cover $\{U_{\alpha}\}$ of charts. Since $\sh{A}|_{U_{\alpha}}\cong\sh{J}_{(1)}|_{U_{\alpha}}$, we have an analytic isomorphism
\[
\Hilb^n(\sh{A})_{U_{\alpha}}\cong\Hilb^n(\mathbb{C}^3,\sh{J}_{(1)})_{U_{\alpha}},
\]
by Corollary~\ref{cor:GAGA}, which gives an isomorphism
\begin{equation}\label{eq:temp1}
    \big(\Hilb^n(\sh{A})_{U_{\alpha}},\nu_n\big)\cong\big(\Hilb^n(\mathbb{C}^3,\sh{J}_{(1)})_{U_{\alpha}},\mu_n\big)
\end{equation}
in $\ConSch$. Let $Z$ be the locus of $\mathbb{C}^3$ defined by $y=z=0$. Observe that
\[
\sh{J}_{(1)} = M_{5}(\mathbb{C})\tens{\mathbb{C}}\Big(\sh{O}_{\mathbb{C}^3}[v,w]\big/\left(v^5-y, w^5-z\right)\Big)
\]
is invariance under the translation of $x$, the Hilbert--Chow map
\[
\big(\Hilb^n(\mathbb{C}^3,\sh{J}_{(1)})_{Z,(n)},\mu_n)\to Z
\]
is a \emph{Zariski} local fibration with fibre $\big(\Hilb^n(\mathbb{C}^3,\sh{J}_{(i)})_0,\mu_n\big)$. Here we use the fact that the Behrend function is constant on orbits of a group action.

Finally, since the chart $U_{\alpha}\to\mathbb{C}^3$ map $X_{(1)}$ into $Z$ (see Remark~\ref{rmk:chart}), the isomorphism \eqref{eq:temp1} induces an isomorphism 
\[
\big(\Hilb^n(\sh{A})_{U_{\alpha}\cap X_{(1)}},\nu_n\big)\cong\big(\Hilb^n(\mathbb{C}^3,\sh{J}_{(1)})_{U_{\alpha}\cap Z},\mu_n\big)
\]
Then the theorem follows.
\end{proof}

Next we deal with the sheaves $\sh{J}_{(i)}$ of algebras before we state our main theorem. Recall that for $i\neq 0$, $\sh{J}_{(i)}$ can be written as direct sums of $M_5(\sh{O}_{\mathbb{C}^3})$ (see Remark~\ref{rmk:about-J}).

\begin{lemma}
If $n$ is not a multiple of $5$, then $\Hilb^n\big(\mathbb{C}^3,M_5(\sh{O}_{\mathbb{C}^3})\big)=\varnothing$. For $n=5k$, there is a canonical isomorphism
\[
\Hilb^{5k}\big(\mathbb{C}^3, M_5(\sh{O}_{\mathbb{C}^3})\big)\cong\Quot^k(\mathbb{C}^3,\sh{O}_{\mathbb{C}^3}^{\oplus 5}).
\]
\end{lemma}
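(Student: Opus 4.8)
## Proof proposal

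The plan is to use Morita equivalence to reduce the ring $M_5(\sh{O}_{\mathbb{C}^3})$ to $\sh{O}_{\mathbb{C}^3}$, and to track what happens to cyclic modules under this equivalence. Recall that the category of $M_5(\sh{O}_{\mathbb{C}^3})$-modules is equivalent to the category of $\sh{O}_{\mathbb{C}^3}$-modules via the standard bimodule functors $P = \sh{O}_{\mathbb{C}^3}^{\oplus 5}$ (the column vectors, a right $M_5$-, left $\sh{O}_{\mathbb{C}^3}$-module) and $P^\vee = \sh{O}_{\mathbb{C}^3}^{\oplus 5}$ (row vectors): the functor $M \mapsto P^\vee \tens{M_5(\sh{O}_{\mathbb{C}^3})} M$ sends a left $M_5(\sh{O}_{\mathbb{C}^3})$-module to a left $\sh{O}_{\mathbb{C}^3}$-module, with inverse $N\mapsto P \tens{\sh{O}_{\mathbb{C}^3}} N$. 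Under this equivalence, the regular module $M_5(\sh{O}_{\mathbb{C}^3})$ (as a left module over itself) corresponds to $\sh{O}_{\mathbb{C}^3}^{\oplus 5}$.

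First I would observe that a point of $\Hilb^n\big(\mathbb{C}^3, M_5(\sh{O}_{\mathbb{C}^3})\big)$ is a surjection $M_5(\sh{O}_{\mathbb{C}^3}) \twoheadrightarrow \sh{F}$ of left $M_5(\sh{O}_{\mathbb{C}^3})$-modules with $\sh{F}$ of finite length $n$ (as an $\sh{O}_{\mathbb{C}^3}$-module), i.e. a cyclic module generated by the image of the identity matrix. Applying the Morita functor, this is the same datum as a surjection $\sh{O}_{\mathbb{C}^3}^{\oplus 5} \twoheadrightarrow \sh{G}$ of $\sh{O}_{\mathbb{C}^3}$-modules, where $\sh{G} = P^\vee \tens{} \sh{F}$. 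Since Morita equivalence is exact and the bimodule $P^\vee$ is locally free of rank $5$ over $\sh{O}_{\mathbb{C}^3}$, we have that the length of $\sh{G}$ is $n/5$ — in particular $\Hilb^n$ is empty unless $5\mid n$, which gives the first claim. For $n = 5k$, a surjection $\sh{O}_{\mathbb{C}^3}^{\oplus 5}\twoheadrightarrow \sh{G}$ with $\sh{G}$ of length $k$ and zero-dimensional support is exactly a point of $\Quot^k(\mathbb{C}^3, \sh{O}_{\mathbb{C}^3}^{\oplus 5})$. One must check this is an isomorphism of schemes, not just a bijection on closed points: the Morita functors are defined by tensoring with a flat (indeed locally free) bimodule, so they carry flat families to flat families in both directions and are mutually inverse on families, hence induce an isomorphism of the representing functors, thus of the moduli schemes. (The surjectivity on the generator matches up because $P^\vee\tens{}(-)$ sends the cyclic generator $1\in M_5$ to the standard generating set of $\sh{O}_{\mathbb{C}^3}^{\oplus 5}$, and conversely; cyclicity over $M_5(\sh{O}_{\mathbb{C}^3})$ corresponds precisely to generation by the $5$ standard sections over $\sh{O}_{\mathbb{C}^3}$.)

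The main obstacle is the scheme-theoretic (rather than set-theoretic) identification, in particular being careful about what "cyclic" translates to and matching the framing/quotient data on families. Concretely, one needs: (i) that the canonical surjection $M_5(\sh{O}_{\mathbb{C}^3}) \to \sh{F}$ is equivalent to an honest quotient object, and under Morita this is literally the quotient $\sh{O}_{\mathbb{C}^3}^{\oplus 5}\to \sh{G}$ with no further framing choices (the "cyclic vector" $1\in M_5$ is rigid, i.e. unique up to automorphisms that act trivially on the quotient — really one should note $\Hilb^\dd(Q,W)$ here parameterizes quotients, not framed representations with a chosen vector, because the framing vector is $(1,\dots,1)$); and (ii) that the two Quot-type functors agree over an arbitrary base, which follows from exactness and compatibility of the Morita functors with base change since the relevant bimodules are locally free of finite rank. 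I would spell out the functor-of-points argument: for a test scheme $T$, an $M_5(\sh{O}_{\mathbb{C}^3\times T})$-flat quotient of $M_5(\sh{O}_{\mathbb{C}^3\times T})$ corresponds under $P^\vee\tens{}(-)$ to an $\sh{O}_{\mathbb{C}^3\times T}$-flat quotient of $\sh{O}_{\mathbb{C}^3\times T}^{\oplus 5}$, functorially in $T$, giving the claimed isomorphism of schemes. The emptiness statement and the length bookkeeping are then immediate corollaries of the rank-$5$ flatness of the Morita bimodule.
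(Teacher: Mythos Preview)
Your proposal is correct and follows essentially the same approach as the paper: both use the Morita equivalence between $M_5(\sh{O}_{\mathbb{C}^3})$-modules and $\sh{O}_{\mathbb{C}^3}$-modules (via the standard rank-$5$ bimodule) to see that lengths scale by $5$ and that cyclic $M_5(\sh{O}_{\mathbb{C}^3})$-quotients correspond to quotients of $\sh{O}_{\mathbb{C}^3}^{\oplus 5}$. The paper phrases the surjectivity translation as a concrete elementary fact about maps $M_n(R)\to R^{\oplus n}\otimes M$ and the induced map $\delta(s):R^{\oplus n}\to M$, whereas you argue more functorially via exactness and base-change compatibility of the Morita bimodules, but the content is the same.
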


\begin{proof}
The Morita equivalence 
\[
\Coh(\mathbb{C}^3)\to \Coh(M_5(\sh{O}_{\mathbb{C}^3}))
\]
is given by $\mathbb{C}^{\oplus 5}\otimes_{\mathbb{C}}-$, where $\mathbb{C}^{\oplus 5}$ is the canonical representation of $M_5(\mathbb{C})$. This implies that the dimension of a $M_5(\sh{O}_{\mathbb{C}^3})$-module must be a multiple of $5$.

The isomorphism is a direct consequence of the following fact: Let $R$ be a commutative ring, and $M$ be an $R$-module. For any $n$, we consider the simple $M_n(R)$-module $R^{\oplus n}$. Then a morphism
\[
s:M_n(R)\to R^{\oplus n}\otimes M
\]
of $M_n(R)$-modules is surjective if and only if the induced morphism
\[
\delta(s): R^{\oplus n}\hookrightarrow M_n(R)\xrightarrow{s} R^{\oplus n}\otimes M\to M
\]
of $R$-modules is surjective, where the first map is the diagonal map, and the last map is defined by $(r_1,\ldots,r_n)\otimes m\mapsto\sum_i r_im$.
\end{proof}

\begin{theorem}\label{thm:main-computation}
We have
\[
Z^{\sh{A}}(t)=\Big(Z^{Q,W}(t)\Big)^{10}\Big(M(-t^5)\Big)^{-50},
\]
where $M(t)$ is the MacMahon function.
\end{theorem}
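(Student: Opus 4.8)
The plan is to combine the stratification result (Theorem~\ref{thm:fibration}), the product formula (Proposition~\ref{prop:hilb_strata_sup}) and the structure of the local models $\sh{J}_{(i)}$ to reduce everything to two known inputs: the generating function $Z^{Q,W}(t)$ of the quiver with potential and the degree-zero DT series of $\mathbb{C}^3$ (the MacMahon function). First I would apply Proposition~\ref{prop:hilb_strata_sup} to the stratification \eqref{eq:strat-X} to write
\[
Z^{\sh{A}}(t)=\prod_{i=0}^3 Z^{\sh{A}}_{X_{(i)}}(t),
\]
and then feed Theorem~\ref{thm:fibration} into Proposition~\ref{prop:local-generating}: the stratum $Z^{\sh{A}}_{X_{(i)}}(t)$ equals $\big(\sum_n \chi(\Hilb^n(\mathbb{C}^3,\sh{J}_{(i)})_0,\mu_n)\,t^n\big)^{\chi(X_{(i)})}$, which is exactly the ``punctual'' generating function of $\sh{J}_{(i)}$ raised to the Euler characteristic of the stratum. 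So I need two things: (a) the Euler characteristics $\chi(X_{(i)})$, and (b) the punctual series $\sum_n\chi_{\vir}(\Hilb^n(\mathbb{C}^3,\sh{J}_{(i)})_0)\,t^n$ for each $i$.

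For (a), $X$ is the hyperplane $x_0+\dots+x_4=0$ in $\mathbb{P}^4$, and $X_{(i)}$ is the locus with exactly $i+2$ of the five coordinates nonzero; I would count these by a straightforward inclusion-exclusion, e.g. each choice of a size-$(i+2)$ support subset contributes a torus-type piece whose Euler characteristic is easily computed, giving $\chi(X_{(0)})=10$, $\chi(X_{(1)})=?$, etc.---the key point being that only $i=0$ and $i=3$ will survive in the final answer once the corresponding punctual series are inserted (strata with vanishing contribution exponent, or with punctual series equal to $1$, drop out). Actually the cleaner bookkeeping: $\chi(X_{(0)})=10$ and the total contribution of $i=1,2,3$ will collapse because $\sh{J}_{(i)}$ for $i\neq 0$ is (Morita-equivalently, by the Remark after Theorem~\ref{thm:local-model} and Remark~\ref{rmk:about-J}) a sum of copies of $\mathbb{C}[x,y,z]$, so its punctual Hilbert series is governed by the MacMahon function; I would use the preceding Lemma identifying $\Hilb^{5k}(M_5(\sh{O}_{\mathbb{C}^3}))$ with $\Quot^k(\mathbb{C}^3,\sh{O}^{\oplus 5})$ together with the known result (Behrend--Fantechi / the $\mathbb{C}^3$ computation) that the degree-zero \emph{punctual} DT series of $\mathbb{C}^3$ is $M(-t)$, and for $\sh{O}^{\oplus r}$ a suitable power, to get the punctual series of $\sh{J}_{(i)}$. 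Summing the exponents $\chi(X_{(i)})\cdot(\text{number of }M_5\text{-summands of }\sh{J}_{(i)})$ over $i=1,2,3$ should give $50$, with an overall sign $M(-t^5)^{-1}$ per simple-module block because the $5$-dimensional simples on each stratum force $t\mapsto t^5$ and the Behrend sign contributes the $-1$.

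For (b) at $i=0$: by definition $\Hilb^n(\sh{J})_0$ is the punctual Hilbert scheme of the sheaf $\sh{J}=\Jac(Q,W)$ viewed over $\mathbb{C}^3=\Spec\mathbb{C}[u^5,v^5,w^5]$, supported at the origin; but $\sh{J}$-modules supported at the origin are exactly finite-dimensional $\Jac(Q,W)$-modules, and cyclic such modules (with the $\sh{J}$-module generator $1$) are precisely $(1,\dots,1)$-framed representations of $(Q,W)$. Hence $\sum_n\chi_{\vir}(\Hilb^n(\sh{J})_0)\,t^n=Z^{Q,W}(t)$ on the nose. Then from $\chi(X_{(0)})=10$ and Proposition~\ref{prop:local-generating} the $i=0$ factor contributes $\big(Z^{Q,W}(t)\big)^{10}$, and assembling with the $i=1,2,3$ factors yields $Z^{\sh{A}}(t)=\big(Z^{Q,W}(t)\big)^{10}\big(M(-t^5)\big)^{-50}$.

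The main obstacle I anticipate is bookkeeping the exponent $-50$ correctly: this requires (i) pinning down $\chi(X_{(i)})$ for $i=1,2,3$ by the inclusion-exclusion on coordinate hyperplanes in the hyperplane $X\subset\mathbb{P}^4$, (ii) counting the number of $M_5(\mathbb{C})$-summands of each $\sh{J}_{(i)}$ (namely $1,5,25$ for $i=1,2,3$), (iii) checking via the $\Quot$-scheme identification and the known $\mathbb{C}^3$ result that the punctual series of a single $M_5(\sh{O}_{\mathbb{C}^3})$-block is $M(-t^5)^{-1}$ (the exponent $-1$ and the $t\mapsto -t^5$ both need care: the sign is the Behrend contribution and the $t^5$ is because every nonempty $\Hilb$ has length divisible by $5$), and then verifying $\sum_{i=1}^3 \chi(X_{(i)})\cdot 5^{i-1}=50$. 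Everything else is a formal consequence of the structural results already proved; no genuinely new geometric input is needed beyond the cited $\mathbb{C}^3$ computation.
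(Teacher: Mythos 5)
Your overall skeleton matches the paper's: stratify via Proposition~\ref{prop:hilb_strata_sup}, feed Theorem~\ref{thm:fibration} into Proposition~\ref{prop:local-generating} to get
\(Z^{\sh{A}}(t)=\prod_{i=0}^{3}\big(Z^{\mathbb{C}^3,\sh{J}_{(i)}}_0(t)\big)^{\chi(X_{(i)})}\),
identify $\sh{J}_{(i)}$ for $i\neq 0$ with copies of $M_5(\sh{O}_{\mathbb{C}^3})$, and then compute. But there are two substantive problems.

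\textbf{The $i=0$ factor is not correctly handled.} You assert that ``\,$\sh{J}$-modules supported at the origin are exactly finite-dimensional $\Jac(Q,W)$-modules'' and hence that the punctual series $\sum_n\chi_{\vir}(\Hilb^n(\mathbb{C}^3,\sh{J})_0)\,t^n$ equals $Z^{Q,W}(t)$ ``on the nose.'' This is false: $\sh{J}$-modules supported at the origin of $\mathbb{C}^3=\Spec\mathbb{C}[u^5,v^5,w^5]$ are exactly the finite-dimensional $\Jac(Q,W)$-modules on which $u^5,v^5,w^5$ act nilpotently. The series $Z^{Q,W}(t)$ is defined via \emph{all} cyclic finite-dimensional $\Jac(Q,W)$-modules, which is the full $Z^{\mathbb{C}^3,\sh{J}}(t)$, not the punctual $Z_0^{\mathbb{C}^3,\sh{J}}(t)$. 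A priori these differ. The paper closes this gap by a bootstrap: it re-runs the same stratification argument for $(\mathbb{C}^3,\sh{J})$ with strata $\mathbb{C}^3_{(i)}$, notes that $\chi(\mathbb{C}^3_{(i)})=0$ for $i=1,2,3$ (while $\chi(\mathbb{C}^3_{(0)})=1$), and concludes $Z^{\mathbb{C}^3,\sh{J}}(t)=Z_0^{\mathbb{C}^3,\sh{J}}(t)$. Without this step you do not have the identification of the $i=0$ factor with $(Z^{Q,W}(t))^{10}$.

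\textbf{The MacMahon exponent arithmetic is wrong, and only gives $-50$ by accident.} The punctual Quot series on $\mathbb{C}^3$ for $\sh{O}^{\oplus 5}$ is $M(-t)^{5}$, so after the substitution $t\mapsto t^5$ the punctual series of a single $M_5(\sh{O}_{\mathbb{C}^3})$-block is $M(-t^5)^{5}$, not $M(-t^5)^{-1}$. Likewise, the correct weighted Euler characteristic count is $\chi(X_{(1)})+5\chi(X_{(2)})+25\chi(X_{(3)})=(-10)+5\cdot 5+25\cdot(-1)=-10$, not $50$. The paper then gets $\big(M(-t^5)^5\big)^{-10}=M(-t^5)^{-50}$. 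Your version, $\big(M(-t^5)^{-1}\big)^{50}$, lands on the same answer only because $(-1)\cdot 50=5\cdot(-10)$; both the sign and the magnitude of your per-block exponent and of your Euler characteristic sum are off, and the two errors cancel. This needs to be corrected for the argument to stand on its own. (Your $\chi(X_{(0)})=10$ is correct; incidentally the paper's written proof contains a typo ``$\chi(X_{(0)})=0$'' there.)
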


\begin{proof}
We use Proposition~\ref{prop:local-generating} with Theorem~\ref{thm:fibration} to obtain
\[
Z^{\sh{A}}(t)=\prod_{i=0}^3 \left(Z^{\mathbb{C}^3,\sh{J}_{(i)}}_0(t)\right)^{\chi(X_{(i)})}.
\]
For $i\neq 0$, we have
\[
\begin{split}
    Z^{\mathbb{C}^3,\sh{J}_{(i)}}_0(t)=&\; Z^{\mathbb{C}^3,M_5(\sh{O})^{\oplus 5^{i-1}}}_0(t) \\
    = &\, \sum_{k=0}^{\infty} \,\chi_{\vir}\Big(\Quot^k\big((\mathbb{C}^3)^{\coprod 5^{i-1}},\sh{O}^{\oplus 5}\big),\Quot^k\big((\mathbb{C}^3)^{\coprod 5^{i-1}},\sh{O}^{\oplus 5}\big)_0\Big)\, t^{5k} \\
    = &\, \left(\sum_{k=0}^{\infty} \,\chi_{\vir}\big(\Quot^k(\mathbb{C}^3,\sh{O}^{\oplus 5}),\Quot^k(\mathbb{C}^3,\sh{O}^{\oplus 5})_0\big)\, t^{5k}\right)^{5^{i-1}}.
\end{split}
\]
For the first equality, see Remark~\ref{rmk:about-J}. The second equality is given by the previous lemma, and the third equality is a standard result about Hilbert schemes of a disjoint union of schemes.

Now, DT invariants of Quot schemes of points on $\mathbb{C}^3$ are well-known:
\[
\sum_{k=0}^{\infty} \,\chi_{\vir}\big(\Quot^k(\mathbb{C}^3,\sh{O}^{\oplus 5}),\Quot^k(\mathbb{C}^3,\sh{O}^{\oplus 5})_0\big)\, t^{k}= M(-t)^5.
\]
We conclude that
\[
Z^{\sh{A}}(t)=Z_0^{\mathbb{C}^3,\sh{J}}(t)^{\chi(X_{(0)})}\cdot \Big(M(-t^5)^5\Big)^{\chi(X_{(1)})+5\chi(X_{(2)})+25\chi(X_{(3)})},
\]
with $\chi(X_{(0)})=0$ and $\chi(X_{(1)})+5\chi(X_{(2)})+25\chi(X_{(3)})=-10$.

Finally, recall that $\sh{J}_{(i)}$'s also are local models of $\sh{J}$ on strata $\mathbb{C}^3_{(i)}$ of $\mathbb{C}^3$. We repeat all above arguments to $(\mathbb{C}^3,\sh{J})$ which lead to
\[
\begin{split}
    Z^{\mathbb{C}^3,\sh{J}}(t) &=Z_0^{\mathbb{C}^3,\sh{J}}(t)^{\chi(\mathbb{C}^3_{(0)})}\cdot \Big(M(-t^5)^5\Big)^{\chi(\mathbb{C}^3_{(1)})+5\chi(\mathbb{C}^3_{(2)})+25\chi(\mathbb{C}^3_{(3)})} \\
    &=Z_0^{\mathbb{C}^3,\sh{J}}(t).
\end{split}
\]
since $\chi(\mathbb{C}^3_{(3)})=\chi(\mathbb{C}^3_{(3)})=\chi(\mathbb{C}^3_{(3)})=0$. The sheaf $\sh{J}$ on $\mathbb{C}^3$ is given by the Jacobi algebra $\Jac(Q,W)$, and by definition, finite-dimensional quotients of $\sh{J}$ are exactly framed representations of $(Q,W)$ with framing vector $(1,1,1,1,1)$. Hence by our definition, $Z^{\mathbb{C}^3,\sh{J}}(t)=Z^{Q,W}(t)$.
\end{proof}

For the generating function $Z^{Q,W}$, we have seen that $(Q,W)$ can be viewed as a quantization of an orbifold $[\mathbb{C}^3/\mu_5]$, and DT invariants of an orbifold is known to be related to \emph{colored plane partitions} \cite{You10}. We will discuss more of this in the next section, and show that $Z^{Q,W}$ can be computed using combinatorics.

%\section{A geometric interpretation}

%\begin{remark}

\subsection{A geometric interpertation}

As a final remark, we give a possible geometric interpretation of this result. Recall that finite-dimension $\sh{A}$-modules are of dimension $1$ or $5$. We denote $M^{d}_{\simple}$ the moduli space of $d$-dimensional $\sh{A}$-modules. Then Theorem~\ref{thm:local-model} implies that
\begin{enumerate}
    \item[(a)] There is a morphism $M^{1}_{\simple}\to X_{(0)}$ which is a $\mu_5$-torsor.
    \item[(b)] $M^{5}_{\simple}$ is smooth, and there is a (ramified) covering $M^{5}_{\simple}\to X\setminus X_{(0)}$, which is $5^{i-1}$-to-$1$ on the stratum $X_{(i)}$.
\end{enumerate}
In particular,
\[
\chi(M^{5}_{\simple})=\chi(X_{(1)})+5\chi(X_{(2)})+25\chi(X_{(3)}).
\]
The factor $M(-t^5)^{-50}$ can be expressed as
\[
Z^{M^{5}_{\simple}, M_5(\mathbb{\sh{O}})}(t)=\sum_{k=0}^{\infty} \chi_{\vir}\big(\Quot^{k}(M^{5}_{\simple},\sh{O}^{\oplus 5})\big)\, t^{5k}
\]
On the other hand, the $M^1_{\simple}$ consists of $50$ points, and if we consider the Ext-quiver $\tilde{Q}$ associated to $M^1_{\simple}$, then $\tilde{Q}$ is the disjoint union of $10$ copies of the quiver $Q$. Therefore we can write
\[
Z^{Q,W}(t)^{10}=Z^{\tilde{Q},\tilde{W}}(t)
\]
and
\[
Z^{\sh{A}}(t)=Z^{\tilde{Q},\tilde{W}}(t)\cdot Z^{M^{5}_{\simple}, M_5(\mathbb{\sh{O}})}(t)
\]
This strongly suggests that the DT invariants of the Calabi--Yau-3 category $\Coh(\sh{A})_{\fd}$ can be computed \emph{directly} from the moduli space of simple objects in $\Coh(\sh{A})_{\fd}$ and the Ext-quivers between simple objects.
%\end{remark}

\section{Multi-colored plane partitions}\label{sec:mcpp}

A plane partition is a finite subset $\pi$ of $\mathbb{Z}_{\geq 0}^{\oplus 3}$ such that if any of $(i+1,j,k)$, $(i,j+1,k)$, $(i,j,k+1)$ are in $\pi$, then so is $(i,j,k)$. We will refer points in a plane partition as ``boxes'', and a plane partition can be viewed as a pile of boxes stacked in the positive octant. The size $|\pi|$ is the number of boxes. We denote by $\planep$ the set of plane partitions.

\subsection{Colored plane partitions}

In this section, we recall the notion of colored plane partitions and their relation with orbifolds. These results are taken from \cite{You10}, \cite{BCY} (see also \cite{DOS}).

Let $G=\mu_r$ be the finite group of $r$-th roots of unity in $\mathbb{C}$. We consider the $\mu_r$-action on $\mathbb{C}^3$ with weights $(a,b,c)$. We will denote this action $\mu_5(a,b,c)$. We identify $\hat{G}$, the abelian group of characters, with $\mathbb{Z}/r\mathbb{Z}$.

\begin{definition}
A $\mu_r(a,b,c)$-colored plane partition is a plane partition $\pi\in\planep$ with the coloring $K:\pi\to\mathbb{Z}/r\mathbb{Z}$ defined by
\[
K(i,j,k)=ai+bj+ck.
\]
For each color $i$, let $|\pi|_i$ be the number of boxes in $\pi$ with color $i$.
\end{definition}

We define the generating function of $\mu_r(a,b,c)$-colored plane partitions
\[
Z^{\mu_r(a,b,c)}_{\PL}(t_0,\ldots,t_{r-1})=\sum_{\pi\in\planep} \,t_0^{|\pi|_0}\cdots t_{r-1}^{|\pi|_{r-1}}.
\]

For the $\mu_r(a,b,c)$-action, we consider the \emph{McKay quiver} $Q_r(a,b,c)$ whose vertices correspond to irreducible representations of $\mu_5$. Thus the set $Q_r(a,b,c)_0$ of vertices is identified with $\hat{\mu_r}\cong\mathbb{Z}/r\mathbb{Z}$. Arrows of $Q_r(a,b,c)$ are
\[
x_i:i\to i+a,\quad y_i:i\to i+b,\quad z_i:i+c
\]
for all vertex $i$. There is a natural potential
\[
W=\bm{y}\bm{x}\bm{z}-\bm{y}\bm{z}\bm{x},
\]
where $\bm{x}=\sum_i x_i$, $\bm{y}=\sum_i y_i$, and $\bm{z}=\sum_i z_i$.

Given any plane partition $\pi$, the $\mu_r(a,b,c)$-coloring defines a dimension vector
\[
|\pi|:=(|\pi|_0,\ldots,|\pi|_{r-1})\in\mathbb{Z}^{\oplus Q_r(a,b,c)_0}.
\]

On the other hand, the $\mu_r(a,b,c)$-action defines an orbifold $\orbi{X}=[\mathbb{C}^3/\mu_r]$. For any $\rho\in K_0(\text{Rep}(\mu_r))$, we consider the Hilbert scheme $\Hilb^{\rho}(\orbi{X})$ parameterizing $\mu_5$-invariant subschemes $Z\subset\mathbb{C}^3$ such that the induced $\mu_5$-representation on $\sh{O}_Z$ is in the class $\rho$. The group $K_0(\text{Rep}(\mu_r))$ is canonically identified with $\mathbb{Z}^{\oplus Q_r(a,b,c)_0}$. Furthermore, it is well-known that the Hilbert scheme $\Hilb^{\dd}(\orbi{X})$ is isomorphic to the fine moduli space $\msp{M}^{\dd,e_0}(Q_r(a,b,c),W)$ of framed representations of the quiver $Q_r(a,b,c)$ with potential $W$, with the framing vector $e_0=(1,0,\ldots,0)$. We define the generating function
\[
Z^{\orbi{X}}(t_0,\ldots,t_{r-1})=\sum_{\dd=(d_0,\ldots,d_{r-1})}\chi_{\vir}\big(\Hilb^{\dd}(\orbi{X})\big)\,t_0^{d_0}\cdots t_{r-1}^{d_{r-1}}
\]
of DT invariants of the orbifold $\orbi{X}$, which is equal to the generating function $Z^{Q_r(a,b,c),W,e_0}$ in our notation.

\begin{proposition}[\cite{BCY}]
The generating function $Z^{\orbi{X}}$ is, up to signs, given by the generating function $Z^{\mu_r(a,b,c)}_{\PL}$ of colored plane partitions. More specifically, we have
\[
Z^{\orbi{X}}(t_0,\ldots,t_{r-1})=\sum_{\pi\in\planep}(-1)^{|\pi|_0+\langle |\pi|,|\pi|\rangle}\, t_0^{|\pi|_0}\cdots t_{r-1}^{|\pi|_{r-1}},
\]
where $\langle -,-\rangle$ is the bilinear form associated to the quiver $Q_r(a,b,c)$. 
\end{proposition}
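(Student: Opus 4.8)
The plan is to reduce the statement to a purely combinatorial comparison between $\mu_r(a,b,c)$-colored plane partitions and the torus-fixed points of the Hilbert schemes $\Hilb^{\dd}(\orbi{X})$, and then to invoke virtual localization together with Behrend's theorem identifying the signed count of fixed points with the weighted Euler characteristic. First I would recall from the cited work of Young \cite{You10} that the $\mathbb{C}^*$-action on $\mathbb{C}^3$ scaling the coordinates with generic weights lifts to an action on each $\Hilb^{\dd}(\orbi{X})$ whose fixed locus is a finite set of reduced points in bijection with $\mu_r(a,b,c)$-colored plane partitions of dimension vector $\dd$: a monomial ideal $I\subset\mathbb{C}[x,y,z]$ is $\mu_r$-invariant, the complement of the staircase is the plane partition $\pi$, and the induced $\mu_r$-character of $\mathbb{C}[x,y,z]/I$ is exactly $(|\pi|_0,\ldots,|\pi|_{r-1})$ under the coloring $K(i,j,k)=ai+bj+ck$. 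Since $\Hilb^{\dd}(\orbi{X})\cong\msp{M}^{\dd,e_0}(Q_r(a,b,c),W)$ is a critical locus of a regular function on a smooth scheme, it carries a symmetric obstruction theory, and the $\mathbb{C}^*$-action is compatible with it.

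Next I would apply the Behrend-function localization principle: for a scheme $M$ with symmetric obstruction theory and a $\mathbb{C}^*$-action with proper (here finite) fixed locus $M^{\mathbb{C}^*}$, one has
\[
\chi_{\vir}(M)=\chi(M,\nu_M)=\sum_{p\in M^{\mathbb{C}^*}}(-1)^{\dim T_pM}
\]
when $M^{\mathbb{C}^*}$ is a finite set of reduced points, by Behrend--Fantechi's localization of the weighted Euler characteristic (and the fact that $\chi_{\vir}$ is a motivic-type invariant insensitive to the $\mathbb{C}^*$-action). Thus I would be left to compute, for each fixed point $p=p_{\pi}$ corresponding to a colored plane partition $\pi$, the parity of $\dim T_{p_{\pi}}M$. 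Here $M=\msp{M}^{\dd,e_0}(Q_r(a,b,c),W)$, and the tangent space sits in the deformation complex of the framed quiver representation; its virtual dimension is governed by the Euler pairing $\chi_{Q}$, while the actual dimension differs from the virtual one by twice the dimension of the obstruction space. The symmetric obstruction theory forces $\dim T_{p_{\pi}}M \equiv \dim\Ext^1 \equiv (\text{virtual contribution}) \pmod 2$, and a careful bookkeeping of the framed deformation complex $0\to\mathbb{C}\to\bigoplus_i\Hom(V_i,V_i)\to\bigoplus_{a}\Hom(V_{s(a)},V_{t(a)})\to\cdots$ yields $(-1)^{\dim T_{p_{\pi}}M}=(-1)^{|\pi|_0+\langle|\pi|,|\pi|\rangle_{Q}}$. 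The term $|\pi|_0$ comes from the single framing at the vertex $0$ (the rank-one framing $e_0$), and the $\langle\dd,\dd\rangle_Q$ term is exactly the self-Euler-pairing reading off the vertex count minus the arrow count of the McKay quiver.

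I would organize the sign computation by first treating the unframed moduli of representations and showing its fixed-point contributions have sign $(-1)^{\langle\dd,\dd\rangle_Q}$ via the standard fact that for a symmetric quiver with potential the Behrend function at a torus fixed point equals $(-1)^{\langle\dd,\dd\rangle_Q}$ times a correction that vanishes mod $2$ at a reduced fixed point (this is in the vein of \cite{Sze08}, \cite{BCY}); then I would account for the framing at vertex $0$, which shifts the parity by $d_0=|\pi|_0$ because the framing vector $e_0$ contributes $\Hom(\mathbb{C}^{f_0},V_0)=\mathbb{C}^{d_0}$ to the deformation side only. Assembling these gives precisely the claimed formula, and summing over all $\pi$ with $\chi_{\vir}$ of each component being the signed count of its fixed points completes the proof. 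The main obstacle I anticipate is the sign bookkeeping: one must verify that at each torus-fixed point the fixed locus is genuinely reduced (so that the contribution is exactly $(-1)^{\dim T_p}$ with no multiplicity) and that the parity of $\dim T_p M$ matches $|\pi|_0+\langle|\pi|,|\pi|\rangle_Q$ rather than some other combination; this requires either an explicit analysis of the $T$-character of the tangent space at $p_\pi$ using the box-counting combinatorics of $\pi$ (pairing up weight spaces of opposite sign by the Serre-duality symmetry of the symmetric obstruction theory, leaving an unpaired piece whose dimension has the right parity), or a direct appeal to the computation already carried out in \cite{BCY} for exactly this orbifold setup, which I would cite to avoid re-deriving the full combinatorial identity.
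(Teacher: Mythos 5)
The paper does not prove this proposition; it is imported verbatim from Bryan--Cadman--Young \cite{BCY}. Your outline, however, is exactly the method the paper itself uses for the analogous Theorem~\ref{thm:DT-to-part} (the $(1,\ldots,1)$-framed case): identify the torus-fixed locus with (colored) plane partitions, then apply Behrend--Fantechi's sign rule at isolated fixed points. So the approach is correct and matches the paper's technique for the parallel result.

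One place you are vaguer than you need to be is the sign. You invoke ``careful bookkeeping of the framed deformation complex'' and Serre-duality pairing of weight spaces, ultimately deferring to \cite{BCY}. But the cleanest route (and the one the paper uses in \textsection\ref{pf:part}) is to note that $\Hilb^{\dd}(\orbi{X})$ is the critical locus of $\Tr(W)$ inside the \emph{smooth} ambient variety $\msp{M}^{\dd,e_0}(Q_r(a,b,c))$, and by \cite[Prop.~3.3]{BF08} the Behrend function at any $T_0$-fixed point equals $(-1)^{\dim}$ of that ambient smooth scheme, which is the GIT quotient of $\prod_{a:i\to j}\Hom(\mathbb{C}^{d_i},\mathbb{C}^{d_j})\times\mathbb{C}^{d_0}$ by $\prod_i\GL_{d_i}$, of dimension $\sum_a d_{s(a)}d_{t(a)}+d_0-\sum_i d_i^2 = d_0-\langle\dd,\dd\rangle_Q$. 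This gives $(-1)^{|\pi|_0+\langle|\pi|,|\pi|\rangle}$ at once, with no need to analyze the $T$-character of the tangent space at each $\pi$ or verify that fixed points are reduced (BF08's Thm.~3.4 only needs the fixed locus to be finite). You correctly flagged that the $|\pi|_0$ term tracks the framing $\Hom(\mathbb{C},V_0)\cong\mathbb{C}^{d_0}$ and $\langle\dd,\dd\rangle_Q$ tracks loops minus arrows; making the dimension count explicit would turn your sketch into a complete argument rather than a deferral to \cite{BCY}.
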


\subsection{Multi-colored plane partitions}

Let $Q=(Q_0,Q_1)$ be a quiver with a labeling of arrows $\ell:Q_1\to\{x,y,z\}$. We denote $\mathcal{S}(Q_0)$ be the set of non-empty subsets of $Q_0$.

\begin{definition}
A $Q$-multi-colored plane partition consists of a plane partition $\pi\in\planep$ with a multi-coloring 
\[
K:\pi\to\mathcal{S}(Q_0)
\]
such that for any arrow $a:v\to w$ labeled with $x$, if $w\in K(i,j,k)$ for some $(i,j,k)\in\pi$, then $v\in K(i-1,j,k)$, and similar conditions hold for arrows labeled with $y$ and $z$. Note that there are many different $Q$-multi-colorings on one plane partition $\pi$.
\end{definition}

Given a $Q$-multi-colored plane partition $\overline{\pi}=(\pi,K)$, it associates a dimension vector $\ww(\overline{\pi}):=\big(w_v(\overline{\pi})\big)_{v\in Q_0}$ defined by
\[
\ww_v(\overline{\pi})=\text{number of boxes $(i,j,k)\in\pi$ such that the color $v\in K(i,j,k)$}.
\]
For any dimension vector $\dd\in\mathbb{Z}^{\oplus Q_0}$, we denote by $n_Q(\dd)$ the number of $Q$-multi-colored plane partitions with dimension vector $\dd$. We define the generating function
\[
Z^{Q}_{\PL}(\tt)=\sum_{\dd} n_Q(\dd)\,\tt^{\dd}
\]
of $Q$-multi-colored plane partitions, where we write $\tt$ for $(t_v)_{v\in Q_0}$, $\dd=(d_v)_{v\in Q_0}$ and $\tt^{\dd}=\prod_{\dd\in Q_0} t_v^{d_v}$.

For simplicity, from now on we only consider the quiver $(Q,W)$ \eqref{eq:the-quiver} with potential from the quantum Fermat quintic threefold. We will rearrange the vertices and arrows, and write
\[
\begin{xy} 0;<2pt,0pt>:<0pt,-2pt>:: 
(40,16) *+[Fo]{1} ="0",
(8,40) *+[Fo]{3} ="2",
(32,40) *+[Fo]{2} ="1",
(0,16) *+[Fo]{4} ="3",
(20,0) *+[Fo]{0} ="4",
(20,20) *+<2pt>{},
"4":{\ar@<-.5ex>@/^/"0"},
"4":{\ar@<.5ex>@/^/"0"},
"4":{\ar"2"},
"0":{\ar@<-.5ex>@/^/"1"},
"0":{\ar@<.5ex>@/^/"1"},
"0":{\ar"3"},
"1":{\ar@<-.5ex>@/^/"2"},
"1":{\ar@<.5ex>@/^/"2"},
"1":{\ar"4"},
"2":{\ar@<-.5ex>@/^/"3"},
"2":{\ar@<.5ex>@/^/"3"},
"2":{\ar"0"},
"3":{\ar@<-.5ex>@/^/"4"},
"3":{\ar@<.5ex>@/^/"4"},
"3":{\ar"1"},
\end{xy}
\]
with outer arrows $x_i,y_i:i\to i+1$ and inner arrows $z_i:i+3$. The induced potential is
\[
W=\bm{y}\bm{x}\bm{z}-q\bm{y}\bm{z}\bm{x},
\]
where $\bm{x}=\sum x_i$, $\bm{y}=\sum y_i$, and $\bm{z}=z_0+qz_1+q^2z_2+q^3z_3+q^4z_4$.

\vspace{1mm}
Our main theorem of the section is to associate DT invariants of $(Q,W)$ with $Q$-multi-colored plane partitions.

\begin{theorem}\label{thm:DT-to-part}
The generating function $Z^{Q,W}$ is, up to signs, given by the generating function $Z^{Q}_{\PL}$ of $Q$-multi-colored plane partitions. More specifically, we have
\[
Z^{Q,W}(\tt)=\sum_{\dd\in N_Q^+}(-1)^{|\dd|+\langle |\dd|,|\dd|\rangle}\, n_Q(\dd)\,\tt^{\dd},
\]
where $|\dd|=\sum_i d_i$ and $\langle -,-\rangle$ is the bilinear form associated to the quiver $Q$. 
\end{theorem}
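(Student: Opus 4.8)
The plan is to mimic the proof of the Bryan--Cazanave--Young result (the proposition quoted just above) for the orbifold $[\mathbb{C}^3/\mu_5]$, which identifies $Z^{Q_r(a,b,c),W,e_0}$ with a signed count of $\mu_r(a,b,c)$-colored plane partitions, but to replace the framing vector $e_0=(1,0,\ldots,0)$ with the framing vector $(1,1,1,1,1)$. The key point is that the Jacobi algebra $\Jac(Q,W)$ is, by the remark in Section~4, Morita equivalent to (a quantization of) $\mathbb{C}[u,v,w]\rtimes\mu_5$, and a $(1,\ldots,1)$-framed cyclic $\Jac(Q,W)$-module is the same data as a $\mu_5$-equivariant quotient of $\mathbb{C}[u,v,w]$ together with a choice of generator in each isotypic component at once, i.e.\ a quotient of the regular representation rather than of a single character. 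The first step is therefore to set up the $\mathbb{C}^*\times\mathbb{C}^*$-torus action on $\Hilb^{\dd}(Q,W)$ coming from rescaling $u,v,w$ (equivalently, rescaling the arrows $\bm x,\bm y$ but not the central-type arrow $\bm z$, compatibly with the potential), and to check that the fixed locus is isolated and reduced. I expect the potential-preserving torus to act on the moduli space with finitely many fixed points, each of which is a monomial ideal-type configuration.

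The second step is the combinatorial identification of the fixed points. For the $e_0$ framing one gets honest $\mu_5$-colored plane partitions: a single pile of boxes with box $(i,j,k)$ carrying the color $ai+bj+ck\in\mathbb{Z}/5$. With the $(1,\ldots,1)$ framing the cyclic generator lives in the full regular representation, so a torus-fixed module is generated by boxes attached at \emph{each} of the five ``origins'' simultaneously; tracking which colors are reachable from which starting vertex under the monomial $u,v,w$-moves produces exactly a plane partition $\pi$ together with, for each box, the \emph{subset} $K(i,j,k)\subseteq Q_0$ of colors that appear there — this is precisely the definition of a $Q$-multi-colored plane partition, and the combinatorial constraint that $v\in K(i-1,j,k)$ whenever $w\in K(i,j,k)$ for an arrow $v\to w$ labeled $x$ is exactly the statement that the module is closed under multiplication by $u$. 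Hence torus-fixed points of $\Hilb^{\dd}(Q,W)$ are in bijection with $Q$-multi-colored plane partitions of dimension vector $\dd$, giving $\chi\big(\Hilb^{\dd}(Q,W)\big)=n_Q(\dd)$ after passing to Euler characteristics.

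The third step is to pin down the sign. Since $M^{\ff,\dd}(Q,W)$ is the critical locus of a regular function on a smooth scheme, Behrend's theorem gives $\chi_{\vir}=\sum_c c\,\chi(\nu^{-1}(c))$, and because the torus acts preserving the symmetric obstruction theory the Behrend function is constant along orbits, so $\chi_{\vir}\big(\Hilb^{\dd}(Q,W)\big)$ equals the sum over fixed points of the local Behrend value. For a critical locus the Behrend function at an isolated fixed point is $(-1)^{\dim T}$ where $T$ is the ambient smooth space near that point; a dimension count of the relevant $\Hom$/$\Ext$ spaces for a $(1,\ldots,1)$-framed representation of $Q$ gives $\dim T\equiv |\dd|+\langle \dd,\dd\rangle_Q \pmod 2$, exactly as in \cite{BCY}, \cite{You10} — note the total dimension $|\dd|$ enters through the framing $\mathbb{C}^{\oplus 5}=\bigoplus V_i^{\vee}$ contributing $\sum d_i$. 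Assembling: $\chi_{\vir}\big(\Hilb^{\dd}(Q,W)\big)=(-1)^{|\dd|+\langle\dd,\dd\rangle_Q}n_Q(\dd)$, which is the claimed formula (writing $\langle|\dd|,|\dd|\rangle$ for the value of the form, and using $|\dd|$ in the sign exponent). The main obstacle is Step~2: carefully proving that \emph{every} torus-fixed point is of this monomial ``multi-colored'' shape — i.e.\ that the relations $\partial W$ force the fixed module to be spanned by torus-eigenvectors indexed by boxes with a well-defined color set, and that distinct multi-colorings give distinct fixed points — since unlike the classical $\Hilb(\mathbb{C}^3)$ case the module is not an ideal but a quotient of a rank-$5$ free module, so the bookkeeping of which generators survive at each box is more delicate; the $q$-twisting in $W$ must be checked not to obstruct the torus-fixed classification, which it does not because rescaling $u,v,w$ commutes with the $\mu_5$-grading.
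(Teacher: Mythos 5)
Your proposal follows exactly the same strategy as the paper: a two-dimensional torus $T_0\subset(\mathbb{C}^*)^3$ (defined by $\lambda_1\lambda_2\lambda_3=1$) acting on $\Hilb^{\dd}(Q,W)$, identification of the $T_0$-fixed points with $Q$-multi-colored plane partitions, and the sign computed as $(-1)^m$ with $m=\dim\Hilb^{\dd}(\mathbb{C}Q)=|\dd|-\langle\dd,\dd\rangle_Q$ via Behrend--Fantechi's Proposition~3.3. Your sign computation and the final assembly are correct.

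The one place you flag as ``the main obstacle'' is where your argument is genuinely incomplete, and it is indeed the nontrivial point. You need to show that every $T_0$-fixed point corresponds to a \emph{monomial} left ideal $I\subset\Jac(Q,W)$. In the commutative case of \cite[Lemma 4.1]{BF08} this is done with a Nullstellensatz-type argument, but that argument does not transport directly to the noncommutative Jacobi algebra, and your proposal does not supply a replacement --- you only assert the $q$-twist ``does not obstruct'' the classification. The paper's fix is concrete: $\Jac(Q,W)$ contains the $T_0$-invariant central commutative subring $\mathbb{C}[u^5,v^5,w^5]$, so $I_0=I\cap\mathbb{C}[u^5,v^5,w^5]$ is an ordinary monomial ideal; in particular $u^{5n},v^{5n},w^{5n}\in I$ for some $n$. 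A $T_0$-eigenvector generator of $I$ has the form $m(u,v,w)g(uvw)e_i$ with $g(0)\neq 0$, and an iterative ``kill higher-order terms'' argument using $u^{5n},v^{5n},w^{5n}\in I$ produces $m(u,v,w)e_i\in I$, proving $I$ is monomial. Without this (or some equivalent) argument, the identification of fixed points with multi-colored plane partitions --- and hence the equality $\chi(\Hilb^{\dd}(Q,W))=n_Q(\dd)$ --- is not established.

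Two smaller inaccuracies: (i) your description of the torus as ``rescaling $\bm x,\bm y$ but not the central-type arrow $\bm z$'' is off --- all three of $u,v,w$ are rescaled, subject only to the Calabi--Yau constraint $\lambda_1\lambda_2\lambda_3=1$; (ii) the statement that $\dim T$ picks up a contribution $\sum d_i$ ``through the framing $\mathbb{C}^{\oplus 5}=\bigoplus V_i^{\vee}$'' should be phrased more carefully as the contribution of the framing maps $\mathbb{C}\to V_i$, one per vertex, which is the $\sum_i d_i$ term in $\dim\Hilb^{\dd}(\mathbb{C}Q)$.
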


We will leave the proof to \ref{pf:part}.

\vspace{1mm}

Observe that this quiver is the same as the McKay quiver of $\mu_5(1,1,3)$. Recall that the vertices of the McKay quiver correspond to the irreducible representations of $\mu_5$, in which there is a distinguished one, the trivial representation. However, the vertices of $Q$ are simple $\sh{A}$-modules. There are exactly $5$ ways to identify the quiver $Q$ with the McKay quiver of $\mu_5(1,1,3)$, depending on a choice of a vertex in $Q_0$. In other words, for each $v\in Q_0$, there is a unique bijection $\alpha_v:Q_0\to\mathbb{Z}/5\mathbb{Z}$ with $\alpha_v(v)=0$ identifying the quiver $Q$ with the McKay quiver of $\mu_5(1,1,3)$.

\begin{definition}
Let $v\in Q_0$ be a vertex. A $(Q,v)$-colored plane partition is a plane partition $\pi$ with the coloring $K_v:\pi\to Q_0$ making $\alpha_v\circ K_v:\pi\to\mathbb{Z}/5\mathbb{Z}$ the $\mu_5(1,1,3)$-coloring of $\pi$.
\end{definition}

Given five plane partitions $\pi_v$ indexed by $Q_0$. We may define a $Q$-multi-colored plane partition by taking $\pi$ to be the union of $\pi_v$'s with multi-coloring
\[
K(i,j,k)=\left\{K_v(i,j,k): (i,j,k)\in\pi_v\right\}\subset Q_0.
\]

\begin{lemma}
Any $Q$-multi-colored plane partition is uniquely determined by the $(Q,v)$-colored plane partitions $\pi_v$ for $v\in Q_0$.
\end{lemma}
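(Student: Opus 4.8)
The plan is to extract, from a given $Q$-multi-colored plane partition $\overline{\pi}=(\pi,K)$, for each vertex $v\in Q_0$ the subset
\[
\pi_v:=\{(i,j,k)\in\pi : v\in K(i,j,k)\},
\]
and to show that $\pi_v$, equipped with the coloring $K_v:=K_v|_{\pi_v}$ where $K_v(i,j,k)$ is the unique element of $Q_0$ forced by the multi-coloring rules starting from $v$, is a $(Q,v)$-colored plane partition, and conversely that $\overline{\pi}$ is recovered from the tuple $(\pi_v)_{v\in Q_0}$ by the union-of-colorings recipe described just above the statement. First I would check that each $\pi_v$ is itself a plane partition: this is the combinatorial heart of the argument. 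Suppose $(i,j,k)\in\pi_v$, i.e.\ $v\in K(i,j,k)$. We must show $(i-1,j,k)\in\pi_v$ (when $i\ge 1$), and similarly for $j,k$. Since the McKay quiver $Q$ of $\mu_5(1,1,3)$ has, at every vertex $w$, outgoing arrows labeled $x$, $y$ (to $w+1$) and $z$ (to $w+3$), every vertex has an incoming $x$-arrow from a unique vertex $w'$ (namely $w'=w-1$ in the $\alpha_v$-labeling, but intrinsically: the source of the unique $x$-labeled arrow into $v$). The multi-coloring axiom applied to that arrow $a:w'\to v$ labeled $x$ says: since $v\in K(i,j,k)$, we have $w'\in K(i-1,j,k)$; hence $(i-1,j,k)\in\pi$, and moreover $w'$ is exactly the vertex that $K_v$ assigns to $(i-1,j,k)$, so $(i-1,j,k)\in\pi_v$. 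The same argument with the $y$- and $z$-labeled arrows into $v$ handles the other two directions. This simultaneously proves $\pi_v$ is a plane partition and that the induced coloring on $\pi_v$ is the $\mu_5(1,1,3)$-coloring after applying $\alpha_v$ — i.e.\ that $\pi_v$ is a $(Q,v)$-colored plane partition — because the coloring rule $K_v(i,j,k)=\alpha_v^{-1}\big(\alpha_v(v)+(\text{offset})\big)$ is precisely propagated by these three incoming arrows (equivalently, $K_v(i,j,k)$ is determined by $K_v(0,0,0)=v$ via $K_v(i,j,k)=v + i + j + 3k$ in the $\alpha_v$-identification, which is the $\mu_5(1,1,3)$-coloring).

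Next I would verify the reconstruction direction. Given the tuple $(\pi_v)_{v\in Q_0}$, form $\pi:=\bigcup_v \pi_v$ and $K(i,j,k):=\{v : (i,j,k)\in\pi_v\}$. One checks $\pi$ is a plane partition (a union of plane partitions that are "downward closed" in each coordinate is again downward closed, provided $K(i,j,k)\neq\varnothing$ for each box of $\pi$, which is automatic) and that $(\pi,K)$ satisfies the multi-coloring axioms: if $a:v\to w$ is an $x$-labeled arrow and $w\in K(i,j,k)$, then $(i,j,k)\in\pi_w$, and since $\pi_w$ carries the $\mu_5(1,1,3)$-coloring with $K_w(i,j,k)=w$, the box $(i-1,j,k)\in\pi_w$ has $K_w(i-1,j,k)=$ the source of the $x$-arrow into $w$, which is exactly $v$; hence $v\in K(i-1,j,k)$. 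Finally I would note the two constructions $\overline{\pi}\mapsto(\pi_v)_v$ and $(\pi_v)_v\mapsto\overline{\pi}$ are mutually inverse: starting from $\overline{\pi}$, the box $(i,j,k)$ lies in the reconstructed $\pi$ iff $K(i,j,k)\neq\varnothing$ iff $(i,j,k)\in\pi$, and $v$ lies in the reconstructed color iff $(i,j,k)\in\pi_v$ iff $v\in K(i,j,k)$; the other composite is checked the same way on each $\pi_v$. This establishes the claimed bijection, hence the uniqueness statement.

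The main obstacle I anticipate is purely bookkeeping: making precise the phrase "$K_v(i,j,k)$ is the vertex forced by the multi-coloring starting from $v$", and confirming that in the McKay quiver $Q$ of $\mu_5(1,1,3)$ every vertex has exactly one incoming arrow of each label $x,y,z$, so that the propagation is well-defined and single-valued. This is immediate from the explicit arrow structure $x_i,y_i:i\to i+1$, $z_i:i\to i+3$ recorded just before the statement (so the unique $x$-arrow into vertex $i$ comes from $i-1$, the unique $y$-arrow from $i-1$, and the unique $z$-arrow from $i-3$; in particular the $x$- and $y$-arrows into a vertex share the same source, which is consistent since both $x$ and $y$ shift the coloring by $+1$). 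Once this structural fact is in hand, the rest is a routine induction on $|(i,j,k)|=i+j+k$ to show $K_v$ is the unique coloring with $K_v(0,0,0)=v$ compatible with all three directions, and everything assembles as above.
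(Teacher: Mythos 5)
Your overall plan — decompose a $Q$-multi-colored plane partition into $(Q,v)$-colored pieces and reconstruct it, using the McKay structure of $Q$ to propagate colorings — matches the paper's approach, and the structural observations about the quiver (unique incoming $x$-, $y$-, $z$-arrow at each vertex) are correct. However, the definition of $\pi_v$ is wrong, and this defeats the argument.

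You set $\pi_v := \{(i,j,k) \in \pi : v \in K(i,j,k)\}$, collecting the boxes whose multi-color \emph{contains the index} $v$. This set need not be a plane partition. Concretely, take $\pi = \{(0,0,0),(1,0,0)\}$ with $K(0,0,0)=\{0\}$, $K(1,0,0)=\{1\}$; this is a valid $Q$-multi-coloring (the $x$-arrow into vertex $1$ comes from $0$, and indeed $0\in K(0,0,0)$), yet under your definition $\pi_1 = \{(1,0,0)\}$ is not a plane partition. The paper's definition is $\pi_v := \{(i,j,k) \in \pi : K_v(i,j,k) \in K(i,j,k)\}$, i.e., the boxes where the $(Q,v)$-coloring $K_v(i,j,k)=\alpha_v^{-1}(i+j+3k)$ (with $K_v(0,0,0)=v$) agrees with one of the multi-colors; in the example this gives $\pi_0 = \pi$ and $\pi_v=\varnothing$ otherwise. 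Your verification step in fact betrays the confusion: you deduce $w'\in K(i-1,j,k)$ from the $x$-arrow $w'\to v$, then assert that $w'=K_v(i-1,j,k)$ and conclude $(i-1,j,k)\in\pi_v$ — but under your definition the required condition is $v\in K(i-1,j,k)$, not $w'\in K(i-1,j,k)$, and the identity $w'=K_v(i-1,j,k)$ holds only when $K_v(i,j,k)=v$, i.e., only when $i+j+3k\equiv 0\pmod 5$. The same slip recurs in the reconstruction, where you set $K(i,j,k)=\{v:(i,j,k)\in\pi_v\}$ and assert $K_w(i,j,k)=w$; the paper correctly sets $K(i,j,k)=\{K_v(i,j,k):(i,j,k)\in\pi_v\}$. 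If you replace $v$ by $K_v(i,j,k)$ throughout — both in the definition of $\pi_v$ and in the reconstruction formula — the induction you outline becomes correct and coincides with the paper's proof.
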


\begin{proof}
This follows from the fact that $Q$ can be identified with a McKay quiver of a group action on $\mathbb{C}^3$. To be more specific, the quiver $Q$ satisfies the following properties:
\begin{enumerate}
    \item[(a)] For each vertex $i$, there are exactly $3$ arrows starting at $i$ which are labeled with $x,y,z$; and there are exactly $3$ arrows ending at $i$, also labeled with $x,y,z$.
    \item[(b)] For each vertex $i$, the target of any non-trivial composition of arrows starting at $i$ depends only on the numbers of arrows labeled with $x,y,z$.
\end{enumerate}
Thus given any plane partition $\pi$ with a $Q$-multi-coloring $K$, we can define $\pi_v$ to be 
\[
\pi_v=\big\{(i,j,k)\in\pi :K_v(i,j,k)\in K(i,j,k)\big\}.
\]
It is easy to check that $\pi_v$ is a plane partition and the union of $\pi_v$'s is $\pi$.
\end{proof}

\begin{corollary}\label{cor:multi-colored}
We have
\[
Z^Q_{\PL}(t_0,t_1,t_2,t_3,t_4)=\prod_{i\in\mathbb{Z}/5\mathbb{Z}}\,Z^{\mu_5(1,1,3)}_{\PL}(t_i,t_{i+1},t_{i+2},t_{i+3},t_{i+4}).
\]
\end{corollary}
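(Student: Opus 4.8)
The plan is to upgrade the bijection of the preceding lemma to an identity of generating functions by keeping track of dimension vectors. Combining that lemma with the construction preceding it, the assignment $\overline\pi=(\pi,K)\mapsto(\pi_v)_{v\in Q_0}$, where $\pi_v=\{(i,j,k)\in\pi:K_v(i,j,k)\in K(i,j,k)\}$, is a bijection between $Q$-multi-colored plane partitions and tuples in $\planep^{Q_0}$ (each $\pi_v$ being equipped with its unique $(Q,v)$-coloring $K_v$); the inverse sends $(\pi_v)_v$ to $\pi=\bigcup_v\pi_v$ with $K(i,j,k)=\{K_v(i,j,k):(i,j,k)\in\pi_v\}$.

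First I would pin down the identifications $\alpha_v$ explicitly. Since $\alpha_v\colon Q_0\to\mathbb{Z}/5\mathbb{Z}$ must carry the outer arrows $i\to i+1$ and inner arrows $i\to i+3$ to the corresponding arrows of the McKay quiver of $\mu_5(1,1,3)$, it is an affine map of $\mathbb{Z}/5\mathbb{Z}$, and $\alpha_v(v)=0$ forces $\alpha_v(w)=w-v$ once $Q_0$ is identified with $\mathbb{Z}/5\mathbb{Z}$. Hence $K_v(i,j,k)=\alpha_v^{-1}(i+j+3k)=(i+j+3k)+v$, so for a fixed box the map $v\mapsto K_v(i,j,k)$ is a bijection of $Q_0$.

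Next I would establish the additivity of dimension vectors under the bijection:
\[
\ww_w(\overline\pi)=\sum_{v\in Q_0}\#\{(i,j,k)\in\pi_v:K_v(i,j,k)=w\}\qquad\text{for each }w\in Q_0.
\]
A box $(i,j,k)\in\pi$ contributes to $\ww_w(\overline\pi)$ exactly when $w\in K(i,j,k)$, i.e.\ when $(i,j,k)\in\pi_v$ for the unique $v$ with $K_v(i,j,k)=w$; and that same $v$ is the only index for which the box can be counted on the right, where it is counted precisely when $(i,j,k)\in\pi_v$. Summing $\tt^{\ww(\overline\pi)}$ over all $\overline\pi$, using the bijection and this additivity, the sum factors as
\[
Z^Q_{\PL}(\tt)=\sum_{(\pi_v)_v\in\planep^{Q_0}}\prod_{w\in Q_0}t_w^{\,\sum_v\#\{b\in\pi_v:K_v(b)=w\}}=\prod_{v\in Q_0}\left(\sum_{\pi\in\planep}\prod_{w\in Q_0}t_w^{\,\#\{b\in\pi:K_v(b)=w\}}\right).
\]

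Finally I would identify the $v$-th factor. The number of boxes of $\pi$ with $K_v$-color $w$ equals the number with $\mu_5(1,1,3)$-color $\alpha_v(w)=w-v$, so substituting $c=w-v$ shows the $v$-th factor equals $Z^{\mu_5(1,1,3)}_{\PL}$ with the variable attached to color $c$ replaced by $t_{c+v}$; that is, it equals $Z^{\mu_5(1,1,3)}_{\PL}(t_v,t_{v+1},t_{v+2},t_{v+3},t_{v+4})$, which yields the corollary. The only point needing care is the explicit form of $\alpha_v$, and hence the resulting cyclic shift of the variables; given that, the rest is bookkeeping on top of the lemma's bijection.
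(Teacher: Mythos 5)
Your proof is correct and takes essentially the same approach as the paper: it rests on the same bijection $\overline\pi\mapsto(\pi_v)_{v\in Q_0}$ from the preceding lemma, and the rest is the bookkeeping of dimension vectors (which the paper leaves implicit in stating the corollary). Your explicit computation that $\alpha_v(w)=w-v$ and hence $K_v(i,j,k)=(i+j+3k)+v$, making $v\mapsto K_v(i,j,k)$ a bijection of $Q_0$ for each fixed box, is exactly the point that makes the additivity of $\ww(\overline\pi)$ and the factorization of the generating function go through.
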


This reduces the computation of numbers of $Q$-multi-colored plane partitions to the ones for $\mu_5(1,1,3)$-colored plane partitions.

\begin{remark}
Unfortunately, the signs in the DT invariants $Z^{Q,W}$ and $Z^{[\mathbb{C}^3/\mu_5]}$ do not agree. That is,
\[
Z^{Q,W}(t_0,t_1,t_2,t_3,t_4)\neq\prod_{i\in\mathbb{Z}/5\mathbb{Z}}\,Z^{[\mathbb{C}^3/\mu_5]}(t_i,t_{i+1},t_{i+2},t_{i+3},t_{i+4}),
\]
and there is no obvious modification (e.g.\,changes of variables) to equalize them.
\end{remark}

\begin{remark}
Both $Z^{Q,W}$ and $Z^{[\mathbb{C}^3/\mu_5]}$ are DT invariants of the quiver $(Q,W)$ with potential, with different framing vectors. To put it another way, they are DT invariants of the same Calabi--Yau-3 category with different stability conditions. Thus there should be a formula (``wall-crossing'') connecting two series $Z^{Q,W}$ and $Z^{[\mathbb{C}^3/\mu_5]}$. This can be achieved by, for example, Joyce--Song's generalized DT invariants \cite{JS12}. However, it does not reduce to a simple formula (at least not obvious to us) due to the fact that the Euler pairing ${\chi}_Q(-,-)$ of the quiver $Q$ is not trivial. Hence the formula in \cite[Corollary 7.24]{JS12} does not hold.
\end{remark}

Here we write down the series $Z^{Q,W}$ up to degree $5$. W denote $\tt^{(a_0,\ldots,a_4)}=\sum_i t_i^{a_0}\cdots t_{i+4}^{a_4}$:
\[
\begin{split}
    Z^{Q,W}(\tt)=1 &+\tt^{(1,0,0,0,0)}+3\tt^{(1,1,0,0,0)}-2\tt^{(1,0,1,0,0)}\\
    & +3\tt^{(1,2,0,0,0)}+\tt^{(2,0,1,0,0)}-8\tt^{(1,1,1,0,0)}+8\tt^{(1,1,0,1,0)}\\
    & +\tt^{(1,3,0,0,0)}+3\tt^{(2,1,1,0,0)}-12\tt^{(1,2,1,0,0)}+7\tt^{(1,1,2,0,0)}\\
    & -12\tt^{(1,2,0,1,0)}+5\tt^{(1,1,0,2,0)}-34\tt^{(1,1,1,1,0)}\\
    & -3\tt^{(2,2,1,0,0)}-4\tt^{(2,1,2,0,0)}-6\tt^{(1,3,1,0,0)}+18\tt^{(1,2,2,0,0)}-2\tt^{(1,1,3,0,0)}\\
    & +8\tt^{(1,3,0,1,0)}+10\tt^{(1,2,0,2,0)}+20\tt^{(2,1,1,1,0)}+56\tt^{(1,2,1,1,0)}+35\tt^{(1,1,2,1,0)}\\
    &-54\tt^{(1,1,1,2,0)}-171t_0t_1t_2t_3t_4+O((t_0,t_1,t_2,t_3,t_4)^6)
\end{split}
\]
Take $t=t_0=t_1=t_2=t_3=t_4$, we obtain
\[
Z^{Q,W}(t)=1+5t+5t^2+20t^3-210t^4-131t^5+O(t^6).
\]
We conclude that
\[
\begin{split}
    Z^{\sh{A}}(t) & = \Big(Z^{Q,W}(t)\Big)^{10}\Big( M(-t^5)\Big)^{-50} \\
    & = 1 + 50t + 1175t^2 + 17450t^3 + 184275t^4 + 1450740t^5 + O(t^6).
\end{split}
\]

\subsection{Proof of Theorem~\ref{thm:DT-to-part}}\label{pf:part}

We mainly follow the same method in the computation of DT invariants on $\mathbb{C}^3$ in \cite{BF08}. The path algebra $\mathbb{C}Q$ is
\[
\mathbb{C}Q=\mathbb{C}\langle e,u,v,w\rangle\big/(e^5-1, eu-que, ev-qve, ew-q^3we).
\]
There is a standard $T=(\mathbb{C}^*)^3$-action on $\mathbb{C}Q$ given by
\[
(\lambda_1,\lambda_2,\lambda_3)\cdot(u,v,w)=(\lambda_1 u,\lambda_2 v,\lambda_3 w).
\]
Let $T_0\subset T$ be the sub-torus defined by $\lambda_1\lambda_2\lambda_3=1$, then $T_0$ fixes the potential $W$. Thus it gives a $T_0$-action on $\Jac(Q,W)$.

The Hilbert schemes $\Hilb^{\bullet}(Q,W)$ parameterize quotients of $\Jac(Q,W)$, which are equivalent to left ideals of $\Jac(Q,W)$. Therefore the $T_0$-action on $\Jac(Q,W)$ induces a $T_0$-action on $\Hilb^{\bullet}(Q,W)$. The following is a generalization of \cite[Lemma 4.1]{BF08} 

\begin{lemma}
For each dimension vector $\dd$, there is a one-to-one correspondence between $T_0$-fixed points of $\Hilb^{\dd}(Q,W)$ and $Q$-multi-colored plane partitions with dimension vector $\dd$.
\end{lemma}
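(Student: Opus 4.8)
The plan is to mimic the classical analysis of torus-fixed ideals in $\mathbb{C}[x,y,z]$ from \cite{BF08}, adapted to the non-commutative setting of $\Jac(Q,W)$, keeping careful track of the idempotent grading by vertices. First I would recall that a $\mathbb{C}$-point of $\Hilb^{\dd}(Q,W)$ is a left ideal $I\subset\Jac(Q,W)$ with $\dim_{\mathbb{C}}(\Jac(Q,W)/I)_i=d_i$ for each vertex $i$; equivalently a surjection from the regular (cyclic) module. The $T_0$-action scales the arrows $u,v,w$ with weights summing to zero and fixes $W$, hence acts on $\Jac(Q,W)$ preserving the vertex-idempotents $e_i$ (which are $T_0$-invariant) and on $\Hilb^{\dd}(Q,W)$. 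A fixed point is a $T_0$-invariant left ideal $I$, and $T_0$-invariance forces $I$ to be spanned by $T_0$-weight vectors; since each weight space of $\Jac(Q,W)$ is spanned by monomials $e_i u^a v^b w^c$ (using the relations $e^5=1$, $ew=q^3we$, etc., together with the $q$-commutation from $W$, every path reduces to such a monomial form up to scalar), the ideal $I$ is a monomial ideal. The quotient $\Jac(Q,W)/I$ then has a basis indexed by a finite set $\pi$ of monomials $e_i u^a v^b w^c$ not in $I$.

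Next I would extract the combinatorial structure. The downward-closedness of the complement of a monomial left ideal — if $e_i u^a v^b w^c\notin I$ then so are the monomials obtained by deleting a letter — translates, after projecting $(e_i u^a v^b w^c)\mapsto (a,b,c)$, into the defining condition of a plane partition $\pi\in\planep$ on the set of exponent triples that occur. The subtlety is the idempotent/vertex data: a monomial $u^a v^b w^c$ starting at vertex $v$ lands at the vertex $\alpha(v,a,b,c)$ determined by the arrow-count, matching the $\mu_5(1,1,3)$ rule, so the set of vertices $v$ for which $e_{?}u^a v^b w^c$ (appropriately, the path ending where it must) survives in the quotient defines precisely a subset $K(i,j,k)\subset Q_0$. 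The compatibility conditions for a $Q$-multi-coloring — that an arrow labeled $x$ into $w$ with $w\in K(i,j,k)$ forces $v\in K(i-1,j,k)$ — are exactly the statement that $I$ is a \emph{left} ideal: if a path is in the quotient, pre-composing by the relevant arrow (moving one step back in an exponent) yields a path still in the quotient. Conversely, given a $Q$-multi-colored plane partition one reconstructs the monomial ideal $I$ as the span of all monomials not listed, and checks it is a left ideal using properties (a) and (b) of the quiver stated in the preceding lemma; the dimension vector of the quotient is by construction $\ww(\overline{\pi})=\dd$.

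The bookkeeping to verify is that these two maps are mutually inverse and that the resulting dimension vector is correctly $\dd$: one must confirm that the weight decomposition of $\Jac(Q,W)$ is multiplicity-free enough that $T_0$-invariant subspaces are monomial (this is where the relations from $\partial W$ must be used to guarantee no "accidental" two-dimensional weight spaces that could harbor non-monomial invariant lines), and that the left-ideal condition is equivalent to the stated coloring axioms with no extra constraints coming from the potential relations themselves. I expect the main obstacle to be precisely this point: showing that the Jacobi relations $\partial_u W,\partial_v W,\partial_w W$ do not impose conditions \emph{beyond} the $q$-commutation of $u,v,w$ on monomials, i.e.\ that $\Jac(Q,W)$ is (vertex by vertex) a free module on the monomials $u^a v^b w^c$ with the plane-partition-style combinatorics intact — equivalently, that the $q$-deformed three-dimensional affine space underlying $\Jac(Q,W)$ behaves exactly like $\mathbb{C}[x,y,z]$ for the purposes of monomial-ideal combinatorics. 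Once that is in place, the correspondence is a direct translation, exactly parallel to \cite[Lemma 4.1]{BF08}, and I would present it by writing down the two assignments explicitly and checking functoriality of the complement operation.
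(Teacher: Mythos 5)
Your overall plan — reduce to monomial left ideals, then read off the multi-coloring from the vertex idempotents and exponent triples — is the right shape, and your account of the second half (monomial ideal $\leftrightarrow$ multi-colored plane partition via the McKay-type combinatorics) is essentially what the paper does. But the crucial first step, ``$T_0$-invariant ideal $\Rightarrow$ monomial ideal,'' has a genuine gap, and you have placed the obstruction in the wrong spot.

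The issue is \emph{not} whether $\partial W$ imposes relations beyond $q$-commutation: the paper has already shown $\Jac(Q,W)\cong \mathbb{C}\langle e,u,v,w\rangle_{(\overline{q}_{ij})}/(e^5-1)$, so the monomials $e_i u^a v^b w^c$ form a basis and there is nothing more to check there. The real problem is that $T_0\subset(\mathbb{C}^*)^3$ is the codimension-one subtorus $\lambda_1\lambda_2\lambda_3=1$, so its weight lattice is rank two while the monomials are indexed by $\mathbb{Z}_{\geq 0}^3$; consequently the $T_0$-weight spaces are \emph{not} one-dimensional. For instance $e_iu^av^bw^c$ and $e_iu^{a+1}v^{b+1}w^{c+1}$ lie in the same weight space, so a $T_0$-eigenvector in $\Jac(Q,W)$ is of the form $m(u,v,w)\,g(uvw)\,e_i$ with $g$ a (possibly nonconstant) polynomial, $g(0)\neq 0$. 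Thus ``$I$ is $T_0$-invariant'' does \emph{not} immediately give ``$I$ is monomial.'' In the commutative case \cite{BF08} handle this with Hilbert's Nullstellensatz: $g(xyz)$ is a unit in the finite-dimensional quotient $\mathbb{C}[x,y,z]/I$, so $m\in I$. That argument does not port directly to the non-commutative $\Jac(Q,W)$, and this is exactly what the paper flags. The fix in the paper is to pass to the central commutative subring $\mathbb{C}[u^5,v^5,w^5]\subset\Jac(Q,W)$, apply the commutative argument there to conclude $u^{5n},v^{5n},w^{5n}\in I$ for some $n$, and then run an explicit iterative subtraction (repeatedly clearing the lowest nonconstant term of $g$) to show $m(u,v,w)g(uvw)e_i\in I$ implies $m(u,v,w)e_i\in I$. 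Without some such argument your proposal does not close, and ``checking multiplicity-freeness via $\partial W$'' is a dead end because the weight spaces genuinely are infinite-dimensional regardless of the relations.
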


\begin{proof}
Since $e_i$'s are idempotent, any ideal in $\Jac(Q,W)$ is generated by polynomials of the form $e_i f(x,y,z)$ for some $i$ and $f(u,v,w)$. We want to show that any $T_0$-invariant ideal can be generated by monomials. We remark that the proof of \cite[Lemma 4.1]{BF08} uses Hilbert's Nullstellensatz, hence it does not apply directly here. However, $\Jac(Q,W)$ contains a $T_0$-invariant subring $\mathbb{C}[u^5,v^5,w^5]$. We take $I_0=I\cap\mathbb{C}[u^5,v^5,w^5]$ is a $T_0$-invariant ideal in $\mathbb{C}[u^5,v^5,w^5]$. Therefore $I_0$ is a monomial ideal, and particularly, there exists $n$ such that $u^{5n},v^{5n},w^{5n}\in I_0\subset I$.

Now, $I$ is generated by eigenvectors of $T_0$, which are polynomials of the form $m(u,v,w)g(uvw)e_i$ for some monomial $m$, $g\in\mathbb{C}[t]$ with $g(0)\neq 0$. Suppose $m(u,v,w)g(uvw)e_i\in I$. We write
\[
g(uvw)=a_0+a_r(uvw)^r+\ldots
\]
Then $a_r(uvw)^rg(uvw)$
\[
\begin{split}
    &\left(g(uvw) - \frac{a_r}{a_0}(uvw)^r g(uvw)\right)m(u,v,w)e_i \\
    =&\,\big(a_0+\tilde{a}_{2r}(uvw)^{2r}+\ldots\big)m(u,v,w)e_i\in I
\end{split}
\]
Repeating this process, we get $(a_0+c(uvw)^{N}+\ldots)m(u,v,w)e_i\in I$ for some $N\geq 5n$, then $m(u,v,w)e_i\in I$. This shows that $I$ is a monomial ideal.

For a monomial ideal $I$ in $\Jac(Q,W)$, we associate a $Q$-multi-colored plane partition $\pi$ as follows: we define
\[
\pi=\big\{(i,j,k): e_{\ell}u^i v^j w^k\notin I\text{ for some }\ell\big\}
\]
and a $Q$-multi-coloring
\[
K(i,j,k)=\big\{\ell: e_{\ell}u^iv^jw^k\notin I\big\}.
\]
We left the details to the reader to check that this indeed defines a $Q$-multi-colored plane partitions. Also, we can associate any $Q$-multi-colored plane partition to a monomial ideal in the obvious way. To compare the dimension vectors, for any (monomial) ideal $I$, there is a natural decomposition
\[
I=e_0I\oplus e_1I\oplus e_2I\oplus e_3I\oplus e_4I
\]
as vector spaces, and the dimension vector $\dd$ of the module induced by $I$ is given by
\[
d_i=\dim_{\mathbb{C}} \big(e_i\Jac(Q,W)\big)/\big(e_iI\big),
\]
which agrees with the dimension vector of the associated multi-colored plane partition.
\end{proof}

\begin{corollary}
For any dimension vector $\dd$, we have
\[
\chi\big(\Hilb^{\dd}(Q,W)\big)= n_Q(\dd).
\]
\end{corollary}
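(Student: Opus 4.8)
The plan is to deduce the corollary from the preceding lemma by torus localization for topological Euler characteristics. Recall first that $\Hilb^{\dd}(Q,W)=M^{(1,\ldots,1),\dd}(Q,W)$ is the critical locus of a regular function on a smooth quasi-projective scheme, hence is of finite type over $\mathbb{C}$, so its topological Euler characteristic is defined; and by construction it carries the algebraic $T_0$-action introduced above.

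Next I would invoke the standard principle that, for a torus $T$ acting algebraically on a variety $Y$ of finite type over $\mathbb{C}$, one has $\chi(Y)=\chi\big(Y^{T}\big)$ (as used in \cite{BF08}). One reduces to the case $T=\mathbb{C}^{*}$ by choosing a one-parameter subgroup $\lambda\colon\mathbb{C}^{*}\to T_0$ generic enough that $Y^{\lambda}=Y^{T_0}$ --- possible since the $T_0$-fixed locus is cut out by only finitely many characters, so any $\lambda$ outside a finite union of proper subtori works --- and for $T=\mathbb{C}^{*}$ the identity follows from additivity of $\chi$ over the $\mathbb{C}^{*}$-invariant decomposition $Y=Y^{\mathbb{C}^{*}}\sqcup(Y\setminus Y^{\mathbb{C}^{*}})$ together with the vanishing of $\chi$ for a fixed-point-free $\mathbb{C}^{*}$-action. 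Applied to $Y=\Hilb^{\dd}(Q,W)$ this gives
\[
\chi\big(\Hilb^{\dd}(Q,W)\big)=\chi\big(\Hilb^{\dd}(Q,W)^{T_0}\big).
\]

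Finally I would compute the right-hand side using the lemma: $\Hilb^{\dd}(Q,W)^{T_0}$ is a zero-dimensional scheme whose closed points are in bijection with the $Q$-multi-colored plane partitions of dimension vector $\dd$ (of which there are finitely many, as $|\pi|\leq|\dd|$ for any such $\overline{\pi}$), and there are $n_Q(\dd)$ of them; since the Euler characteristic of a zero-dimensional scheme is the cardinality of its underlying set, $\chi\big(\Hilb^{\dd}(Q,W)^{T_0}\big)=n_Q(\dd)$, which together with the display above yields the assertion. I expect no real obstacle here: the substantive input --- finiteness of the $T_0$-fixed locus and its identification with $Q$-multi-colored plane partitions --- is precisely the content of the preceding lemma, and the localization principle is classical.
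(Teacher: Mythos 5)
Your argument is correct and is essentially the (implicit) argument the paper relies on: the paper states the corollary without proof, taking for granted the standard torus-localization identity $\chi(Y)=\chi(Y^{T_0})$ together with the preceding lemma's bijection between $T_0$-fixed points and $Q$-multi-colored plane partitions of dimension vector $\dd$. You have simply spelled out the localization step (reduction to a generic $\mathbb{C}^*$, additivity of $\chi$, and vanishing of $\chi$ on the fixed-point-free part) and the finiteness of the fixed locus, both of which are exactly what is needed.
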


Now we are ready to finalize the proof.

\begin{proof}[Proof of Theorem~\ref{thm:DT-to-part}.]
Recall that $\Hilb^{\dd}(Q,W)$ is the critical locus of the function $\Tr(W)$ on the smooth scheme $\Hilb^{\dd}(\mathbb{C}Q)$. Since $T_0$ acts on $\mathbb{C}Q$ and $W\in\mathbb{C}Q$ is $T_0$-invariant, the torus $T_0$ acts on $\Hilb^{\dd}(\mathbb{C}Q)$ and the function $\Tr(W)$ is $T_0$-invariant. By \cite[Proposition 3.3]{BF08}, the Behrend function $\nu$ of $\Hilb^{\dd}(Q,W)$ is equal to $(-1)^{m}$ on $T_0$-fixed points, where $m$ is the dimension of $\Hilb^{\dd}(\mathbb{C}Q)$, and hence
\[
\chi_{\vir}\big(\Hilb^{\dd}(Q,W)\big)=(-1)^m\chi\big(\Hilb^{\dd}(Q,W)\big)=(-1)^m\,n_Q(\dd).
\]

The Hilbert scheme $\Hilb^{\dd}(\mathbb{C}Q)$ is the fine moduli space of framed representations of $Q$ with framing vector $(1,\ldots,1)$, that is,
\[
\Hilb^{\dd}(\mathbb{C}Q)=\Big(\prod_{(a:i\to j)\in Q_1}\Hom(\mathbb{C}^{d_i},\mathbb{C}^{d_j})\times\prod_i\mathbb{C}^{d_i}\Big)\sslash \prod_{i\in Q_0}\GL_{d_i}(\mathbb{C})
\]
which has dimension
\[
    m=\dim\Hilb^{\dd}(\mathbb{C}Q) =\sum_{a:i\to j} d_id_j + \sum_{i} d_i - \sum_{i} d_i^2 = |\dd|-\langle \dd,\dd\rangle_Q.
\]
\end{proof}

\bibliographystyle{plain}

\end{document}